\mathchardef\mhyphen="2D
\providecommand*\showkeyslabelformat[1]
\theoremstyle{definition}
\newtheorem{theorem}{Theorem}
\newtheorem{definition}[theorem]{Definition}
\newtheorem{lemma}[theorem]{Lemma}
\newtheorem{proposition}[theorem]{Proposition}
\newtheorem{example}[theorem]{Example}
\newtheorem{remark}[theorem]{Remark}
\newtheorem{remarks}[theorem]{Remarks}
\newtheorem{notation}[theorem]{Notation}
\newtheorem{conj}[theorem]{Conjecture}
\newtheorem*{con}{Conjecture}
\newtheorem*{coher+comp}{The $n$-cell composition coherence condition for $n$-categories}
\numberwithin{theorem}{section}
\def\bs{\ensuremath\boldsymbol}
\newtheorem*{rep@theorem}{\rep@title}
\newcommand{\newreptheorem}[2]{%
\newenvironment{rep#1}[1]{%
 \def\rep@title{#2 \ref{##1}}%
 \begin{rep@theorem}}%
 {\end{rep@theorem}}}
\title{Higher Categories and Slices of Globular Operads}
\author{Rhiannon Griffiths}
\begin{document}

\begin{abstract} 
In an unpublished preprint \cite{batanin}, Batanin conjectures that it is possible to take `slices' of a globular operad, thereby isolating the algebraic structure in each dimension. It was further hypothesised that the slices of a globular operad for some theory of higher category contain essential information about those higher categories, namely whether or not they are equivalent to the fully weak variety. In this paper, we use the theory of presentations for globular operads developed in \cite{Me} to provide a concrete definition of slices, and calculate the slices for several key theories of $n$-category.
\end{abstract}

\maketitle

\tableofcontents

\section{Introduction}

The biggest obstruction to working with weak $n$-categories is that their complexity increases so rapidly with $n$. Yet, it is weak higher categories that typically have the most interesting and useful applications. Grothendieck's Homotopy Hypothesis, for instance, states that homotopy $n$-types are modeled by $n$-groupoids, and by extension that topological spaces are modeled by $\infty$-groupoids. However, Simpson showed that \textit{strict} $n$-groupoids do not contain sufficient information to model spaces with nontrivial Whitehead products \cite[\S 4.4]{simpson}. For $n \leqslant 2$, weak $n$-categories and strict $n$-categories are equivalent, but the same is not true in dimension 3. Instead, things like the 3-dimensional Homotopy Hypothesis are stated in terms of Gray Categories -- a type of semi-strict 3-category for which composition of 2-cells  is strictly associative and unital but the interchange law does not hold. The proof that Gray categories are equivalent to fully weak 3-categories is extremely non-trivial \cite{NG}, and analogous proofs for higher dimensional categories would be even more so. However, in dimensions above 4, any direct work involving weak $n$-categories is impossible since there are no hands-on definitions of fully weak $n$-category for any $n > 4$ in the current literature. One way to overcome this difficulty is to find some theory of $n$-category that is as strict as it is possible to be while still being equivalent to the fully weak variety; the idea is to retain enough structure to be useful without being too complicated to work with directly. 

This work was inspired by a conjecture of Michael Batanin on the subject of globular operads. A globular operad is a kind of operad whose algebras bear a strong formal resemblance to higher categories. Significant progress has been made with this approach to $n$-categories, particularly by Batanin \cite{MB} and by Leinster \cite{TL}. In \cite{batanin}, Batanin described what he called the `slices' of a globular operad -- a series of symmetric operads capturing the internal structure of the algebras in each dimension. In particular, when $\bs{G}$ is the globular operad for some theory of $n$-category, the $k^{th}$ slice should be the symmetric operad corresponding to the algebraic structure formed by the $k$-cells of such an $n$-category whenever it contains exactly one $j$-cell, for all $j<k$. Note that this single $j$-cell must be the identity cell for all $0<j<k$. Thus, for \text{strict} higher categories, the first slice should be the operad for monoids, since the arrows of a one-object strict $n$-category form a monoid. Similarly, by the Eckmann-Hilton argument (discussed in Section \ref{pres for symmop section}), the $k^{th}$ slice for all $k>1$ should be the symmetric operad for commutative monoids. In the case of higher groupoids, slices are closely related to homotopy groups; the $k^{th}$ homotopy group of an $n$-groupoid $\mathcal{A}$ at an object $A$ is given by equipping the $k$-cells of $\mathcal{A}$ whose lower-dimensional source and target cells are all identities at $A$ with the equivalence relation\footnote{The fact that the $(k+1)$-cells are weakly invertible in a groupoid is precisely what makes this identification into an equivalence relation; see \cite{CS}.}  defined by, 
\begin{center}
$\alpha \sim \beta \iff$ there exists a $(k+1)$-cell $\alpha \longrightarrow \beta$ in $\mathcal{A}$.
\end{center}
Batanin's conjecture, stated in terms of formal objects known as computads, hypothesises that slices can be used to determine when semi-strict higher categories are equivalent to the fully weak variety:
\begin{con}\cite{batanin} Let $\bs{G}$ be a contractible $n$-globular operad.
\begin{enumerate}
\item If the $k^{th}$ slice of $\bs{G}$ is a kind of symmetric operad known as a \textit{plain} operad for all $k<n$, then the associated category of computads is a presheaf category.
\item If the category of computads associated to $\bs{G}$ is a presheaf category, then the algebras for $\bs{G}$ are equivalent to fully weak $n$-categories.
\end{enumerate}
\end{con}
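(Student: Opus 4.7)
The natural strategy is to leverage the concrete definition of slices developed in this paper together with the presentation theory for globular operads from \cite{Me}. I would first express, at the level of presentations, what plainness of the $k^{th}$ slice means in concrete terms: that no non-trivial permutation of inputs to a $k$-fold composite of identity-bordered cells is forced to act trivially by a coherence cell of $\bs{G}$. This should be recastable as a condition on the generators and relations of $\bs{G}$ in dimension $k$, which then becomes the hook for the rest of the argument.

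For part (1), the plan is to build the computad category $\mathrm{Comp}(\bs{G})$ inductively by dimension and recognise it as presheaves on a category of globular pasting diagrams for $\bs{G}$. At each dimension $k < n$, the plainness of the $k^{th}$ slice rules out the Eckmann--Hilton-type identifications that typically obstruct this recognition, because it prevents two distinct pasting shapes from collapsing after identity insertion. Concretely, one wants to show that the free-computad-on-a-pasting-diagram functor is fully faithful and dense, so that the computad category is equivalent to a presheaf category via the usual Yoneda-style nerve. The main technical work is organising the presentation inductively and verifying that no relation in dimension $k+1$ creates unexpected identifications between $k$-dimensional pasting-diagram shapes.

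For part (2), I would appeal to a comparison with the Batanin--Leinster operad $\bs{L}$ for fully weak $n$-categories, using an operad map between $\bs{G}$ and $\bs{L}$ arising from their shared contractibility. This induces an adjunction of algebra categories, and the presheaf condition on $\mathrm{Comp}(\bs{G})$ should supply exactly the lifting room needed to turn the contractibility witnesses into equivalences back and forth, upgrading the adjunction to an equivalence with respect to an appropriate notion of weak equivalence of algebras. The hard part will be this second direction: isolating the correct notion of equivalence of algebras for a contractible globular operad and verifying the required lifting properties, most likely by constructing a folk-style model structure on $\bs{G}$-algebras and comparing it with the corresponding structure for $\bs{L}$. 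Part (1) should reduce once plainness is packaged at the presentation level, whereas part (2) is where the genuine higher-categorical content lies and is, in my view, the true heart of the conjecture.
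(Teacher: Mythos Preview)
The statement you are attempting to prove is a \emph{conjecture}, not a theorem, and the paper does not contain a proof of it. The paper explicitly introduces this as Batanin's conjecture from the unpublished preprint \cite{batanin}, uses it as motivation for developing a concrete definition of slices, and then \emph{tests} the conjecture by computing slices in examples (Theorems \ref{slices_for_weak_interchange} and \ref{slices_for_weak_units}). The paper states that part (1) will be addressed in follow-up work \cite{RG2}, and treats part (2) as open. So there is no ``paper's own proof'' to compare your proposal against.

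That said, your sketch is a reasonable outline of how one might attack the conjecture, and it is broadly aligned with what the paper hints at for future work. A few concrete remarks. Your reformulation of plainness as ``no non-trivial permutation of inputs is forced to act trivially by a coherence cell'' is not quite right: plainness of the $k^{th}$ slice, in the presentation-theoretic terms of this paper, is the condition of Lemma \ref{prop - presentations for plain}, namely that the relation maps $\tilde{e}_k,\tilde{q}_k$ factor through $\{0\}\times J_k^\infty[n]$, i.e.\ no non-identity permutation appears in the induced relations. This is a syntactic condition on the presentation, and Theorem \ref{slices don't depend on presentations} makes it presentation-independent. For part (1), your density/full-faithfulness strategy for recognising the computad category as a presheaf category is the standard approach, but the actual obstruction---illustrated in the introduction by the Makkai--Zawadowski observation for strict $3$-categories---is subtler than ``two distinct pasting shapes collapsing''; it is about the failure of a consistent ordering on cells in degenerate contexts, which is exactly what the permutation data in the slice detects. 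For part (2), your appeal to a contractibility-induced operad map and a model-categorical comparison is plausible in spirit but is precisely the open part of the conjecture; the paper offers no argument here, and neither does your proposal beyond naming the expected ingredients.
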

Given any operadic theory of higher category, the associated computads may be understood as a formalisation of the string diagrams for those higher categories. Since objects in presheaf categories are diagrams in $\mathbf{Set}$, they can often be expressed geometrically; think for example of the triangles, tetrahedrons, and higher dimensional simplices forming a simplicial set. On the surface, this makes it seem as though categories of computads should all be presheaf categories, since they could all be expressed in the language of string diagrams. However, it was observed by Makkai and Zawadowski in \cite{M+Z} that the computads for strict $n$-categories are \textit{not} presheaves for any $n \geqslant 3$. Informally, this because for any 2-cells $\alpha:1_A \longrightarrow 1_A$ and $\beta:1_A \longrightarrow 1_A$ between matching identity 1-cells in a strict $n$-category, the horizontal composites $\alpha \ast \beta$ and $\beta \ast \alpha$, and the vertical composites $\alpha \cdot \beta$ and $\beta \cdot \alpha$ are all equal, since the collection of all such cells forms a commutative monoid. This means that the string diagrams below all represent the \textit{same} 2-cell. 
\vspace{1mm}
\begin{center}
\begin{tikzpicture}[node distance=5cm, auto]

\draw(0,0) rectangle (3,-2.5);

\node at (1,-1.25) {$\bullet$};
\node at (2,-1.25) {$\bullet$};

\node at (0.7,-1.25) {$\alpha$};
\node at (2.3,-1.25) {$\beta$};

\draw(4,0) rectangle (7,-2.5);

\node at (5.5,-0.8) {$\bullet$};
\node at (5.5,-1.7) {$\bullet$};

\node at (5.2,-0.8) {$\alpha$};
\node at (5.8,-1.7) {$\beta$};

\draw(8,0) rectangle (11,-2.5);

\node at (9,-1.25) {$\bullet$};
\node at (10,-1.25) {$\bullet$};

\node at (10.3,-1.25) {$\alpha$};
\node at (8.7,-1.25) {$\beta$};

\draw(12,0) rectangle (15,-2.5);

\node at (13.5,-0.8) {$\bullet$};
\node at (13.5,-1.7) {$\bullet$};

\node at (13.2,-0.8) {$\beta$};
\node at (13.8,-1.7) {$\alpha$};

\end{tikzpicture}
\end{center}
We can picture this as $\alpha$ and $\beta$ being allowed by the interchange law to move around each other in the empty space left by the identity axioms. Without giving all the details, the problem that arises here is that there is no way to place a consistent order on the 2-cells contained in string diagrams like this, and as a result, we cannot construct a presheaf category wherein the diagrams above are recognised as being equivalent. 
It is thought that string diagrams like this being equivalent is indicative of some homotopy degeneracy like the vanishing Whitehead products in the corresponding higher categories, meaning that those higher categories cannot contain sufficient information to model the fully weak variety.

Although Batanin gave an abstract definition of slices in \cite{batanin}, this definition did not provide the desired results when applied in practice. For instance, when applied to Gray categories, the resulting second slice is the symmetric operad for double monoids with shared unit \cite[Example 3.4]{batanin}. However, it is a straightforward exercise to show that the 2-cells of a Gray category with single 0-cell and a single identity 1-cell just form an ordinary monoid. At the time, it was not possible to formulate a concrete definition of slices due to the gaps in knowledge surrounding globular operads.

In this paper, we formulate a hands-on definition of slices by exploiting the theory of presentations for globular operads that we developed in \cite{Me}. While our definition is highly technical and requires building some additional machinery to state formally, it is fairly intuitive to apply in practice. This is demonstrated in the final section, where we calculate slices for two theories of semi-strict higher category; the first are $n$-categories with weak interchange laws, and the second are $n$-categories with weak identities. Many authors have speculated that one or both of these are equivalent to fully weak higher categories; see, for example, \cite{Crans1}, \cite{Crans2}, \cite{Dorn}, \cite{HeidemannReutterVicary}, \cite{ReutterVicary} for the former case, and \cite{AK}, \cite{CS} for the latter. 

\begin{reptheorem}{slices_for_weak_interchange}
For $k<n$, the $k^{th}$ slice of the $n$-globular operad for $n$-categories with weak interchange laws is the \textit{plain} operad for (many-pointed) $k$-tuple monoids with shared unit.
\end{reptheorem}

\begin{reptheorem}{slices_for_weak_units}
For $k<n$, the $k^{th}$ slice of the $n$-globular operad for $n$-categories with weak units is the \textit{non-plain} operad for (many-pointed) $k$-tuple semigroups, satisfying some additional axioms.
\end{reptheorem}

By Theorem \ref{slices_for_weak_interchange}, Batanin's conjecture says that $n$-categories with weak interchange laws do indeed model weak $n$-categories. On the other hand, Theorem \ref{slices_for_weak_units} shows that the slices of the globular operad for higher categories with weak identities are \textit{not} plain operads. To illustrate how the second part of Batanin's conjecture may still be relevant in this case, we show that the slices for higher categories with weak identity laws have a nice geometric representation that is strongly related to the corresponding string diagrams, suggesting that these diagrams do form presheaf categories. The string diagrams associated to categories with weak identity laws will be studied in more detail in future work \cite{RG3}. 

In \cite{RG}, Garner defines weak functors between operadic higher categories in terms of computads. In more follow up work \cite{RG2}, we use this approach to construct a weak homomorphism into any semi-strict higher category from a fully weak one, and investigate the relationship between semi-strict theories of higher category whose associated computads form presheaf categories, and fully weak higher categories. In particular, we consider Batanin's claim that this is precisely the condition needed to guarantee that a given variety of higher category contains sufficient information to model fully weak higher categories. We also use the definition of slices in this paper to show that if the slices of a globular operad for $n$-categories are plain operads for all $k<n$, then the arising computads are presheaves.

An important feature of operadic higher categories is that they are fully algebraic, meaning that composition and its associated coherence are given by specified operations satisfying equational axioms. Algebraic higher categories tend to have the most powerful applications to algebraic topology and homotopy theory. 
The best solution to the Homotopy Hypothesis, for example, would be to show that \textit{algebraic} $n$-groupoids are equivalent to homotopy $n$-types, since this would show that some completely algebraic structures are the same as purely topological ones. Variations on the Homotopy Hypothesis are still being actively researched. In an ongoing series of papers, Gurski, Johnson and Osorno have made significant progress towards proving the stable Homotopy Hypothesis in low dimensions; see \cite{jo} and \cite{gjo}. As another example, we have the Cobordism Hypothesis of Baez and Dolan \cite{baez and dolan}, which concerns the classification of extended topological quantum field theories (TQFTs). 

\subsection{Outline of the paper} In Section \ref{T-ops and algebras subsection}, we introduce $T$-operads and their algebras, with particular focus on plain operads. Section \ref{free T-operad section} introduces free $T$-operads, which will be of great use in later sections. We recall the more complex notion of globular operad in Section \ref{GlobOpSubSection}, and highlight their similarity to algebraic theories of higher category. Presentations for globular operads are introduced in Section \ref{presSubsec}, allowing us to build the operads for the higher categories we are interested in. We discuss how plain operads may be generalised to symmetric operads in Section \ref{symm vs plain op section}, and show how symmetric operads can be constructed using the generators and relations of a presentation in Section \ref{pres for symmop section}. In Section \ref{slices subsection}, we use presentations to formulate a concrete definition of slices for globular operads, and prove that the slices do not depend on the choice of presentation. In Section \ref{examples of slices subsection} we calculate the slices for several key theories of higher category.

\section{$T$-operads}\label{T-Op Section}

The operads in this paper are all either $T$-operads or symmetric operads. The latter of these may be viewed as a generalisation of the prototypal example of $T$-operads, known as \textit{plain} operads, detailed in Example \ref{plain op}. 

\subsection{$T$-operads and their algebras}\label{T-ops and algebras subsection}

In this section we provide some background on $T$-operads, which are defined for any cartesian monad $T$ on a finitely complete category $\mathcal{S}$. 

\begin{definition} Let $\mathcal{S}$ be a category with a terminal object $1$ and let $T : \mathcal{S} \longrightarrow \mathcal{S}$ be an endofunctor. The \textit{category $T \mhyphen \mathbf{Coll}$ of $T$-collections} is the slice category $\mathcal{S} / T1$. 
\end{definition}

Recall that a category is finitely complete if and only if it has pullbacks and a terminal object \cite[5.23]{SA}. With this in mind we give the following definition.

\begin{definition}\label{circ functor} Let $T=(T,\eta,\mu)$ be a monad on a finitely complete category $\mathcal{S}$. The functor $\circ:T \mhyphen \mathbf{Coll} \times T \mhyphen \mathbf{Coll} \longrightarrow T \mhyphen \mathbf{Coll}$ sends a pair $(c:C \longrightarrow T1, \ d:D \longrightarrow T1)$ of $T$-collections to the composite of the left hand diagonals in the diagram
\begin{center}
\begin{tikzpicture}[node distance=1.4cm, auto]

\node (A) {$C \circ D$};
\node (TA) [left of=A, below of=A] {$TC$};
\node (X) [left of=TA, below of=TA] {$T^21$};
\node (Y) [left of=X, below of=X] {$T1$};
\node (B) [right of=A, below of=A] {$D$};
\node (C) [node distance=2.8cm, below of=A] {$T1$};

\draw[->] (A) to node [swap] {} (TA);
\draw[->] (A) to node {} (B);
\draw[->] (TA) to node [swap] {$T!$} (C);
\draw[->] (B) to node {$d$} (C);
\draw[->] (TA) to node [swap] {$Tc$} (X);
\draw[->] (X) to node [swap] {$\mu_1$} (Y);

\end{tikzpicture}
\end{center}
where the upper square is a pullback square, and is defined on pairs of morphisms using the universal property of pullbacks. 
\end{definition}

\begin{definition}
A monad $T=(T,\eta, \mu)$ on a category $\mathcal{S}$ is \textit{cartesian} if
\begin{enumerate}[i)]
\item $\mathcal{S}$ has pullbacks;
\item $T$ preserves pullbacks; and
\item $\eta$ and $\mu$ are cartesian, i.e., the naturality squares of $\eta$ and $\mu$ are all pullbacks.
\end{enumerate}
\end{definition}

When $T=(T,\eta, \mu)$ is a cartesian monad on a category with a terminal object $1$, $(T \mhyphen \bf{Coll}, \circ)$ is a monoidal category with unit $\eta_1:1 \longrightarrow T1$. The construction of the left and right unit isomorphisms is straightforward, and uses the fact that $\eta$ is cartesian. The construction of the associativity isomorphisms is slighty more complex, and uses that $T$ preserves pullbacks and that $\mu$ is cartesian.

\begin{definition}\label{T-operad def} Let $T$ be a cartesian monad on a finitely complete category $\mathcal{S}$. The \textit{category $T \mhyphen \mathbf{Op}$ of $T$-operads} is the category of monoids in the monoidal category $(T \mhyphen \bf{Coll}, \circ)$.
\end{definition}

Unpacking the definition above, we see that a $T$-operad $C = (C,c,ids,comp)$ is a morphism $c:C \longrightarrow T1$ in $\mathcal{S}$ together with an identity map $ids:1 \longrightarrow C$ and a composition map $comp:C \circ C \longrightarrow C$ making the following diagrams commute
\begin{center}
\begin{tikzpicture}[node distance=2.8cm, auto]

\node (X) {$1$};
\node (Y) [right of=X] {$C$};
\node (A) [node distance=1.4cm, below of=X, right of=X] {$T1$};

\node (U) [node distance=2cm, right of=Y] {$C \circ C$};
\node (B) [node distance=1.4cm, below of=U, right of=U] {$TC$};
\node (C) [node distance=2.2cm, right of=B] {$T^21$};
\node (D) [node distance=2.2cm, right of=C] {$T1$};
\node (V) [node distance=1.4cm, above of=D, right of=D] {$C$};

\draw[->] (X) to node {$ids$} (Y);
\draw[->] (X) to node [swap] {$\eta_{1}$} (A);
\draw[->] (Y) to node {$c$} (A);

\draw[->] (U) to node {$comp$} (V);
\draw[->] (U) to node [swap] {} (B);
\draw[->] (B) to node [swap] {$Tc$} (C);
\draw[->] (C) to node [swap] {$\mu_{1}$} (D);
\draw[->] (V) to node {$c$} (D);

\end{tikzpicture}
\end{center}
and satisfying the associativity and identity axioms. Similarly, a morphism of $T$-operads is a morphism of the underlying collections preserving the identity and composition.

We are interested in the algebras for a $T$-operad, which may be defined as the algebras of the following associated monad.

\begin{definition} \label{operad-monad} Let $T$ be a cartesian monad on a complete category $\mathcal{S}$. A $T$-operad $C = (C, c, ids, comp)$ induces a monad $(-)_C$ on $\mathcal{S}$. The underlying endofunctor sends an object $X \in \mathcal{S}$ to the pullback object
\begin{center}
\begin{tikzpicture}[node distance=1.4cm, auto]

\node (X) {$X_C$};
\node (TX) [left of=X, below of=X] {$TX$};
\node (C) [right of=X, below of=X] {$C$};

\draw[->] (X) to node [swap] {} (TX);
\draw[->] (X) to node {} (C);

\node (TC') [node distance=2.8cm, below of=X] {$T1$};

\draw[->] (TX) to node [swap] {$T!$} (TC');
\draw[->] (C) to node {$c$} (TC');

\end{tikzpicture}
\end{center}
and is defined on morphisms using the universal property of pullbacks. The same property is used to construct the unit $X \longrightarrow X_C$ and multiplication $(X_C)_C \longrightarrow X_C$ of the monad at $X$. The monad axioms are satisfied by the identity and associativity axioms for $C$. 
\end{definition}

\begin{definition}\label{operad-algebras}
Let $C$ be a $T$-operad. The \textit{category $C \mhyphen \bf{Alg}$ of algebras for $C$}, or $C$-algebras, is the category of algebras for the monad $(-)_C$.
\end{definition}

As a first example, we cover a variety of $T$-operad that will be used extensively throughout this paper; namely $T$-operads for which $T$ is the free monoid monad on the category $\mathbf{Set}$ of sets. Before proceeding, let us recall some definitions from universal algebra.

An \textit{algebraic theory}\label{reg/fin alg theories} on $\mathbf{Set}$ is a collection of abstract operations satisfying equational axioms. The \textit{arity} of an operation is the number of inputs of the operation. An algebraic theory is called \textit{finitary} if all of its operations have finite arity. The theory of groups, for example, is a finitary algebraic theory with an operation of arity 2 (a binary operation), an operation of arity 1 (a unary operation) and an operation of arity 0 (a nullary operation). These operations give group multiplication, inverses, and the identity element, respectively, and the equational axioms they satisfy encode the group axioms. A \textit{model} of an algebraic theory is a set equipped with operations corresponding to those of the theory, and which satisfy the same equational axioms. Thus, a model for the theory of groups on a set $X$ is a precisely a group with underlying set $X$. 
A more detailed overview can be found in texts such as \cite[\S II.1]{SB+HPS}, although the terminology differs slightly.

An equation made up of variables and finitary operation symbols is called \textit{regular} if the same variables appear without repetition on each side. So the equations
$$x \cdot (y \cdot z) = (x \cdot y) \cdot z  \qquad \quad x \cdot 1 = x = 1 \cdot x \qquad \quad x \cdot y =y \cdot x$$
are all regular\footnote{Note that the `1' here does not denote a variable, but an element specified by a nullary operation.}, but the equations
$$x \cdot x^{-1} = 1 \qquad \quad x \cdot (y + z) = (x \cdot y) + (x \cdot z)$$
are not\footnote{Universal algebraists use the term `regular equation' for an equation in which the same variables appear on each side, with repetitions allowed; see, for example, \cite{BJ+EN}}. An equation made up of variables and finitary operation symbols is called \textit{strongly} regular if the same variables appear \textit{in the same order}, without repetition on each side. So of the three regular equations above, only the equations 
$$x \cdot (y \cdot z) = (x \cdot y) \cdot z  \qquad \quad x \cdot 1 = x = 1 \cdot x$$
are strongly regular; see \cite[\S 3]{AC+PJ} or \cite[2.2.5]{TL}. An algebraic theory is called regular if it can be presented by operations and regular equations, and strongly regular if it can be presented by operations and strongly regular equations. From the examples here we can intuit that neither the theory of groups nor the theory of rings is regular, but the theory of commutative monoids is regular, and the theory of monoids is strongly regular.

\begin{example}\label{plain op}
Plain operads, also called non-symmetric operads, are $T$-operads for which $T$ is the free monoid monad on $\mathbf{Set}$; see \cite[1.4(ii)]{TL1} for a proof that this monad is cartesian. Each plain operad is equivalent to a strongly regular finitary algebraic theory on $\mathbf{Set}$ \cite[C.1.1]{TL}, and the algebras are equivalent to models of that theory. To see this, observe that in this context $T1$ is isomorphic to the set $\mathbb{N}$ of natural numbers, so the underlying collection of a plain operad $C = (C,c,ids,comp)$ is a function $c:C \longrightarrow \mathbb{N}$. For each $m \in \mathbb{N}$, we think of the set $c^{-1}(m)$ as a set of abstract operations of arity $m$. The identity map $ids:1 \longrightarrow C$ picks out a unary operation which we denote by $\text{id}$ and refer to as the identity. The elements of $C \circ C$ are tuples $(\rho_1,...,\rho_m, \, \rho)$ where $\rho, \rho_i \in C$ and $c(\rho) = m$, so $\rho$ has arity $m$. The composition map $comp:C \circ C \longrightarrow C$ takes an element $(\rho_1,...,\rho_m, \, \rho) \in C \circ C$ to an element $\rho \circ (\rho_1,...,\rho_m) \in C$ satisfying 
$$c \, (\rho \circ (\rho_1,...,\rho_m)) = c(\rho_1) + ... + c(\rho_m).$$
We think of $\rho \circ (\rho_1,...,\rho_m)$ as the abstract operation of arity $c(\rho_1) + ... + c(\rho_m)$ given by first using the $\rho_i$s and then applying $\rho$ to the result. 

We visualise these operations using tree diagrams. In particular, an operation $\rho$ of arity $m$ is represented by drawing $m$ strings entering $\rho$ from above, and a single string exiting $\rho$ from below. So when $m = 3$ we have,
\begin{center}
\begin{tikzpicture}[node distance=5cm, auto]

\node[circle,draw] (A) at (0,0) {$\rho$};

\draw(A) to (0,-1);
\draw(A) to (1,1);
\draw(A) to (0,1);
\draw(A) to (-1,1);

\end{tikzpicture}
\end{center}
and given operations $\rho_1$, $\rho_2$ and $\rho_3$ of arities 4, 0, and 2, respectively, their composite $\rho \circ (\rho_1, \rho_2, \rho_3)$ is an operation of arity $4+0+2=6$ illustrated by the tree
\vspace{1mm}
\begin{center}
\begin{tikzpicture}[node distance=5cm, auto]

\node[circle,draw] (A) at (0,0) {$\rho$};
\node[circle,draw, scale=0.9] (X) at (-1,1.4) {$\rho_1$};
\node[circle,draw, scale=0.9] (Y) at (0,1.4) {$\rho_2$};
\node[circle,draw, scale=0.9] (Z) at (1,1.4) {$\rho_3$};

\draw(A) to (0,-1);
\draw(A) to (X);
\draw(A) to (Y);
\draw(A) to (Z);

\draw(X) to (-1.75,2.4);
\draw(X) to (-1.25,2.4);
\draw(X) to (-0.75,2.4);
\draw(X) to (-0.25,2.4);

\draw(Z) to (0.75,2.4);
\draw(Z) to (1.25,2.4);

\end{tikzpicture}
\end{center}
of height 2. The operad axioms, expressed by the equalities below, ensure that composition in $C$ is appropriately associative and unital, 
$$\rho \circ \big(\rho_1 \circ (\rho_{11},...,\rho_{1k_1}),...,\rho_m \circ (\rho_{m1},...,\rho_{mk_m})\big) = \big(\rho \circ (\rho_1,...,\rho_m)\big) \circ (\rho_{11},...,\rho_{1k_1},...,\rho_{n1},...,\rho_{mk_m}) $$
$$ \rho \circ (\text{id},...,\text{id}) = \rho = \text{id} \circ (\rho) .$$
The associativity axiom means that we can draw tree diagrams of arbitrary height without ambiguity. For example, the tree
\begin{center}
\begin{tikzpicture}[node distance=5cm, auto]

\node[circle,draw] (A) at (0,0) {$\rho$};
\node[circle,draw, scale=0.9] (X) at (-1.5,1.5) {$\rho_1$};
\node[circle,draw, scale=0.9] (Y) at (0,1.5) {$\rho_2$};
\node[circle,draw, scale=0.9] (Z) at (1.5,1.5) {$\rho_3$};

\draw(A) to (0,-1);
\draw(A) to (X);
\draw(A) to (Y);
\draw(A) to (Z);

\node[circle,draw, scale=0.8] (Q) at (-2.7,3) {$\rho_{11}$};
\node[circle,draw, scale=0.8] (W) at (-1.9,3) {$\rho_{12}$};
\node[circle,draw, scale=0.8] (E) at (-1.1,3) {$\rho_{13}$};
\node[circle,draw, scale=0.8] (R) at (-0.3,3) {$\rho_{14}$};

\draw(X) to (Q);
\draw(X) to (W);
\draw(X) to (E);
\draw(X) to (R);

\node[circle,draw, scale=0.8] (T) at (1,3) {$\rho_{31}$};
\node[circle,draw, scale=0.8] (U) at (2,3) {$\rho_{32}$};

\draw(Z) to (T);
\draw(Z) to (U);

\draw(Q) to (-2.4,4);
\draw(Q) to (-2.7,4);
\draw(Q) to (-3,4);

\draw(W) to (-1.9,4);

\draw(E) to (-0.65,4);
\draw(E) to (-0.95,4);
\draw(E) to (-1.25,4);
\draw(E) to (-1.55,4);

\draw(R) to (-0.5,4);
\draw(R) to (-0.1,4);

\draw(T) to (0.8,4);
\draw(T) to (1.2,4);

\draw(U) to (1.8,4);
\draw(U) to (2.2,4);

\end{tikzpicture}
\end{center}
of height 3 represents a unique element of $C$, since the two ways of composing this diagram are equal. 
If we represent the identity element $\text{id} \in C$ as the tree diagram of height 0, i.e., as a single undecorated vertical line, then the identity axiom gives us intuitive equalities of diagrams like the one below.
\vspace{1mm}
\begin{center}
\begin{tikzpicture}[node distance=5cm, auto]

\node[circle,draw] (A) at (0,0) {$\rho$};

\draw(A) to (0,-1);
\draw(A) to (1,0.9);
\draw(A) to (0,0.9);
\draw(A) to (-1,0.9);

\draw(1,0.9) to (1,1.6);
\draw(0,0.9) to (0,1.6);
\draw(-1,0.9) to (-1,1.6);

\node[circle,draw] (B) at (3,0.3) {$\rho$};

\draw(B) to (3,-0.7);
\draw(B) to (4,1.3);
\draw(B) to (3,1.3);
\draw(B) to (2,1.3);

\node[circle,draw] (C) at (6,0.6) {$\rho$};

\draw(C) to (6,-1);
\draw(C) to (7,1.6);
\draw(C) to (6,1.6);
\draw(C) to (5,1.6);

\node(X) at (1.5,0.3) {$=$};
\node(Y) at (4.5,0.3) {$=$};

\end{tikzpicture}
\end{center}

An algebra for a plain operad $C$ on a set $X$ is a function $\theta:X_C \longrightarrow X$ satisfying unit and multiplication axioms. The elements of $X_C$ are tuples $(x_1,...,x_m, \, \rho)$ where $c(\rho) = m$ and $x_i \in X$ for all $i$. We think of  
$$ \rho \, (x_1,...,x_m) := \theta(x_1,...,x_m, \, \rho)  \in X$$ 
as the result of applying the operation $\rho$ to the ordered list $(x_1,...,x_m)$ of elements of $X$, which can be represented by decorating the tree diagram for $\rho$,
\begin{center}
\begin{tikzpicture}[node distance=5cm, auto]

\node[circle,draw] (A) at (0,0) {$\rho$};

\node (a) at (-1,1.15) {$x_1$};
\node (b) at (0,1.15) {$x_2$};
\node (c) at (1,1.15) {$x_3$};

\node (x) at (0,-1.2) {$\rho(x_1,x_2,x_3)$.};

\draw(A) to (0,-1);
\draw(A) to (c);
\draw(A) to (b);
\draw(A) to (a);

\end{tikzpicture}
\end{center}
The unit axiom says that $\text{id}$ acts as the identity operation, so for any $x \in X$ we have $ \text{id}(x) = x$.
The multiplication axiom, concisely expressed by the equation 
$$ \big( \rho \circ (\rho_1,...,\rho_m) \big) (x_1,...,x_{k_1 + ... + k_m}) = \rho \, \big(\rho_1(x_1,...,x_{k_1}),...,\rho_m(x_{k_1 + ... + k_{m-1} +1},...,x_{k_1 + ... + k_m}) \big),$$
tells us that our operations behave in a coherent way with respect to composition.
Moreover, a morphism of algebras for $C$ is a morphism $f:X \longrightarrow Y$ of the underlying sets preserving each of the operations, that is, for any $(x_1,...,x_m, \rho) \in X_C$ we have
$f ( \rho \, (x_1,...,x_m) ) = \rho \, ( f(x_1),...,f(x_m)).$

For a concrete example, let us construct the plain operad for monoids. Given an ordered list of $m$ elements in a monoid there is exactly one way to multiply them. The corresponding plain operad should therefore have underlying collection $1_{\mathbb{N}}:\mathbb{N} \longrightarrow \mathbb{N}$, since this means that it contains exactly one operation of arity $m$ for each $m \in \mathbb{N}$. This collection has a unique operad structure, and it is easy to check that an algebra for this operad on a set $X$ is precisely a monoid with underlying set $X$, and that a morphism of algebras is precisely a monoid morphism.
\end{example} 

\begin{lemma}\label{initial-terminal}
Let  $T = (T, \eta, \mu)$ be a cartesian monad on a complete category $\mathcal{S}$. 
\begin{enumerate}[i)]
\item The terminal $T$-operad is given by equipping the $T$-collection $1_{T1}:T1 \longrightarrow T1$ with its unique operad structure. The monad induced on $\mathcal{S}$ is isomorphic to $T$, so the associated category of algebras is isomorphic to the category $T \mhyphen \mathbf{Alg}$ of algebras for $T$.
\item Similarly, the initial $T$-operad is given by equipping the $T$-collection $\eta_1:1 \longrightarrow T1$ with its unique operad structure. The monad induced on $\mathcal{S}$ is isomorphic to the identity monad, so the associated category of algebras is isomorphic to $\mathcal{S}$.
\end{enumerate}
\end{lemma}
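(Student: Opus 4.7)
The plan is to treat (i) and (ii) in parallel, since both parts proceed by the same four-step strategy: identify the specified $T$-collection, show it admits a unique operad structure, verify the required universal property in $T \mhyphen \mathbf{Op}$, and finally identify the monad induced by Definition \ref{operad-monad}.

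For (i), I first note that $1_{T1} \colon T1 \to T1$ is terminal in $T \mhyphen \mathbf{Coll}$, since for any $c \colon C \to T1$ the morphism $c$ itself is the unique collection morphism into $(T1, 1_{T1})$. Computing $T1 \circ T1$ via the pullback in Definition \ref{circ functor}, the right leg is $1_{T1}$, so the pullback collapses to $T^2 1$ with second projection $\mu_1$; the only possible operad structure is therefore $comp = \mu_1$, $ids = \eta_1$, and the operad axioms reduce to the associativity and unit axioms of the monad $T$. To see this operad is terminal in $T \mhyphen \mathbf{Op}$, I check that for any operad $(C, c, ids, comp)$ the collection morphism $c \colon C \to T1$ is automatically an operad morphism: the two commuting squares in the unpacked definition of a $T$-operad are exactly the preservation conditions for identity and composition relative to $(T1, \eta_1, \mu_1)$. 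For the induced monad, the pullback defining $X_{T1}$ again has one leg equal to the identity $1_{T1}$, so it collapses to $TX$ with the other projection being $T!$; tracing the unit $X \to X_{T1}$ and multiplication $(X_{T1})_{T1} \to X_{T1}$ through the universal property of pullbacks then shows they agree with $\eta_X$ and $\mu_X$, giving an isomorphism of monads and hence of algebra categories.

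For (ii), the key computation is that $1 \circ 1$ is the pullback of $\mu_1 \circ T\eta_1 \colon T1 \to T1$ along $\eta_1 \colon 1 \to T1$. The right unit axiom for the monad $T$ gives $\mu_1 \circ T\eta_1 = 1_{T1}$, so the pullback is just $1$ and both $comp$ and $ids$ are forced to be $1_1$, with the operad axioms holding trivially. Initiality in $T \mhyphen \mathbf{Op}$ then follows from the observation that any collection morphism $f \colon 1 \to C$ already satisfies $c \circ f = \eta_1$, and preservation of identity under any purported operad morphism forces $f = ids$. For the induced monad, $X_1$ is defined as the pullback of $T! \colon TX \to T1$ along $\eta_1 \colon 1 \to T1$; this is precisely the naturality square of $\eta$ at $! \colon X \to 1$, which is a pullback because $\eta$ is cartesian. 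Hence $X_1 \cong X$ with the map to $TX$ being $\eta_X$, and the induced unit and multiplication are forced to be identities.

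The main obstacle is the final monad identification in each part: the unit and multiplication of $(-)_C$ are specified only via universal properties of pullbacks, so showing agreement with $(\eta,\mu)$ in (i) and with the identity transformations in (ii) requires careful bookkeeping with pullback projections. In (ii) this crucially uses that $\eta$ is cartesian, which is precisely what turns the defining pullback into an isomorphism $X_1 \cong X$; in (i) the identifications go through more smoothly because one leg of every relevant pullback is already an identity map.
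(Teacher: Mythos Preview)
Your argument is correct. The paper states this lemma without proof, so there is nothing to compare against; your four-step strategy (identify the collection, verify the unique operad structure, check the universal property, and identify the induced monad) is exactly the natural way to fill in the details, and you carry it out cleanly.

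One small presentational remark on part (ii): when you write that $1 \circ 1$ is the pullback of $\mu_1 \circ T\eta_1$ along $\eta_1$, the map actually appearing as the left leg of the pullback square in Definition~\ref{circ functor} is $T! \colon T1 \to T1$, which is already the identity because $! \colon 1 \to 1$ is. The composite $\mu_1 \circ T\eta_1$ is the induced \emph{collection} map on $1 \circ 1$, not the leg of the pullback. Of course your observation that $\mu_1 \circ T\eta_1 = 1_{T1}$ is still needed, since it confirms that the collection structure on $1 \circ 1$ agrees with $\eta_1$; so nothing is lost, but you may want to separate the two roles for clarity. Your use of the cartesianness of $\eta$ to identify $X_1 \cong X$ is exactly the right point, and is the only place where the cartesian hypothesis is genuinely needed for part (ii).
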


\begin{remark} As we will see in Section \ref{symmop section}, plain operads are a special case of symmetric operads, which are equivalent to (not necessarily strongly) regular finitary algebraic theories. In general, symmetric operads cannot be expressed as $T$-operads.
\end{remark}

Just as plain operads describe strongly regular finitary algebraic theories on $\mathbf{Set}$, $T$-operads over a category $\mathcal{S}$ describe certain kinds of algebraic theories on $\mathcal{S}$. Given a $T$-operad with underlying collection $c:C \longrightarrow T1$, we think of $C$ as the \textit{object of abstract operations}, with arities specified by $c$. These arities are not necessarily natural numbers, they could instead be shapes or spaces, or any number of other mathematical structures, depending on the choice of $T$. In Section \ref{GlobOp Section} we cover globular operads, where the arities are given by shapes of pasting diagrams in globular sets.

\subsection{Free $T$-operads}\label{free T-operad section}

In this section we construct a left adjoint to the canonical forgetful functor $T \mhyphen \mathbf{Op} \longrightarrow T \mhyphen \mathbf{Coll}$ for cartesian monads $T$ satisfying some extra conditions.  
The following theorem appears in greater generality in \cite{TL} as Theorem 6.5.2, but since we use some details of the proof elsewhere, we provide an outline here.

Recall that a monad is \textit{finitary} if the underlying endofunctor preserves filtered colimits.

\begin{theorem}\label{T-op adjunction}
Let $T=(T, \mu, \eta)$ be a finitary cartesian monad on a presheaf category $\mathcal{S}$. There exists a free $T$-operad functor $T \mhyphen \mathbf{Coll} \longrightarrow T \mhyphen \mathbf{Op}$ that is left adjoint to the canonical forgetful functor.
\end{theorem}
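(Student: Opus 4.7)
The plan is to construct the free $T$-operad on a collection $X$ by the standard free-monoid-in-a-monoidal-category procedure in $(T \mhyphen \mathbf{Coll}, \circ)$, realising its underlying collection as the $\omega$-colimit of a sequence of ``depth $\leqslant n$'' approximations. Morally, the result will be $\coprod_{n \geqslant 0} X^{\circ n}$, with identity given by the inclusion of $X^{\circ 0} = I$ (where $I = \eta_1 : 1 \longrightarrow T1$ is the monoidal unit) and multiplication given by concatenation; the left adjoint on morphisms is then forced by the universal property.

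\smallskip

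The first step is to secure the colimits we need. Since $\mathcal{S}$ is a presheaf category it is cocomplete and locally cartesian closed, so $T \mhyphen \mathbf{Coll} = \mathcal{S}/T1$ is cocomplete and pullback along any morphism preserves all colimits. Reading off from the formula $C \circ D = TC \times_{T1} D$, this gives (i) $C \circ -$ preserves all colimits for each $C$ (it is pullback along $\mu_1 \circ Tc$), and (ii) $- \circ D$ preserves filtered colimits (composing the finitary functor $T$ with a colimit-preserving pullback). In particular, the endofunctor
$$\Phi_X : T \mhyphen \mathbf{Coll} \longrightarrow T \mhyphen \mathbf{Coll}, \qquad \Phi_X(Y) = I \sqcup (X \circ Y),$$
preserves filtered colimits. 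Ad\'amek's theorem then produces an initial $\Phi_X$-algebra $F(X) = \mathrm{colim}_n \, \Phi_X^n(\emptyset)$. Unwinding the iteration, with $X \circ \emptyset \cong \emptyset$ and $X \circ -$ preserving coproducts by (i), gives $\Phi_X^n(\emptyset) \cong \bigsqcup_{k < n} X^{\circ k}$, confirming the heuristic formula.

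\smallskip

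The initial-algebra structure map of $F(X)$ supplies both the identity $I \hookrightarrow F(X)$ and a left action $X \circ F(X) \longrightarrow F(X)$. I would extend this action to a multiplication $F(X) \circ F(X) \longrightarrow F(X)$ by inductively extending along the filtration $\Phi_X^n(\emptyset) \hookrightarrow F(X)$: at stage $n+1$ the source decomposes using (i) as $F(X) \sqcup \big( X \circ (\Phi_X^n(\emptyset) \circ F(X)) \big)$, to which one applies the induction hypothesis and the left $X$-action; preservation of filtered colimits by $- \circ F(X)$ ensures these stagewise maps assemble into a single morphism out of the colimit. The universal property is then verified by a parallel induction: any collection morphism $f : X \longrightarrow C$ into a $T$-operad $(C, ids, comp)$ extends uniquely to an operad morphism $\bar f : F(X) \longrightarrow C$ by sending $I$ via $ids$ and $X \circ \Phi_X^n(\emptyset)$ via $comp \circ (f \circ \bar f_n)$.

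\smallskip

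The main technical hurdle I anticipate is the verification of the associativity and unit axioms for the multiplication on $F(X)$. These must be checked coherently across the filtration, which requires careful use of the associator of $(T \mhyphen \mathbf{Coll}, \circ)$ together with the fact that the $\omega$-colimit defining $F(X)$ commutes with $\circ$ on either side. Essentially, the cartesian and finitary hypotheses on $T$ are used precisely to let us push the associator and the pullback coherences through this colimit.
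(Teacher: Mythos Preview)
There is a genuine gap: your endofunctor $\Phi_X(Y) = I \sqcup (X \circ Y)$ is on the wrong side of the (highly asymmetric) substitution product, and its initial algebra is \emph{not} the free $T$-operad. With the convention $C \circ D = TC \times_{T1} D$ that you quote, an element of $X \circ Y$ has its root in $Y$ and its immediate children all in $X$. Iterating from $\emptyset$ therefore produces only \emph{balanced} trees: $\Phi_X^n(\emptyset) \cong \bigsqcup_{k<n} X^{\circ k}$ consists of height-$k$ trees every one of whose leaves is a generator. The unbalanced composites are missing. Concretely, for the plain collection $X=\{b\}$ with $b$ binary, the arity-$3$ element $b\circ(b,\mathrm{id})$ of the free operad lies in no $X^{\circ k}$, so $\coprod_k X^{\circ k}$ is strictly smaller than the free operad. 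Equivalently, the free operad $O$ does \emph{not} satisfy $O \cong I \sqcup (X\circ O)$, because $X\circ O$ only contains trees with a generator at every leaf. The moral ``$\coprod_n X^{\circ n}$'' heuristic fails here precisely because $-\circ D$ does not preserve finite coproducts (only filtered colimits), so the usual free-monoid formula is unavailable.

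The paper's construction instead iterates $\Psi_C(Y)=I + (Y\circ C)$, i.e.\ $C^{(k+1)} = 1 + (C^{(k)}\circ C)$: here the root is a generator and the children are height-$\leqslant k$ trees, so one obtains \emph{all} trees of height $\leqslant k+1$, balanced or not. The filtered colimit is the free operad, and the composition $C^{(k)}\circ C^{(m)} \to C^{(k+m)}$ is built by induction on $m$ using the fact that $C^{(k)}\circ -$ (the \emph{left} factor held fixed) preserves coproducts. Your inductive step for the multiplication also goes wrong for the same reason: the decomposition $\Phi_X^{n+1}(\emptyset)\circ F(X) \cong F(X)\sqcup\big(X\circ(\Phi_X^n(\emptyset)\circ F(X))\big)$ would require $-\circ F(X)$ to preserve the coproduct $I\sqcup(X\circ\Phi_X^n(\emptyset))$, which it does not. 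If you swap to $\Psi_X$ and run the induction on the right-hand factor using property (i), your outline becomes essentially the paper's argument.
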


\begin{proof} Let $c:C \longrightarrow T1$ be a $T$-collection. For each $k \in \mathbb{N}$, we construct a new $T$-collection inductively by defining $c_{k+1} : C^{{(k+1)}} \longrightarrow T1$ to be the collection $\langle \eta_1, q_k \rangle : 1 + (C^{(k)} \circ C) \longrightarrow T1$, where $q_k$ is the composite of the left hand diagonals in the diagram 
\begin{center}
\begin{tikzpicture}[node distance=1.4cm, auto]

\node (A) {$C^{(k)} \circ C$};
\node (TA) [left of=A, below of=A] {$TC^{(k)}$};
\node (X) [left of=TA, below of=TA] {$T^21$};
\node (Y) [left of=X, below of=X] {$T1$};
\node (B) [right of=A, below of=A] {$C$};
\node (C) [node distance=2.8cm, below of=A] {$T1$};

\draw[->] (A) to node [swap] {} (TA);
\draw[->] (A) to node {} (B);
\draw[->] (TA) to node [swap] {$T!$} (C);
\draw[->] (B) to node {$c$} (C);
\draw[->] (TA) to node [swap] {$Tc_k$} (X);
\draw[->] (X) to node [swap] {$\mu$} (Y);

\end{tikzpicture}
\end{center}
and $c_0 : C^{(0)} \longrightarrow T1$ is the unit component $\eta_1 : 1 \longrightarrow T1$.
Next, let $i_0:C^{(0)} = 1 \longrightarrow 1 + C \cong 1 + (1 \circ C) = C^{(1)}$ be the coprojection, and for each $k > 0$ define a morphism $i_k:C^{(k)} \longrightarrow C^{(k+1)}$ by
\begin{center}
\begin{tikzpicture}[node distance=6.8cm, auto]

\node (X) {$1 + (C^{(k)} \circ C) = C^{(k+1)}.$};
\node (A) [left of=X] {$C^{(k)} = 1 + (C^{(k-1)} \circ C)$};

\draw[->] (A) to node {$1 + (i_{k-1} \circ 1)$} (X);

\end{tikzpicture}
\end{center}
Let $C^{\infty}$ be the colimit of the sequential diagram $C^{(0)} \longrightarrow C^{(1)} \longrightarrow C^{(2)} \longrightarrow \hdots$ of $i_k$s. It can be shown by induction on $k$ that the triangle
\begin{center}
\begin{tikzpicture}[node distance=1.4cm, auto]

\node (A) {$C^{(k)}$};
\node (B) [right of=A, below of=A] {$T1$};
\node (C) [right of=B, above of=B] {$C^{(k+1)}$};

\draw[->] (A) to node [swap] {$c_k$} (B);
\draw[->] (A) to node {$i_k$} (C);
\draw[->] (C) to node {$c_{k+1}$} (B);

\end{tikzpicture}
\end{center}
commutes for each $k \in \mathbb{N}$, so there is an induced $T$-collection $c_{\infty}:C^{\infty} \longrightarrow T1$. This collection can be equipped with the structure of a $T$-operad by defining the identity map $ids:1 = C^{(0)} \longrightarrow C^{\infty}$ to be the coprojection, and building the composition map $comp: C^{\infty} \circ C^{\infty} \longrightarrow C^{\infty}$ using canonical maps $comp_{(k,m)}:C^{(k)} \circ C^{(m)} \longrightarrow C^{(k+m)}$ defined by induction on $m$ for a fixed $k$. It is the construction of the composition map that requires $T$ to be finitary and $\mathcal{S}$ to be a presheaf category. Now that we know the value of the free functor on objects of $T \mhyphen \mathbf{Coll}$, it is straightforward to extend this definition to morphisms, and to show that our functor is left adjoint to the forgetful one.
\end{proof}

The free monoid monad on $\mathbf{Set}$ is finitary \cite[2.24]{A+S}, and $\mathbf{Set}$ is certainly a presheaf category, so Theorem \ref{T-op adjunction} applies to plain operads. The example below provides an intuitive description of free plain operads, which will be of great use in Sections \ref{symmop section} and \ref{slices section}.

\begin{example}\label{free plain operads} 
 Let $c:C \longrightarrow \mathbb{N}$ be a plain collection. Recall from Example \ref{plain op} that an element $\rho$ of $C$ may be represented as a tree diagram of height 1. The set $C^{(k)}$ can then be interpreted as the set of all tree diagrams of height $\leqslant k$ built from the elements of $C$, and the functions $i_k:C^{(k)} \longrightarrow C^{(k+1)}$ are the canonical inclusions. To see this, let us consider the construction of $C^{(k)}$ for low values of $k$:

\begin{list}{•}{}

\item The set $C^{(0)} := 1 = \{ \text{id} \}$ is the terminal set whose single element we refer to as the identity. We represent this element as the unique tree diagram of height 0, i.e., as a single vertical line. 

\item There is an isomorphism $C^{(1)} := 1 + (1 \circ C) \cong 1 + C$, so $C^{(1)} \cong \{ \rho \ | \ \rho = \text{id} \ \text{or} \ \rho \in C \}$. The elements of $C^{(1)}$ therefore correspond to tree diagrams of height $\leqslant 1$. 

\item The elements of $(1 + C) \circ C$ are tuples $(\rho_1,...,\rho_m, \, \rho)$ where $\rho \in C$, $c(\rho) = m$, and for each $i$ either $\rho_i = \text{id}$ or $\rho_i \in C$. We will denote these tuples using composite notation,
$$(\rho_1,...,\rho_m, \, \rho) = \rho \circ (\rho_1,...,\rho_m).$$
If $\rho_i = \text{id}$ for all $i$, then we denote the element simply by $\rho$, so
$$\rho \circ (\text{id},...,\text{id}) = \rho.$$
Note that if $\rho_i \neq \text{id}$ for at least one $i$, then the tree diagram associated to $\rho \circ (\rho_1,...,\rho_m)$ has height 2. Now since $C^{(2)}:=1+(C^{(1)} \circ C) \cong 1 + ((1 + C ) \circ C)$, we see that $C^{(2)}$ is isomorphic to the set of tree diagrams of height $\leqslant 2$.

\end{list}

The pattern above continues, so the underlying set $\text{colim}_k \, C^{(k)} = C^{\infty}$ of the free plain operad on $c:C \longrightarrow \mathbb{N}$ may be thought of as the set of all tree diagrams, of any height, built from the elements of $C$. 
Each of these diagrams represents a unique free operadic composite of the elements of $C$. 
Note that using the associativity and identity axioms for operads, we can show by induction on $k$ that every non-identity element of $C^{\infty}$ with associated tree diagram of height $k$ may be expressed in the form 
$$Q \circ (\rho_1,...,\rho_m),$$ 
where $Q$ is a free operadic composite of the elements of $C$ with an associated tree diagram of height $k-1$, $\rho_i = \text{id}$ or $\rho_i \in C$, and $\rho_i \neq \text{id}$ for at least one $i$. The underlying collection map $c_{\infty}:C^{\infty} \longrightarrow \mathbb{N}$ is given by \label{underlying collection of free plain op} 
$$c_{\infty}( Q \circ (\rho_1,...,\rho_m)) = c(\rho_1) + ... + c(\rho_m),$$
where we define $c(\text{id}) = 1$. 
\end{example}

We end this section with a specific example of a free plain operad.

\begin{example}\label{free op for magmas} Recall that a \textit{magma} is a set equipped with a binary operation satisfying no axioms. The plain operad for magmas is the free plain operad on the collection $j : J \longrightarrow \mathbb{N}$, where $J = \{ b \}$ is a single element set and $j(b) = 2$. The elements of $J^{\infty}$ correspond to tree diagrams built from $b$; the identity element $\text{id}$ corresponds to the tree diagram of height 0, the unique tree diagram
\vspace{1mm}
\begin{center}
\begin{tikzpicture}[node distance=5cm, auto]

\node[circle,draw] (A) at (0,0) {$b$};

\draw(A) to (0,-1);
\draw(A) to (0.7,1);
\draw(A) to (-0.7,1);

\end{tikzpicture}
\end{center}
of height 1 corresponds to $b$, the tree diagrams
\vspace{1mm}
\begin{center}
\begin{tikzpicture}[node distance=5cm, auto]

\node[circle,draw] (A) at (0,0) {$b$};
\node[circle,draw, scale=0.9] (X) at (-0.8,1) {$b$};

\draw(A) to (0,-1);
\draw(A) to (X);
\draw(A) to (1.2,2);

\draw(X) to (-1.2,2);
\draw(X) to (-0.4,2);

\node[circle,draw] (B) at (4.5,0) {$b$};
\node[circle,draw, scale=0.9] (V) at (5.3,1) {$b$};

\draw(B) to (4.5,-1);
\draw(B) to (3.3,2);
\draw(B) to (V);

\draw(V) to (4.9,2);
\draw(V) to (5.7,2);

\node[circle,draw] (C) at (9,0) {$b$};
\node[circle,draw, scale=0.9] (U) at (8.2,1) {$b$};
\node[circle,draw, scale=0.9] (V) at (9.8,1) {$b$};

\draw(C) to (9,-1);
\draw(C) to (U);
\draw(C) to (V);

\draw(U) to (7.8,2);
\draw(U) to (8.6,2);

\draw(V) to (9.4,2);
\draw(V) to (10.2,2);

\end{tikzpicture}
\end{center}
of height 2 correspond to the free composites $b \circ (b, \text{id})$, $b \circ (\text{id}, b)$ and $b \circ (b, b)$, and so on. 
An algebra $\theta:X_{J^{\infty}} \longrightarrow X$ for $J^{\infty}$ is precisely a magma with underlying set $X$. The single element $b \in J$ provides a binary operation on the elements of $X$; given $x_1, x_2 \in X$ we write $\theta(x_1, x_2, b) = b(x_1,x_2) = x_1 \ast x_2$. The elements of $J^{\infty}$ obtained by operadic composition of multiple copies of $b$ provide operations derived from this basic one, for instance, 
$$\big( b \circ (b, \text{id}) \big) (x_1, x_2, x_3) = b \circ \big( b(x_1,x_2), \text{id}(x_3) \big) = (x_1 \ast x_2) \ast x_3.$$
\end{example}

\section{Globular operads}\label{GlobOp Section}

We now turn our attention to globular operads, whose algebras bear a strong formal resemblance to algebraic higher categories. The material in this section is covered in greater detail in \cite{Me}.

\subsection{Globular operads and their algebras}\label{GlobOpSubSection} Globular operads are a kind of $T$-operad arising from a monad on the category of globular sets. Just as the underlying data of a monoid is a set, the underlying graph data of a higher category is a globular set.

\begin{definition}\label{GSet is presheaf} The \textit{category $\mathbf{GSet}$ of globular sets} is the presheaf category $[\mathbb{G}^{\textrm{op}}, \, \mathbf{Set}]$, where $\mathbb{G}$ is generated by the graph
\begin{center}
\begin{tikzpicture}[node distance=2cm, auto]

\node (A) {$0$};
\node (B) [left of=A] {$1$};
\node (C) [left of=B] {$2$};
\node (D) [left of=C] {$...$};

\draw[transform canvas={yshift=0.5ex},->] (A) to node [swap] {$s$} (B);
\draw[transform canvas={yshift=-0.5ex},->] (A) to node  {$t$} (B);

\draw[transform canvas={yshift=0.5ex},->] (B) to node [swap] {$s$} (C);
\draw[transform canvas={yshift=-0.5ex},->] (B) to node {$t$} (C);

\draw[transform canvas={yshift=0.5ex},->] (C) to node [swap] {$s$} (D);
\draw[transform canvas={yshift=-0.5ex},->] (C) to node {$t$} (D);

\end{tikzpicture}
\end{center}
subject to the equations $ss=ts$ and $st=tt$. Given a globular set $\bs{G}:\mathbb{G}^{\textrm{op}} \longrightarrow \mathbf{Set}$, we refer to the elements of $G_n := \bs{G}(n)$ as the $n$-cells of $\bs{G}$, and to the functions $s, t:G_n \longrightarrow G_{n-1}$ as the source and target maps, respectively. 
\end{definition}

\begin{definition} 
Let $U:\mathbf{Str} \, \omega \mhyphen \mathbf{Cat} \longrightarrow \mathbf{GSet}$ be the canonical forgetful functor from the category of strict $\omega$-categories.
\begin{enumerate}
\item The \textit{free strict $\omega$-category functor} $(-)^*:\mathbf{GSet} \longrightarrow \mathbf{Str} \, \omega \mhyphen \mathbf{Cat}$ is the left adjoint to $U$.  
Given a globular set $\bs{G}$, the free strict $\omega$-category $\bs{G}^*$ is the one whose $n$-cells are $n$-pasting diagrams in $\bs{G}$ and whose composition is concatenation along matching boundary cells.
\item The \textit{free strict $\omega$-category monad} on $\mathbf{GSet}$, which by abuse of notation we also denote by $(-)^* = ((-)^*,\eta, \mu )$, is the monad arising from the adjunction $(-)^* \dashv U : \ \mathbf{Str} \, \omega \mhyphen \mathbf{Cat} \longrightarrow \mathbf{GSet}$. 
\end{enumerate}
\end{definition}

\begin{definition} \label{degenerate pd}
A \textit{degenerate} $n$-pasting diagram in a globular set $\bs{G}$ is an $n$-pasting diagram which does not contain any $n$-cells of $\bs{G}$.
\end{definition}

\begin{remark}
The degenerate $n$-pasting diagrams in $\bs{G}$ are the identity $n$-cells of the $\omega$-category $\bs{G}^*$.
\end{remark}

It is shown in \cite[F.2.2]{TL} that the forgetful functor $U:\mathbf{Str} \, \omega \mhyphen \mathbf{Cat} \longrightarrow \mathbf{GSet}$ is monadic, so the categories $\mathbf{Str} \, \omega \mhyphen \mathbf{Cat}$ and $(-)^* \mhyphen \mathbf{Alg}$ are equivalent. In fact, a $(-)^{\ast}$-algebra structure on a globular set $\bs{G}$ is precisely a strict $\omega$-category with underlying globular set $\bs{G}$, and a morphism of $(-)^{\ast}$-algebras is precisely a strict $\omega$-functor. It is also shown in \cite[F.2.2]{TL} that the monad $(-)^*$ is cartesian, and since $\mathbf{GSet}$ is a presheaf category, it is certainly complete, allowing for the following definition.

\begin{definition}
The \textit{category $\mathbf{GOp}$ of globular operads} is the category of $T$-operads for which $T$ is the free strict $\omega$-category monad on $\mathbf{GSet}$.
\end{definition}

To see how theories of higher category may be understood as globular operads, let us first unpack the definition above.

\begin{notation}\label{TerminalGlobSet} We denote by $\mathbf{1}$ the terminal globular set given by the following diagram in $\mathbf{Set}$.
\vspace{1mm}
\begin{center}
\begin{tikzpicture}[node distance=2cm, auto]

\node (A) {$\{ \filledstar \}$};
\node (B) [left of=A] {$\{ \filledstar \}$};
\node (C) [left of=B] {$\{ \filledstar \}$};
\node (D) [left of=C] {$...$};

\draw[transform canvas={yshift=0.5ex},->] (B) to node {} (A);
\draw[transform canvas={yshift=-0.5ex},->] (B) to node [swap] {} (A);

\draw[transform canvas={yshift=0.5ex},->] (C) to node {} (B);
\draw[transform canvas={yshift=-0.5ex},->] (C) to node [swap] {} (B);

\draw[transform canvas={yshift=0.5ex},->] (D) to node {} (C);
\draw[transform canvas={yshift=-0.5ex},->] (D) to node [swap] {} (C);

\end{tikzpicture}
\end{center}
\end{notation}

The $n$-cells of $\bs{1}^*$ are $n$-pasting diagrams in $\bs{1}$; observe that there is no need to label the individual cells within these pasting diagrams since each $k$-cell represents the single $k$-cell of $\bs{1}$. A typical 2-cell $\pi$ of $\bs{1}^*$ is illustrated below.
\begin{center}
\begin{tikzpicture}[node distance=1.5cm, auto]

\node (A) {$\cdot$};
\node (B) [node distance=2.5cm, right of=A] {$\cdot$};
\node (C) [node distance=2.5cm, right of=B] {$\cdot$};
\node (D) [node distance=2.5cm, right of=C] {$\cdot$};
\node (D') [node distance=0.7cm, left of=A] {$\pi \ =$};

\draw[->, bend left=85] (A) to node {} (B);
\draw[->, bend left=25] (A) to node {} (B);
\draw[->, bend right=25] (A) to node [swap] {} (B);
\draw[->, bend right=85] (A) to node [swap] {} (B);

\node (XB) [node distance=1.25cm, right of=A] {$\Downarrow $};
\node (XA) [node distance=0.6cm, above of=XB] {$\Downarrow $};
\node (XC) [node distance=0.6cm, below of=XB] {$\Downarrow $};

\draw[->] (B) to node {} (C);

\draw[->, bend left=60] (C) to node {} (D);
\draw[->] (C) to node {} (D);
\draw[->, bend right=60] (C) to node [swap] {} (D);

\node (W) [node distance=1.25cm, right of=C] {};
\node (YA) [node distance=0.38cm, above of=W] {$\Downarrow $};
\node (YC) [node distance=0.38cm, below of=W] {$\Downarrow $};

\end{tikzpicture}
\end{center}

\begin{notation}\label{boundary notation}
Since the source and target $(n{-}1)$-cells of each $n$-cell of $\bs{1}^*$ are equal, we write ${\partial}$ rather than $s$ or $t$ when we want to refer to the source or target of a cell. 
\end{notation}

\begin{definition} 
The \textit{category $\mathbf{GColl}$ of globular collections} is the slice category $\mathbf{GSet} / \bs{1}^*$. 
\end{definition}

A globular operad $\bs{G} = (\bs{G}, g, ids, comp)$ is a globular collection $g:\bs{G} \longrightarrow \bs{1}^*$ together with morphisms $ids:\bs{1} \longrightarrow \bs{G}$ and $comp:\bs{G} \circ \bs{G} \longrightarrow \bs{G}$ of collections, satisfying identity and associativity axioms. We think of each $n$-cell $\Lambda$ of $\bs{G}$ as an abstract operation composing $n$-pasting diagrams of shape $g(\Lambda)$ into single $n$-cells. For example, if $\chi$ is a 2-cell of $\bs{G}$ and $g(\chi) = \tau$,
\begin{center}\label{tau}
\begin{tikzpicture}[node distance=2.5cm, auto]

\node (B) {$\cdot$};
\node (C) [right of=B] {$\cdot$};
\node (D) [node distance=2.3cm, right of=C] {$\cdot$};
\node (D') [node distance=4mm, right of=D] {$= \tau$}; 

\draw[->] (C) to node {} (D);

\draw[->, bend left=60] (B) to node {} (C);
\draw[->] (B) to node {} (C);
\draw[->, bend right=60] (B) to node [swap] {} (C);

\node (W) [node distance=1.25cm, right of=B] {};
\node (YA) [node distance=0.38cm, above of=W] {$\Downarrow $};
\node (YC) [node distance=0.38cm, below of=W] {$\Downarrow $};

\node (X) [node distance=3.5cm, left of=B] {$Y$};
\node (Y) [left of=X] {$X$};
\draw[->, bend left=40] (Y) to node {$x$} (X);
\draw[->, bend right=40] (Y) to node [swap] {$x'$} (X);
\node (Z) [node distance=1.25cm, right of=Y] {$\Downarrow \chi$};

\node (P) [node distance=0.5cm, right of=X] {};
\node (Q) [node distance=0.5cm, left of=B] {};
\draw[|->, dashed] (P) to node {$g$} (Q);

\end{tikzpicture}
\end{center}
then we think of $\chi$ as an abstract operation composing 2-pasting diagrams of shape $\tau$ into single 2-cells. The map $ids:\bs{1} \longrightarrow \bs{G}$ picks out an $n$-cell of $\bs{G}$ for each $n \in \mathbb{N}$. We denote this cell by $\text{id}_n$ and refer to it as the identity $n$-cell. The image of $\text{id}_n$ under $g$ is the $n$-pasting diagram in $\bs{1}$ made up of a single $n$-cell.
\begin{center}
\begin{tikzpicture}[node distance=2.5cm, auto]

\node (C) {$\cdot$};
\node (D) [node distance=2.5cm, right of=C] {$\cdot$};

\draw[->, bend left=45] (C) to node {} (D);
\draw[->, bend right=45] (C) to node [swap] {} (D);

\node (W) [node distance=1.25cm, right of=C] {$\Downarrow$};

\node (X) [node distance=3.5cm, left of=C] {$\text{id}_0$};
\node (Y) [left of=X] {$\text{id}_0$};
\draw[->, bend left=40] (Y) to node {$\text{id}_1$} (X);
\draw[->, bend right=40] (Y) to node [swap] {$\text{id}_1$} (X);
\node (Z) [node distance=1.25cm, right of=Y] {$\Downarrow \text{id}_2$};

\node (P) [node distance=0.5cm, right of=X] {};
\node (Q) [node distance=0.5cm, left of=C] {};
\draw[|->, dashed] (P) to node {$g$} (Q);

\end{tikzpicture}
\end{center}

The $n$-cells of $\bs{G} \circ \bs{G}$ are pairs consisting of an $n$-cell $\Lambda$ of $\bs{G}$ together with an $n$-pasting diagram in $\bs{G}$ of shape $g(\Lambda)$. A typical 2-cell $((\nu, \nu', v), \chi )$ of $\bs{G} \circ \bs{G}$ is
\vspace{-4.5mm}
\begin{center}
\[ \left(
\begin{tikzpicture}[node distance=2cm, auto, baseline=-1.5mm]

\node (A) {$U$};
\node (B) [node distance=2.5cm, right of=A] {$V$};
\node (C) [node distance=2.5cm, right of=B] {$W$};
\node (p) [node distance=0.44cm, right of=C] {};
\node () [node distance=0.1cm, below of=p] {$,$};
\node (C') [node distance=1cm, right of=C] {$X$};
\node (D) [node distance=2.5cm, right of=C'] {$Y$};

\draw[->] (B) to node {$v$} (C);

\draw[->, bend left=60] (A) to node {$u$} (B);
\draw[->] (A) to node {} (B);
\draw[->, bend right=60] (A) to node [swap] {$u''$} (B);

\node (W) [node distance=1.25cm, right of=A] {};
\node (YA) [node distance=0.4cm, above of=W] {$\Downarrow  \nu $};
\node (YC) [node distance=0.4cm, below of=W] {$\Downarrow  \nu' $};

\draw[->, bend left=40] (C') to node {$x$} (D);
\draw[->, bend right=40] (C') to node [swap] {$x'$} (D);

\node (XB) [node distance=1.3cm, right of=C'] {$\Downarrow  \chi $};

\end{tikzpicture}
\right) \]
\end{center}
where the left hand side is a 2-pasting diagram in $\bs{G}$ and $\chi$ is the 2-cell above  satisfying $g(\chi)=\tau$. We use this example to describe the composition map. Let $g(\nu) = \tau_1$, $g(\nu') = \tau_2$, and $g(v) = \tau_3$,
\begin{center}
\resizebox{5.9in}{!}{
\begin{tikzpicture}[node distance=2.5cm, auto]

\node (A) {$U$};
\node (B) [node distance=2.5cm, right of=A] {$V$};
\node (C) [node distance=2.5cm, right of=B] {$W$};
\draw[->] (B) to node {$v$} (C);
\draw[->, bend left=60] (A) to node {$u$} (B);
\draw[->] (A) to node {} (B);
\draw[->, bend right=60] (A) to node [swap] {$u''$} (B);
\node (X) [node distance=1.25cm, right of=A] {};
\node (Y) [node distance=0.4cm, above of=X] {$\Downarrow \nu $};
\node (Z) [node distance=0.4cm, below of=X] {$\Downarrow \nu' $};

\node (D) [node distance=2cm, below of=B] {$\cdot$};
\node (E) [node distance=2.3cm, right of=D] {$\cdot$};
\node (F) [node distance=2.3cm, right of=E] {$\cdot$};
\node (G) [node distance=2.3cm, right of=F] {$\cdot$};
\node (G') [node distance=0.5cm, right of=G] {$= \tau_3$};
\draw[->] (F) to node {} (G);
\draw[->] (E) to node {} (F);
\draw[->] (D) to node {} (E);
\node (a) [node distance=1.25cm, right of=B] {};
\node (a') [node distance=0.2cm, below of=a] {};
\node (b) [node distance=1.2cm, left of=F] {};
\node (b') [node distance=0.15cm, above of=b] {};
\draw[|->, dashed] (a') to node {$g$} (b');

\node (H) [node distance=1.5cm, left of=A, above of=A] {$\cdot$};
\node (I) [node distance=2.4cm, left of=H] {$\cdot$};
\node (I') [node distance=0.5cm, left of=I] {$\tau_1 = $};
\draw[->, bend left=40] (I) to node {} (H);
\draw[->, bend right=40] (I) to node {} (H);
\node (X') [node distance=1.2cm, right of=I] {$\Downarrow$};;
\node (y) [node distance=0.5cm,above of=A] {};
\draw[|->, dashed] (y) to node [swap] {$g$} (H);

\node (K) [node distance=1.5cm, left of=A, below of=A] {$\cdot$};
\node (L) [node distance=2.4cm, left of=K] {$\cdot$};
\node (L') [node distance=0.5cm, left of=L] {$\tau_2 =$};
\draw[->, bend left=60] (L) to node {} (K);
\draw[->] (L) to node {} (K);
\draw[->, bend right=60] (L) to node {} (K);
\node (T) [node distance=1.2cm, right of=L] {};
\node (X'') [node distance=0.3cm, above of=T] {$\Downarrow$};
\node (Y'') [node distance=0.35cm, below of=T] {$\Downarrow$};
\node (z) [node distance=0.5cm, below of=A] {};
\draw[|->, dashed] (z) to node {$g$} (K);

\end{tikzpicture}\label{tau_i}
}
\end{center}
and denote by $ \tau \circ (\tau_1, \tau_2, \tau_3)$ the 2-cell of $\bs{1}^*$ obtained by replacing the three cells of $\bs{1}$ making up $\tau$ with $\tau_1, \tau_2$ and $\tau_3$, respectively. Then $comp \, ((\nu, \nu', v), \chi) = \chi \circ (\nu, \nu', v)$
is a 2-cell of $\bs{G}$ satisfying $g(\chi \circ (\nu, \nu', v)) = \tau \circ (\tau_1, \tau_2, \tau_3)$.  
\begin{center}
\resizebox{1\textwidth}{!}{
\begin{tikzpicture}[node distance=1.9cm, auto]

\node (B) {$\cdot$};
\node (C) [node distance=2.5cm, right of=B] {$\cdot$};
\node (D) [right of=C] {$\cdot$};
\node (E) [right of=D] {$\cdot$};
\node (F) [right of=E] {$\cdot$};

\draw[->, bend left=25] (B) to node {} (C);
\draw[->, bend right=25] (B) to node {} (C);
\draw[->, bend left=80] (B) to node {} (C);
\draw[->, bend right=80] (B) to node {} (C);
\draw[->] (C) to node {} (D);
\draw[->] (D) to node {} (E);
\draw[->] (E) to node {} (F);

\node (X) [node distance=1.25cm, right of=B] {$\Downarrow$};
\node (Y) [node distance=0.6cm, above of=X] {$\Downarrow$};
\node (Z) [node distance=0.6cm, below of=X] {$\Downarrow$};

\node (R) [node distance=3.5cm, left of=B] {$Y \circ (W)$};
\node (Q) [node distance=4cm, left of=R] {$X \circ (U)$};
\draw[->, bend left=40] (Q) to node {$x \circ (u,v)$} (R);
\draw[->, bend right=40] (Q) to node [swap] {$x' \circ (u'', v)$} (R);
\node [node distance=2cm, right of=Q] {$\Downarrow \chi \circ (\nu, \nu', v)$};

\node (r) [node distance=1cm, right of=R] {};
\node (a) [node distance=0.6cm, left of=B] {};
\draw[|->, dashed] (r) to node {$g$} (a);

\end{tikzpicture}
}
\end{center}
More generally, the composition map sends an $n$-cell $((\Lambda_1,...,\Lambda_m), \Lambda )$ of $\bs{G} \circ \bs{G}$ to an $n$-cell $\Lambda \circ (\Lambda_1,...,\Lambda_m)$ of $\bs{G}$ for which 
$$g( \Lambda \circ (\Lambda_1,...,\Lambda_m)) = g(\Lambda) \circ (g(\Lambda_1),...,g(\Lambda_m))$$ 
is the $n$-pasting diagram in $\bs{1}$ obtained by 
replacing the individual cells making up $g(\Lambda)$ with the $g(\Lambda_i)$s.
We think of $\Lambda \circ (\Lambda_1,...,\Lambda_m)$ as the abstract operation composing $n$-pasting diagrams of shape $g( \Lambda \circ (\Lambda_1,...,\Lambda_m) )$ given by first using the $\Lambda_i$s to compose smaller components and then applying $\Lambda$ to the result. 

Consider now an algebra for a globular operad $\bs{G}$, that is, an algebra for the monad $(-)_{\bs{G}}$ on $\mathbf{GSet}$; see Definition \ref{operad-monad}. Given a globular set $\bs{A}$, the $n$-cells of $\bs{A_G}$ are pairs consisting of an $n$-cell $\Lambda$ of $\bs{G}$ together with an $n$-pasting diagram in $\bs{A}$ of shape $g(\Lambda)$. 
A typical 2-cell $( (\alpha, \alpha', b), \chi)$ of $\bs{A_G}$ is
\vspace{-4mm}
\begin{center}
\[ \left( 
\begin{tikzpicture}[node distance=2cm, auto, baseline=-1.5mm]

\node (A) {$A$};
\node (B) [node distance=2.5cm, right of=A] {$B$};
\node (C) [node distance=2.4cm, right of=B] {$C$};
\node (P) [node distance=0.45cm, right of=C] {};
\node () [node distance=0.1cm, below of=P] {\large ,};
\node (C') [node distance=1cm, right of=C] {$X$};
\node (D) [node distance=2.5cm, right of=C'] {$Y$};

\draw[->, bend left=60] (A) to node {$a$} (B);
\draw[->] (A) to node {} (B);
\draw[->, bend right=60] (A) to node [swap] {$a''$} (B);

\draw[->] (B) to node {$b$} (C);

\node (W) [node distance=1.25cm, right of=A] {};
\node (YA) [node distance=0.4cm, above of=W] {$\Downarrow  \alpha$};
\node (YC) [node distance=0.4cm, below of=W] {$\Downarrow  \alpha'$};

\draw[->, bend left=40] (C') to node {$x$} (D);
\draw[->, bend right=40] (C') to node [swap] {$x'$} (D);

\node (XB) [node distance=1.3cm, right of=C'] {$\Downarrow  \chi$};

\end{tikzpicture}
\right) \]
\end{center}
where the left hand side is a 2-pasting diagram in $\bs{A}$ and $\chi$ is the 2-cell of $\bs{G}$ described earlier satisfying $g(\chi) = \tau$. 
An algebra for $\bs{G}$ on $\bs{A}$ is then a morphism $\theta:\bs{A_G} \longrightarrow \bs{A}$ of globular sets satisfying unit and multiplication axioms. Given an $n$-cell $( (\alpha_1,...,\alpha_m),  \, \Lambda )$ of $\bs{A_G}$ we write
$$\Lambda(\alpha_1,...,\alpha_m) := \theta ( (\alpha_1,...,\alpha_m), \, \Lambda ) \in A_n,$$
and think of $\Lambda(\alpha_1,...,\alpha_m)$ as a composition of the $n$-pasting diagram $(\alpha_1,...,\alpha_m)$ in $\bs{A}$ into a single $n$-cell of $\bs{A}$. So $\theta$ sends the 2-cell $((\alpha, \alpha', b), \chi)$ of $\bs{A_G}$ above to a 2-cell in $\bs{A}$ of the form
\begin{center}
\begin{tikzpicture}[node distance=2cm, auto]
\node (R) {$X(A)$};
\node (Q) [node distance=3.7cm, right of=R] {$Y(C)$.};
\draw[->, bend left=37] (R) to node {$x(a,b)$} (Q);
\draw[->, bend right=37] (R) to node [swap] {$x'(a'',b)$} (Q);
\node (U) [node distance=1.85cm, right of=R] {$\Downarrow \chi(\alpha, \alpha',b)$};
\end{tikzpicture}
\end{center}
The unit axiom tell us that for any $n$-cell $\alpha$ of $\bs{A}$, we have $\text{id}_n(\alpha) = \alpha$, so the identities provide the `do nothing' way of composing single cells. Meanwhile, the multiplication axiom, expressed by the equality below, says that composition of pasting diagrams in $\bs{A}$ behaves in a coherent way with respect to operadic composition in $\bs{G}$,
$$\Lambda \, \big( \Lambda_1 (\alpha_{11},...,\alpha_{1k_1}),..., \, \Lambda_m(\alpha_{m1},...,\alpha_{mk_m}) \big) = \big( \Lambda \circ (\Lambda_1,...,\Lambda_m) \big) (\alpha_{11},...,\alpha_{1k_1},...,\alpha_{m1},...,\alpha_{mk_m}).$$
Moreover, a morphism $F:(\bs{A},\theta) \longrightarrow (\bs{B},\sigma)$ of algebras for $\bs{G}$ is a morphism $F:\bs{A} \longrightarrow \bs{B}$ of the underlying globular sets strictly preserving the composition of pasting diagrams defined by the algebra structures.

\begin{example}\label{GlobOp T for strict w cats}
By Lemma \ref{initial-terminal}, the terminal globular operad $\bs{T}$ is given by equipping the collection $1_{\bs{1}^*}:\bs{1}^* \longrightarrow \bs{1}^*$ with its unique operad structure, and the monad $(-)_{\bs{T}}$ induced on $\mathbf{GSet}$ is isomorphic to the free strict $\omega$-category monad $(-)^*$. The category $\bs{T} \mhyphen \bf{Alg}$ of algebras for $\bs{T}$ is then equivalent to the category $\bf{Str} \, \omega \mhyphen \bf{Cat}$ of strict $\omega$-categories. 
We refer to $\bs{T}$ as \textit{the globular operad for strict $\omega$-categories}.
\end{example}

Observe that both an $\omega$-category structure and a $\bs{G}$-algebra structure on a globular set $\bs{A}$ are essentially ways of composing pasting diagrams in $\bs{A}$ a coherent way. However, not every globular operad gives rise to something that we could call a theory of $\omega$-category. Take for example the initial globular operad given by equipping the globular collection $\eta_{\bs{1}}:\bs{1} \longrightarrow \bs{1}^*$ with its unique operad structure. By Lemma \ref{initial-terminal}, the category of algebras for this operad is isomorphic to $\mathbf{GSet}$ rather than to some category of $\omega$-categories and the strict functors between them. However, asking for our globular operads to be contractible (defined below) ensures that their algebras do satisfy the required conditions on composition and coherence for $\omega$-categories. We refer the reader to \cite[\S 5]{Me} for a full explanation of these conditions and their relationship to contractibility, where it is also determined precisely when a globular operad is equivalent to some theory of higher category.

\begin{definition}
Let $\bs{G}$ be a globular set. For $n \geqslant 1$ we say that a pair $(x,x')$ of $n$-cells of $\bs{G}$ are \textit{parallel} if they share the same source and target, so $s(x)=s(x')$ and $t(x)=t(x')$. All pairs of 0-cells are considered parallel.
\end{definition}

\begin{definition}\cite[\S 9.1]{TL} A \textit{contraction function} on a globular collection $g:\bs{G} \longrightarrow \bs{1}^*$ is a function assigning to each triple $(x, x', \tau)$, where $(x,x')$ is a parallel pair of $n$-cells of $\bs{G}$ satisfying $g(x) = g(x') = {\partial}\tau$ (see Notation \ref{boundary notation}), an $(n{+}1)$-cell $\chi:x \longrightarrow x'$ of $\bs{G}$ satisfying $g(\chi)=\tau$. 
We refer to the cells in the image of a contraction function as \textit{contraction cells}.
A globular operad $\bs{G}$ is \textit{contractible} if there exists a contraction function on its underlying globular collection.
\end{definition}

\begin{example} There is a unique contraction on the globular operad $\bs{T}$ for strict $\omega$-categories; see Example \ref{GlobOp T for strict w cats}. When equipped with this contraction, every $n$-cell of $\bs{T}$ for $n>0$ is a contraction cell.
\end{example}

We end this section by restricting everything here to $n$-dimensions, allowing us to apply the same framework to $n$-categories.

\begin{definition}\label{GSet_n}
The \textit{category $\mathbf{GSet}_{\bs{n}}$ of $n$-globular sets} is the presheaf category $[\mathbb{G}_n^{\textrm{op}}, \, \mathbf{Set}]$, where $\mathbb{G}_n$ is generated by the graph
\begin{center}
\begin{tikzpicture}[node distance=2cm, auto]

\node (A) {$0$};
\node (B) [left of=A] {$1$};
\node (C) [left of=B] {$...$};
\node (D) [left of=C] {$n$};

\draw[transform canvas={yshift=0.5ex},->] (A) to node [swap] {$s$} (B);
\draw[transform canvas={yshift=-0.5ex},->] (A) to node {$t$} (B);

\draw[transform canvas={yshift=0.5ex},->] (B) to node [swap] {$s$} (C);
\draw[transform canvas={yshift=-0.5ex},->] (B) to node {$t$} (C);

\draw[transform canvas={yshift=0.5ex},->] (C) to node [swap] {$s$} (D);
\draw[transform canvas={yshift=-0.5ex},->] (C) to node {$t$} (D);

\end{tikzpicture}
\end{center}
subject to the equations $ss=ts$ and $st=tt$.
\end{definition}

The free strict $n$-category monad on $\mathbf{GSet}_{\bs{n}}$, which by abuse of notation we also denote by $(-)^*$, is defined by truncating the free strict $\omega$-category monad on $\mathbf{GSet}$ to $n$ dimensions.

\begin{definition} The \textit{category $\mathbf{GOp_{\bs{n}}}$ of $n$-globular operads} is the category of $T$-operads for which $T$ is the free strict $n$-category monad on $\mathbf{GSet}_{\bs{n}}$.
\end{definition}

\begin{example}\label{GlobOp T for strict n cats}
It follows from Lemma \ref{initial-terminal} that the monad induced on $\mathbf{GSet}_{\bs{n}}$ by the terminal $n$-globular operad $\bs{T_n}$ is isomorphic to the free strict $n$-category monad, so the category $\bs{T_n} \mhyphen \bf{Alg}$ of algebras for $\bs{T_n}$ is equivalent to the category $\mathbf{Str} \, n \mhyphen \mathbf{Cat}$ of strict $n$-categories. 
We refer to $\bs{T_n}$ as \textit{the globular operad for strict $n$-categories}.
\end{example}

\begin{definition} The category $\mathbf{GColl}_{\bs{n}}$ of \textit{$n$-globular collections} is the slice category $\mathbf{GSet}_{\bs{n}} / \bs{1}^*$, where $\bs{1}^*$ denotes the terminal $n$-globular operad.
\end{definition}

A contraction function on an $n$-globular collection is defined analogously to a contraction function on a globular collection. However, there is an extra requirement that an $n$-globular operad must meet in order to be contractible. This is to account for the fact that the coherence condition for $n$-categories is stronger than the coherence condition for $\omega$-categories; in $n$-categories the $n$-cells require special attention. Specifically, it is thought that for any $n$-pasting diagram in an $n$-category together with a way to compose its $(n-1)$-dimensional boundary, the axioms should ensure that there is a \textit{unique} choice of composite $n$-cell that is consistent with the composition of the boundary.

\begin{definition}\label{contractible n-glob ops}
An $n$-globular operad $\bs{G_n}$ is \textit{contractible} if
\begin{enumerate}[i)]
\item there exists a contraction function on the underlying collection $g:\bs{G_n} \longrightarrow \bs{1}^*$, and
\item any parallel pair $(\chi,\chi')$ of $n$-cells of $\bs{G_n}$ satisfying $g(\chi) = g(\chi')$ are equal.
\end{enumerate}
\end{definition} 

\begin{example} There is a unique contraction on the $n$-globular operad $\bs{T_n}$ for strict $n$-categories. When equipped with this contraction, every $k$-cell of $\bs{T_n}$ is a contraction cell.
\end{example}

\begin{lemma}\label{contractible n-cells} A contractible $n$-globular operad $\bs{G_n}$ is completely determined by its $k$-cells for all $k<n$; for each triple $(x, x', \tau)$ where $(x,x')$ is a parallel pair of $(n{-}1)$-cells of $\bs{G_n}$ satisfying $g(x) = g(x') = {\partial}\tau$, there exists a \textit{unique} $n$-cell $\chi:x \longrightarrow x'$ of $\bs{G_n}$ satisfying $g(\chi) = \tau$. Similarly, given any $n$-cell $((\Lambda_1,...,\Lambda_m), \Lambda )$ of $\bs{G_n} \circ \bs{G_n}$ there is a unique choice of composite $n$-cell $\Lambda \circ (\Lambda_1,...,\Lambda_m)$.
\end{lemma}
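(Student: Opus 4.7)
The plan is to deduce both assertions immediately from the two clauses of Definition \ref{contractible n-glob ops}. For the first statement, I start with a parallel pair $(x,x')$ of $(n{-}1)$-cells of $\bs{G_n}$ and a cell $\tau$ of $\bs{1}^*$ with $g(x)=g(x')={\partial}\tau$. Clause (i) supplies a contraction function, which hands me an $n$-cell $\chi\colon x\longrightarrow x'$ with $g(\chi)=\tau$, giving existence. For uniqueness, suppose $\chi'$ is a second such cell. Then $\chi$ and $\chi'$ are parallel $n$-cells of $\bs{G_n}$ (both from $x$ to $x'$) with $g(\chi)=g(\chi')=\tau$, so clause (ii) forces $\chi=\chi'$.

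For the second statement, I will show that the composite $\Lambda\circ(\Lambda_1,\dots,\Lambda_m)$ is pinned down by lower-dimensional data via the first statement. The map $comp\colon \bs{G_n}\circ\bs{G_n}\longrightarrow \bs{G_n}$ is a morphism of globular sets, so it commutes with source and target; hence $s(\Lambda\circ(\Lambda_1,\dots,\Lambda_m))$ and $t(\Lambda\circ(\Lambda_1,\dots,\Lambda_m))$ are $(n{-}1)$-cells obtained by applying $comp$ at dimension $n{-}1$ to the sources and targets of $\Lambda$ and of the $\Lambda_i$, so they involve only $k$-cells with $k<n$. Likewise, $g$ is a morphism of collections, so
\[g\bigl(\Lambda\circ(\Lambda_1,\dots,\Lambda_m)\bigr)=g(\Lambda)\circ\bigl(g(\Lambda_1),\dots,g(\Lambda_m)\bigr),\]
which is determined by the input data. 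Applying the first part of the lemma to this parallel pair of $(n{-}1)$-cells and this element of $(\bs{1}^*)_n$ shows that there is only one $n$-cell of $\bs{G_n}$ compatible with these sources, targets, and $g$-image, so $\Lambda\circ(\Lambda_1,\dots,\Lambda_m)$ must be that cell.

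I do not anticipate a serious obstacle: the lemma is essentially a restatement of the two contractibility conditions, packaged to emphasise that everything in dimension $n$ is forced by dimensions below. The only point requiring care is checking that $s$, $t$ and $g$ of a composite really are computable from the $(n{-}1)$-dimensional data, which amounts to invoking the fact that $comp$ and $g$ are morphisms in $\mathbf{GSet}_{\bs n}$ (respectively $\mathbf{GColl}_{\bs n}$) and hence preserve the globular and collection structure.
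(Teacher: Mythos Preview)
Your argument is correct and is exactly the intended one: the paper states this lemma without proof, treating it as an immediate consequence of the two clauses of Definition~\ref{contractible n-glob ops}, and your write-up spells out precisely that deduction. The only cosmetic point is that when you say ``$g$ is a morphism of collections'' to justify $g(\Lambda\circ(\Lambda_1,\dots,\Lambda_m))=g(\Lambda)\circ(g(\Lambda_1),\dots,g(\Lambda_m))$, what you are really invoking is the commuting square for $comp$ displayed after Definition~\ref{T-operad def}; but this is a matter of phrasing rather than substance.
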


\subsection{Presentations for globular operads}\label{presSubsec}

Algebraic structures like groups are often defined via presentations, meaning that we specify a set of generators such that every element of our group can be expressed as a free product of the generators and their inverses, together with a set of equational relations among these generators. In this section we demonstrate how to construct a globular operad by building a presentation for it. These presentations are more complex than those for structures whose underlying data is just a single set. To build presentations for globular operads, we need to specify generators and relations in each dimension, beginning in dimension 0 and building the presentation recursively. We refer the reader to \cite[\S 6]{Me} for the formal definition of presentations for globular operads, but as it is rather technical and lengthy, and the idea is clear from examples, we will not provide the definition here.

\begin{notation}\label{0-cells in higher cats}
Since we are only interested in globular operads equivalent to some theory of higher category, and there are no non-trivial operations on 0-cells in higher categories, our globular operads will always have a single 0-cell - the identity $\text{id}_0$. This means that the presentations will contain no 0-cell generators, and therefore no 0-cell relations. Since there is no ambiguity, we will leave the single identity 0-cell of these operads unlabeled. For example, a 1-cell $x$ will be represented by 
\begin{center}
\begin{tikzpicture}[node distance=2.4cm, auto]

\node (A) {$\cdot$};
\node (B) [right of =A] {$\cdot$};

\draw[->] (A) to node {$x$} (B);

\node (x) [right of=B] {rather than};

\node (A') [right of=x] {$\text{id}_0$};
\node (B') [right of =A'] {$\text{id}_0$.};

\draw[->] (A') to node {$x$} (B');

\end{tikzpicture}
\end{center}
\end{notation}

\begin{remark}
In each of our examples we exploit the fact that if an $n$-globular operad $\bs{G_n}$ is contractible then there is no need to specify any $n$-cell generators or relations in a presentation for $\bs{G_n}$, since by Lemma \ref{contractible n-cells}, a contractible $n$-globular operad is completely determined by the $k$-cells for all $k<n$.
\end{remark}

Our first example is a presentation for the 3-globular operad for strict 3-categories.

\begin{example}\label{pres for T_3}
The globular operad $\bs{T_3}$ for strict 3-categories is the \textit{contractible} 3-globular operad with

\begin{list}{$\bullet$}{}

\item a single 0-cell, the identity $\text{id}_0$; 

\item 1-cells consisting of the operadic composites of 1-cells $i_1$ and $h_1$ whose images under the underlying collection map are as follows

\begin{center}
\begin{tikzpicture}[node distance=2cm, auto]

\node (A) {$\cdot$};
\node (B) [right of =A] {$\cdot$};

\draw[->] (A) to node {$i_1$} (B);

\node (P) [node distance=3cm, right of =B] {$\cdot$};

\node (b) [node distance=0.5cm, right of=B] {};
\node (p) [node distance=0.5cm, left of=P] {};
\draw[|->, dashed] (b) to node {} (p);

\node (A') [node distance=8mm, below of=A] {$\cdot$};
\node (B') [right of =A'] {$\cdot$};

\draw[->] (A') to node {$h_1$} (B');

\node (P') [node distance=3cm, right of =B'] {$\cdot$};
\node (Q') [right of =P'] {$\cdot$};
\node (R') [right of =Q'] {$\cdot$};

\draw[->] (P') to node {} (Q');
\draw[->] (Q') to node {} (R');

\node (b') [node distance=0.5cm, right of=B'] {};
\node (p') [node distance=0.5cm, left of=P'] {};
\draw[|->, dashed] (b') to node {} (p');

\end{tikzpicture}
\end{center}
subject to the equalities below,
\begin{enumerate}[i)]
\item $h_1 \circ (\text{id}_1, \, i_1) = \text{id}_1$
\item $h_1 \circ (i_1, \, \text{id}_1) = \text{id}_1$
\item $h_1 \circ (\text{id}_1, \, h_1) = h_1 \circ (h_1, \, \text{id}_1)$;
\end{enumerate}

\item 2-cells consisting of the operadic composites of 2-cells $i_2$, $h_2$ and $v_2$ whose images under the underlying collection map are as follows

\begin{center}
\begin{tikzpicture}[node distance=2cm, auto]

\node (A) {$\cdot$};
\node (B) [node distance=2.5cm, right of =A] {$\cdot$};

\draw[->, bend left=40] (A) to node {$\text{id}_1$} (B);
\draw[->, bend right=40] (A) to node [swap] {$\text{id}_1$} (B);

\node () [node distance=1.25cm, right of =A] {$\Downarrow i_2$};

\node (P) [node distance=3cm, right of =B] {$\cdot$};
\node (Q) [right of =P] {$\cdot$};

\draw[->] (P) to node {} (Q);

\node (b) [node distance=0.5cm, right of=B] {};
\node (p) [node distance=0.5cm, left of=P] {};
\draw[|->, dashed] (b) to node {} (p);

\node (A') [node distance=2.2cm, below of=A] {$\cdot$};
\node (B') [node distance=2.5cm, right of =A'] {$\cdot$};

\draw[->, bend left=40] (A') to node {$h_1$} (B');
\draw[->, bend right=40] (A') to node [swap] {$h_1$} (B');

\node () [node distance=1.25cm, right of =A'] {$\Downarrow h_2$};

\node (P') [node distance=3cm, right of =B'] {$\cdot$};
\node (Q') [right of =P'] {$\cdot$};
\node (R') [right of =Q'] {$\cdot$};

\draw[->, bend left=40] (P') to node {} (Q');
\draw[->, bend right=40] (P') to node {} (Q');
\draw[->, bend left=40] (Q') to node {} (R');
\draw[->, bend right=40] (Q') to node {} (R');

\node () [node distance=1cm, right of =P'] {$\Downarrow$};
\node () [node distance=1cm, right of =Q'] {$\Downarrow$};

\node (b') [node distance=0.5cm, right of=B'] {};
\node (p') [node distance=0.5cm, left of=P'] {};
\draw[|->, dashed] (b') to node {} (p');




\node (A'') [node distance=2.2cm, below of=A'] {$\cdot$};
\node (B'') [node distance=2.5cm, right of =A''] {$\cdot$};

\draw[->, bend left=40] (A'') to node {$\text{id}_1$} (B'');
\draw[->, bend right=40] (A'') to node [swap] {$\text{id}_1$} (B'');

\node () [node distance=1.25cm, right of =A''] {$\Downarrow v_2$};

\node (P'') [node distance=3cm, right of =B''] {$\cdot$};
\node (Q'') [right of =P''] {$\cdot$};

\draw[->, bend left=60] (P'') to node {} (Q'');
\draw[->] (P'') to node {} (Q'');
\draw[->, bend right=60] (P'') to node {} (Q'');

\node (x) [node distance=1cm, right of =P''] {};
\node () [node distance=0.3cm, above of =x] {$\Downarrow$};
\node () [node distance=0.35cm, below of =x] {$\Downarrow$};

\node (b'') [node distance=0.5cm, right of=B''] {};
\node (p'') [node distance=0.5cm, left of=P''] {};
\draw[|->, dashed] (b'') to node {} (p'');

\end{tikzpicture}
\end{center}
subject to the equations below.
\begin{enumerate}[i')]
\begin{minipage}{0.5\linewidth}
\item[\vspace{\fill}]
\item $h_2 \circ (\text{id}_2, \, i_2 \circ (i_1)) = \text{id}_2$
\item $h_2 \circ (i_2 \circ (i_1), \, \text{id}_2) = \text{id}_2$
\item $v_2 \circ (\text{id}_2, \, i_2) = \text{id}_2$
\item $v_2 \circ (i_2, \, \text{id}_2) = \text{id}_2$
\end{minipage}
\begin{minipage}{0.5\linewidth}
\item[\vspace{\fill}]
\item $h_2 \circ (\text{id}_2, \, h_2) = h_2 \circ (h_2, \, \text{id}_2)$
\item $v_2 \circ (\text{id}_2, \, v_2) = v_2 \circ (v_2, \, \text{id}_2)$
\item $h_2 \circ (v_2, \, v_2) = v_2 \circ (h_2, \, h_2)$
\item $h_2 \circ (i_2, \, i_2) = i_2 \circ (h_1)$
\end{minipage}
\end{enumerate}
\vspace{1mm}
\end{list}

In this example $i_1$ and $h_1$ are the 1-cell generators and the three equations they satisfy are the 1-cell relations. Similarly, $i_2$, $h_2$ and $v_2$ are the 2-cell generators and the eight equations they satisfy are the 2-cell relations. These equations mean that $\bs{T_3}$ contains exactly one $k$-cell for each $k$-cell in $\bs{1}^{\ast}$. It follows that the underlying collection map of $\bs{T_3}$ is the identity map on $\bs{1}^*$, so the definition here agrees with our earlier one up to isomorphism; see Lemma \ref{initial-terminal} and Example \ref{GlobOp T for strict n cats}.

An algebra for $\theta: \bs{A_{T_3}} \longrightarrow \bs{A}$ for $\bs{T_3}$ on a 3-globular set $\bs{A}$ equips $\bs{A}$ with the structure of a strict 3-category. The generators $i_1$ and $h_1$ provide 1-cell identities and binary composition of 1-cells, respectively: given 1-pasting diagrams
\begin{center}
\begin{tikzpicture}[node distance=2cm, auto]

\node (A) {$A$};
\node (B) [right of=A] {$A$};
\node (C) [right of=B] {$B$};
\node (D) [right of=C] {$C$};

\draw[->] (B) to node {$a$} (C);
\draw[->] (C) to node {$b$} (D);

\end{tikzpicture}
\end{center}
in $\bs{A}$ we write $\theta((A), i_1) = i_1(A) = 1_A$ and $\theta((a,b), h_1) = h_1(a,b) = ab$,
\begin{center}
\begin{tikzpicture}[node distance=2.2cm, auto]

\node (A) {$A$};
\node (B) [right of=A] {$A$};
\node (C) [right of=B] {$A$};
\node (D) [right of=C] {$C.$};

\draw[->] (A) to node {$1_A$} (B);

\draw[->] (C) to node {$ab$} (D);

\end{tikzpicture}
\end{center}
The relations ensure that the two identity axioms and the associativity axiom for 1-cell composition, respectively, are satisfied. For example, equation ii) yields the equality,
$$1_Aa = h_1 (i_1(A), \, \text{id}_1(a)) = (h_1 \circ (i_1,  \text{id}_1)) (a) = \text{id}_1(a) = a.$$
Similarly, the 2-cell generators $i_2$, $h_2$ and $v_2$ provide 2-cell identities, horizontal composition of 2-cells and vertical composition of 2-cells in $\bs{A}$, respectively. Equalities i') - iv') yield the four unit axioms, equalities v') and vi') yield the two associativity axioms, equality vii') yields the interchange law, and equality viii') yields the axiom stating that the horizontal composite of two identity cells is another identity cell. 
\end{example}

Observe that the generators in the presentation for $\bs{T_3}$ above correspond to the basic composition operations and coherence cells present in a strict 3-category, and the relations correspond to the axioms. Presentations of globular operads for other varieties of higher category can be build analogously. The next example is a presentation for the 3-globular operad for tricategories, or fully weak 3-categories. This presentation will require more generators than the one above, since there more kinds of coherence cells in weak higher categories than in strict ones, but will require no relations, since there are no axioms for composition of $k$-cells in tricategories for any $k<3$.

\begin{example}\label{W_3} 
The 3-globular operad $\bs{W_3}$ for tricategories is the  \textit{contractible} 3-globular operad with

\begin{list}{•}{}

\item a single 0-cell, the identity $\text{id}_0$; 

\item 1-cells consisting of the free operadic composites of 1-cells $i_1$ and $h_1$ whose images under the underlying collection map are as in the presentation for $\bs{T_3}$ above; and

\item 2-cells consisting of the free operadic composites of 2-cells $i_2$, $h_2$, $v_2$,  $l_2$, $l'_2$, $r_2$, $r'_2$, $a_2$ and $a'_2$ whose images under the underlying collection map are as follows

\begin{center}
\begin{tikzpicture}[node distance=2cm, auto]

\node (A) {$\cdot$};
\node (B) [node distance=2.5cm, right of =A] {$\cdot$};

\draw[->, bend left=40] (A) to node {$\text{id}_1$} (B);
\draw[->, bend right=40] (A) to node [swap] {$\text{id}_1$} (B);

\node () [node distance=1.25cm, right of =A] {$\Downarrow i_2$};

\node (P) [node distance=3cm, right of =B] {$\cdot$};
\node (Q) [right of =P] {$\cdot$};

\draw[->] (P) to node {} (Q);

\node (b) [node distance=0.5cm, right of=B] {};
\node (p) [node distance=0.5cm, left of=P] {};
\draw[|->, dashed] (b) to node {} (p);

\node (A') [node distance=2.3cm, below of=A] {$\cdot$};
\node (B') [node distance=2.5cm, right of =A'] {$\cdot$};

\draw[->, bend left=40] (A') to node {$h_1$} (B');
\draw[->, bend right=40] (A') to node [swap] {$h_1$} (B');

\node () [node distance=1.25cm, right of =A'] {$\Downarrow h_2$};

\node (P') [node distance=3cm, right of =B'] {$\cdot$};
\node (Q') [right of =P'] {$\cdot$};
\node (R') [right of =Q'] {$\cdot$};

\draw[->, bend left=40] (P') to node {} (Q');
\draw[->, bend right=40] (P') to node {} (Q');
\draw[->, bend left=40] (Q') to node {} (R');
\draw[->, bend right=40] (Q') to node {} (R');

\node () [node distance=1cm, right of =P'] {$\Downarrow$};
\node () [node distance=1cm, right of =Q'] {$\Downarrow$};

\node (b') [node distance=0.5cm, right of=B'] {};
\node (p') [node distance=0.5cm, left of=P'] {};
\draw[|->, dashed] (b') to node {} (p');

\end{tikzpicture}
\end{center}


\begin{center}
\begin{tikzpicture}[node distance=2cm, auto]

\node (A'') {$\cdot$};
\node (B'') [node distance=2.5cm, right of =A''] {$\cdot$};

\draw[->, bend left=40] (A'') to node {$\text{id}_1$} (B'');
\draw[->, bend right=40] (A'') to node [swap] {$\text{id}_1$} (B'');

\node () [node distance=1.25cm, right of =A''] {$\Downarrow v_2$};

\node (P'') [node distance=3cm, right of =B''] {$\cdot$};
\node (Q'') [right of =P''] {$\cdot$};

\draw[->, bend left=60] (P'') to node {} (Q'');
\draw[->] (P'') to node {} (Q'');
\draw[->, bend right=60] (P'') to node {} (Q'');

\node (x) [node distance=1cm, right of =P''] {};
\node () [node distance=0.3cm, above of =x] {$\Downarrow$};
\node () [node distance=0.35cm, below of =x] {$\Downarrow$};

\node (b'') [node distance=0.5cm, right of=B''] {};
\node (p'') [node distance=0.5cm, left of=P''] {};
\draw[|->, dashed] (b'') to node {} (p'');

\node (X) [node distance=2.4cm, below of=A''] {$\cdot$};
\node (Y) [node distance=2.5cm, right of =X] {$\cdot$};

\draw[->, bend left=40] (X) to node {$h_1 \circ (\text{id}_1, i_1)$} (Y);
\draw[->, bend right=40] (X) to node [swap] {$\text{id}_1$} (Y);

\node () [node distance=1.25cm, right of =X] {$\Downarrow l_2$};

\node (U) [node distance=3cm, right of =Y] {$\cdot$};
\node (V) [node distance=2cm, right of =U] {$\cdot$};

\draw[->] (U) to node {} (V);

\node (y) [node distance=0.5cm, right of=Y] {};
\node (u) [node distance=0.5cm, left of=U] {};
\draw[|->, dashed] (y) to node {} (u);

\node (X') [node distance=2.3cm, below of=X] {$\cdot$};
\node (Y') [node distance=2.5cm, right of =X'] {$\cdot$};

\draw[->, bend left=40] (X') to node {$\text{id}_1$} (Y');
\draw[->, bend right=40] (X') to node [swap] {$h_1 \circ (\text{id}_1, i_1)$} (Y');

\node () [node distance=1.25cm, right of =X'] {$\Downarrow l'_2$};

\node (U') [node distance=3cm, right of =Y'] {$\cdot$};
\node (V') [node distance=2cm, right of =U'] {$\cdot$};

\draw[->] (U') to node {} (V');

\node (y') [node distance=0.5cm, right of=Y'] {};
\node (u') [node distance=0.5cm, left of=U'] {};
\draw[|->, dashed] (y') to node {} (u');

\node (X'') [node distance=2.5cm, below of=X'] {$\cdot$};
\node (Y'') [node distance=2.5cm, right of =X''] {$\cdot$};

\draw[->, bend left=40] (X'') to node {$h_1 \circ (i_1, \text{id}_1)$} (Y'');
\draw[->, bend right=40] (X'') to node [swap] {$\text{id}_1$} (Y'');

\node () [node distance=1.25cm, right of =X''] {$\Downarrow r_2$};

\node (U'') [node distance=3cm, right of =Y''] {$\cdot$};
\node (V'') [node distance=2cm, right of =U''] {$\cdot$};

\draw[->] (U'') to node {} (V'');

\node (y'') [node distance=0.5cm, right of=Y''] {};
\node (u'') [node distance=0.5cm, left of=U''] {};
\draw[|->, dashed] (y'') to node {} (u'');

\node (X''') [node distance=2.3cm, below of=X''] {$\cdot$};
\node (Y''') [node distance=2.5cm, right of =X'''] {$\cdot$};

\draw[->, bend left=40] (X''') to node {$\text{id}_1$} (Y''');
\draw[->, bend right=40] (X''') to node [swap] {$h_1 \circ (i_1, \text{id}_1)$} (Y''');

\node () [node distance=1.25cm, right of =X'''] {$\Downarrow r'_2$};

\node (U''') [node distance=3cm, right of =Y'''] {$\cdot$};
\node (V''') [node distance=2cm, right of =U'''] {$\cdot$};

\draw[->] (U''') to node {} (V''');

\node (y''') [node distance=0.5cm, right of=Y'''] {};
\node (u''') [node distance=0.5cm, left of=U'''] {};
\draw[|->, dashed] (y''') to node {} (u''');

\node (N) [node distance=2.5cm, below of=X'''] {$\cdot$};
\node (M) [node distance=2.5cm, right of =N] {$\cdot$};

\draw[->, bend left=40] (N) to node {$h_1 \circ (\text{id}_1, h_1)$} (M);
\draw[->, bend right=40] (N) to node [swap] {$h_1 \circ (h_1, \text{id}_1)$} (M);

\node () [node distance=1.25cm, right of =N] {$\Downarrow a_2$};

\node (H) [node distance=3cm, right of =M] {$\cdot$};
\node (I) [node distance=2cm, right of =H] {$\cdot$};
\node (J) [node distance=2cm, right of =I] {$\cdot$};
\node (K) [node distance=2cm, right of =J] {$\cdot$};

\draw[->] (H) to node {} (I);
\draw[->] (I) to node {} (J);
\draw[->] (J) to node {} (K);

\node (m) [node distance=0.5cm, right of=M] {};
\node (h) [node distance=0.5cm, left of=H] {};
\draw[|->, dashed] (m) to node {} (h);

\node (N') [node distance=2.5cm, below of=N] {$\cdot$};
\node (M') [node distance=2.5cm, right of =N'] {$\cdot$};

\draw[->, bend left=40] (N') to node {$h_1 \circ (h_1, \text{id}_1)$} (M');
\draw[->, bend right=40] (N') to node [swap] {$h_1 \circ (\text{id}_1, h_1)$} (M');

\node () [node distance=1.25cm, right of =N'] {$\Downarrow a'_2$};

\node (H') [node distance=3cm, right of =M'] {$\cdot$};
\node (I') [node distance=2cm, right of =H'] {$\cdot$};
\node (J') [node distance=2cm, right of =I'] {$\cdot$};
\node (K') [node distance=2cm, right of =J'] {$\cdot$};

\draw[->] (H') to node {} (I');
\draw[->] (I') to node {} (J');
\draw[->] (J') to node {} (K');

\node (m') [node distance=0.5cm, right of=M'] {};
\node (h') [node distance=0.5cm, left of=H'] {};
\draw[|->, dashed] (m') to node {} (h');

\end{tikzpicture}
\end{center}
\end{list}

An algebra $\theta:\bs{A_{W_3}} \longrightarrow \bs{A}$ for $\bs{W_3}$ is a tricategory in the sense of \cite{NG} with underlying 3-globular set $\bs{A}$. The 2-cell generators for $\bs{W_3}$ not appearing in the presentation for $\bs{T_3}$, namely $l_2$, $r_2$, $a_2$,  $l'_2$, $r'_2$, and $a'_2$, provide the left unit, right unit, and associativity coherence 2-cells, and their (weak) inverses, respectively. Note that since a morphism of algebras for an $n$-globular operad is a morphism of the underlying $n$-globular sets strictly preserving composition of pasting diagrams, a morphism of algebras for $\bs{W_3}$ is a \textit{strict} 3-functor between tricategories. 
\end{example}

\begin{remark}
The 3-globular operad $\bs{W_3}$ satisfies the definition of contractibility in dimension 3 by construction; there exists a unique 3-cell $\chi:x \longrightarrow x'$ of $\bs{W_3}$ satisfying $w(\chi) = \tau$, where $w:\bs{W_3} \longrightarrow \bs{1}^*$ is the underlying collection map, for every parallel pair $(x,x')$ of 2-cells satisfying $w(x)=w(x') = \partial(\tau)$. However, in order for Example \ref{W_3} to make sense, we need to show that $\bs{W_3}$ is actually contractible, that is, we need to show that condition i) in Definition \ref{contractible n-glob ops} is also satisfied. This is done in \cite[\S 9.1]{Me}. 
\end{remark}

\section{Symmetric operads}\label{symmop section}

Given a higher category $\mathcal{A}$ with exactly one $l$-cell for all $l<k$, the $k$-cells of $\mathcal{A}$ form their own algebraic structure. For example, the 1-cells of a strict higher category with a single 0-cell form a monoid; the binary operation is given by 1-cell composition, and the identity element is given by the single identity 1-cell. Similarly, the 2-cells of a strict higher category with a single object $A$ and a single 1-cell, which is necessarily the identity $1_A$, form a commutative monoid. This follows directly from the Eckmann-Hilton argument, discussed in Example \ref{Eck-Hil Pres} and Remark \ref{Eck-Hil remark}. 

As mentioned in the introduction, a preprint of Michael Batanin \cite{batanin} conjectures that it should possible to take `slices' of globular operads, thereby isolating their algebraic structure in each dimension. Intuitively, the $k^{th}$ slice of a globular operad for some theory of higher category should be the operad describing the kind of algebras formed by the $k$-cells of those higher categories whenever they contain only one $l$-cell for all $l< k$. It follows that these slices cannot always be plain operads. For example, the theory of commutative monoids is not strongly regular, and therefore cannot be described by a plain operad; see Section \ref{T-ops and algebras subsection}. Instead, we need symmetric operads, which are able to capture all regular algebraic theories, rather than just the strongly regular ones.

\subsection{Plain vs Symmetric operads}\label{symm vs plain op section}

In this section we define symmetric operads and their algebras by generalising a reformulated definition of plain operads. 

\begin{definition} An endofunctor $A:\mathbf{Set} \longrightarrow \mathbf{Set}$ is \textit{strongly analytic} if it isomorphic to a functor of the form
\begin{center}
$A(X) = \displaystyle\sum_{n \geqslant 0} X^n \times A[n]$
\end{center}
where $A[n]$ is a set for each $n \geqslant 0$, and we define $X^0 = \{ \star \}$ to be a single element set. A \textit{morphism $A \longrightarrow A'$ of strongly analytic functors} consists of a function $A[n] \longrightarrow A'[n]$ for each $n \in \mathbb{N}$.
\end{definition}

\begin{example}\label{identity is strongly analytic} The identity functor on $\mathbf{Set}$ is the strongly analytic functor $I$ for which $I[1] = \{ \star \}$ and $I[n] = \emptyset$ for all $n \neq 1$.
\end{example}

Strongly analytic functors are closed under composition; given any two such functors $A$ and $B$ we have, 
\[ BA[n] = \displaystyle\sum_{m \geqslant 0} \ \displaystyle\sum_{k_1 + ... + k_m = n} B[m] \times  A[k_1] \times ... \times A[k_m].\]
Furthermore, a morphism $A \longrightarrow A'$ of strongly analytic functors defines a natural transformation $A \Rightarrow A'$, and the horizontal composite of two such transformations $A \Rightarrow A'$ and $B \Rightarrow B'$ is another morphism of strongly analytic functors. The category of strongly analytic functors is therefore a monoidal category with respect to composition. 

\begin{definition}\label{PlainOpAlt} The \textit{category $\mathbf{Plain Op}$ of plain operads} is the category of monoids in the monoidal category of strongly analytic functors.
\end{definition}

The definition of plain operads stated here is equivalent to the one in Section \ref{T-Op Section}; see Definition \ref{T-operad def} and Example \ref{plain op}. Given a plain operad $A = (A, ids, comp)$, we think of $A[n]$ as a set of abstract operations of arity $n$. From Example \ref{identity is strongly analytic}, we see that the identity map $ids:I \Rightarrow A$ is just a function $\{ \ast \} \longrightarrow A[1]$. We refer to the element $\text{id} \in A[1]$ specified by this function as the \textit{identity} element. Let $A^2$ denote the composite endofunctor $AA$, so the composition map $comp:A^2 \Rightarrow A$ consists of a function 
\begin{center}
\begin{tikzpicture}[node distance=4.5cm, auto]

\node (A) {$A[m] \times A[k_1] \times ... \times A[k_m]$};
\node (B) [right of=A] {$A[k_1 + ... + k_m]$};

\draw[->] (A) to node {} (B);

\node (X) [node distance=6mm, below of=A] {};
\node (X') [node distance=1cm, right of=X] {$(\rho, \rho_1,...,\rho_m)$};
\node (Y) [node distance=6mm, below of=B]  {$\rho \circ (\rho_1,...,\rho_m)$};

\draw[|->, dashed] (X') to node {} (Y);

\end{tikzpicture}
\end{center}
for each tuple $(m,k_1,...,k_m)$ of natural numbers. The monoid axioms mean that this composition satisfies the familiar associativity and identity axioms.

\begin{example}\label{Plain op for monoids} 
Recall the plain operad for monoids constructed at the end of Example \ref{plain op}. In the language of Definition \ref{PlainOpAlt}, this is the plain operad given by equipping the terminal analytic functor $T$, defined by $T[n] = \{ \star \}$ for all $n \geqslant 0$, with its unique monoid structure.
\end{example}

We now move to the more general notion of symmetric operad.

\begin{definition} An endofunctor $A:\mathbf{Set} \longrightarrow \mathbf{Set}$ is \textit{analytic} if it is isomorphic to a functor of the form
$$A(X) = \displaystyle\sum_{n \geqslant 0} X^n \times_{S_n} A[n]$$
where each $A[n]$ is set equipped with a left group action $S_n \times A[n] \longrightarrow A[n]$ and $X^n \times_{S_n} A[n] $ is the coequaliser of the diagram 
\begin{center}
\begin{tikzpicture}[node distance=3.5cm, auto]

\node (A) {$X^n \times S_n \times A[n] $};
\node (B) [right of=A] {$X^n \times A[n]$};

\draw[transform canvas={yshift=0.7ex},->] (A) to node {} (B);
\draw[transform canvas={yshift=-0.3ex},->] (A) to node [swap] {} (B);

\end{tikzpicture}
\end{center}
in $\mathbf{Set}$. The second parallel function here is the one arising from the canonical right group action $X^n \times S_n  \longrightarrow X^n$ sending a tuple $(x_1,...,x_n,t)$ to $(x_1,...,x_n) \cdot t = (x_{t^{-1}(1)},...,x_{t^{-1}(n)})$. So  $X^n \times_{S_n} A[n]$ is the product set  $X^n \times A[n]$ under the equivalence relation $(x_1,...,x_n, \, t \cdot \rho ) \sim ( x_{t^{-1}(1)},...,x_{t^{-1}(n)}, \, \rho )$.
\end{definition}

\begin{definition} 
A \textit{morphism $A \longrightarrow A'$ of analytic functors} consists of a function $A[n] \longrightarrow A'[n] $ for each $n \in \mathbb{N}$
preserving the group actions, that is, satisfying $t \cdot f(\rho) = f(t \cdot \rho)$ for each $t \in S_n$ and $\rho \in A[n]$.
\end{definition}

As in the strongly analytic case, a morphism $A \longrightarrow A'$ of analytic functors defines a natural transformation $A \Rightarrow A'$.

\begin{lemma}\cite[\S 2.1]{AJ}
Analytic functors are closed under composition, so the category of analytic functors is a monoidal category with respect to composition.
\end{lemma}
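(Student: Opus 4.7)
The plan is to establish closure under composition by explicitly computing the composite $BA(X)$ and identifying it as analytic, then deducing the monoidal structure formally.

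First, I would expand $A(X)^m$ using that finite products distribute over both coproducts and coequalisers in $\mathbf{Set}$. This gives
$$A(X)^m \;\cong\; \sum_{k_1, \ldots, k_m \geq 0} X^{k_1 + \ldots + k_m} \times_{S_{k_1} \times \ldots \times S_{k_m}} (A[k_1] \times \ldots \times A[k_m]),$$
where $S_{k_1} \times \ldots \times S_{k_m}$ sits inside $S_{k_1 + \ldots + k_m}$ as the block-diagonal subgroup. Substituting this into $B(A(X)) = \sum_{m \geq 0} A(X)^m \times_{S_m} B[m]$ yields a double coequaliser indexed over all $m$ and all tuples $(k_1,\ldots,k_m)$.

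Next, I would collect terms by total arity $n = k_1 + \ldots + k_m$. The outer $S_m$-action permutes the $m$ factors of $A(X)$, which on the $X$-variables acts via the block-permutation embedding $S_m \hookrightarrow S_n$ determined by the chosen $(k_1,\ldots,k_m)$ (with the tuple itself permuted correspondingly). The standard identity
$$X^n \times_H Y \;\cong\; X^n \times_{S_n} \bigl(S_n \times_H Y\bigr),$$
valid for any subgroup $H \leq S_n$ and any $H$-set $Y$ when $X^n$ carries the canonical $S_n$-action, then lets me absorb each inner coequaliser into the $S_n$-quotient. After combining with the outer $S_m$-quotient, the result takes the form
$$BA(X) \;\cong\; \sum_{n \geq 0} X^n \times_{S_n} (BA)[n],$$
with $(BA)[n]$ explicitly built out of the $B[m]$'s and the induced $S_n$-sets $\mathrm{Ind}_{S_{k_1} \times \ldots \times S_{k_m}}^{S_n}(A[k_1] \times \ldots \times A[k_m])$, equipped with the residual $S_m$-action that simultaneously permutes the factors $A[k_i]$ and the corresponding blocks of $S_n$.

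The unit is the identity functor, which is analytic with $I[1] = \{\star\}$ and $I[n] = \emptyset$ otherwise (the $S_1$-action being trivial). Both unit isomorphisms and the associator come from those for ordinary functor composition; one checks they preserve the analytic structure by unwinding the formula for $(BA)[n]$, after which pentagon and triangle coherence are inherited from the strict monoidal structure on endofunctors of $\mathbf{Set}$.

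The main obstacle is the bookkeeping of symmetric group actions: tracking how the $S_m$-action on the outer factors, the $S_{k_i}$-actions on the inner factors, and the implicit reordering of the $k_i$ under $S_m$ all assemble into a single $S_n$-action on $(BA)[n]$. This is essentially Joyal's substitution formula for species, and the cleanest strategy is to invoke the equivalence between analytic functors on $\mathbf{Set}$ and species \cite{AJ}, transferring the well-known monoidal structure on species (under substitution) across the equivalence rather than repeating the combinatorial argument in full.
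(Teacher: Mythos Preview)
The paper does not give its own proof of this lemma; it simply cites Joyal \cite[\S 2.1]{AJ} and moves on. Your sketch is the standard argument and is essentially correct: expanding $A(X)^m$, reindexing by total arity $n$, and using the induction identity $X^n \times_H Y \cong X^n \times_{S_n}(S_n \times_H Y)$ is exactly how one exhibits $(BA)[n]$ as an $S_n$-set. Your acknowledgement that the delicate point is tracking the simultaneous $S_m$-action on the tuple $(k_1,\ldots,k_m)$ and on the blocks is well placed --- this is where careless write-ups go wrong, since the ``block permutation'' depends on the tuple being permuted. Your final suggestion, to transfer the substitution monoidal structure on species across Joyal's equivalence rather than redo the combinatorics, is precisely what the paper's bare citation to \cite{AJ} amounts to, so in that sense your approach and the paper's coincide.
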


\begin{definition} The \textit{category $\mathbf{Symm Op}$ of symmetric operads} is the category of monoids in the monoidal category of analytic functors.
\end{definition}

Given a symmetric operad $A= (A, ids, comp)$, we continue to think of $A[n]$ as a set of abstract operations of arity $n$, but now the composition maps $A[m] \times A[k_1] \times ... \times A[k_m] \longrightarrow A[k_1 + ... + k_m]$ satisfy two additional equivariance axioms. First, for any composite $\rho \circ (\rho_1,...,\rho_m)$ and $t_i \in S_{k_i}$ we have
\begin{equation}\label{SymmOpEq1}
\rho \circ (t_1 \cdot \rho_1,...,t_m \cdot \rho_m) = t_1 \oplus ... \oplus t_m \cdot ( \rho \circ (\rho_1,...,\rho_m)) \tag{$\filledsquare$},
\end{equation}
where $t_1 \oplus ... \oplus t_m$ denotes the element of $S_{k_1 + ... + k_m}$ made up of disjoint cycles corresponding to the $t_i$s. Second, for any $t \in S_m$ we have 
\begin{equation}\label{SymmOpEq2}
(t \cdot \rho) \circ (\rho_{t(1)},...,\rho_{t(m)}) = t \cdot ( \rho \circ (\rho_1,...,\rho_m))  \tag{$\filledtriangleup$}
\end{equation}
where by abuse of notation the $t$ on the right hand side denotes the element of $S_{k_1 + ... + k_m}$ given by the block permutation corresponding to the element $t \in S_m$.
The structure provided by the identity map, as well as the equational associativity and identity laws induced by the monoid axioms, are the same as in plain operads.  

Observe that, as a monoid in the category of endofunctors on $\mathbf{Set}$, a symmetric operad is a kind of monad on $\mathbf{Set}$. 

\begin{definition} The \textit{category $A \mhyphen \mathbf{Alg}$ of algebras} for a symmetric operad $A$ is the category of monad algebras.
\end{definition}

An algebra for a symmetric operad $A$ on a set $X$ is then a function
\begin{center}
\begin{tikzpicture}[node distance=3.4cm, auto]

\node (A) {};
\node (B) [right of=A] {$X$};
\node (a) [node distance=1mm, below of=A] {$\underset{m \geqslant 0}{\mathlarger{\sum}} X^m \times_{S_m} A[m]$};
\node (a'') [node distance=1.3cm, right of=A] {};

\node () [node distance=2cm, left of=A] {$A(X)=$};

\draw[->] (a'') to node {} (B);

\node (X) [node distance=7mm, below of=A] { \hspace{4mm} $(x_1,...,x_m, \rho)$};
\node (Y) [node distance=4cm, right of=X]  {$\rho(x_1,...,x_m)$};

\draw[|->, dashed] (X) to node {} (Y);

\end{tikzpicture}
\end{center}
satisfying the same multiplication and unit axioms as the algebras for plain operads do. We note that this function imposes an equality 
$$(t \cdot \rho)(x_1,...,x_m) = \rho(x_{t^{-1}(1)},...,x_{t^{-1}(m)})$$
for any $t \in S_m$. We think of $\rho(x_1,...,x_m) \in X$ as the result of applying $\rho$ to the ordered list $(x_1,...,x_m)$ of elements of $X$, so this equality tells us that $t \cdot \rho \in A[m]$ should be thought of as the operation of arity $m$ given by first rearranging elements using $t$, and then applying $\rho$. We may visualise this by extending the tree diagram notation used to describe plain operads to the context of symmetric operads. For example, when $m=3$ and $t = (1 \ 2 \ 3) \in S_3$, the equation above can be represented as, 
\begin{center}
\begin{tikzpicture}[node distance=5cm, auto]

\node[circle,draw] (A) at (0,0) {$\rho$};

\draw(A) to (0,-1);
\draw(A) to (1,1);
\draw(A) to (0,1);
\draw(A) to (-1,1);

\node () at (-1.1,2.1) {$x_1$};
\node () at (0,2.1) {$x_2$};
\node () at (1.1,2.1) {$x_3$};

\draw(1,1) to (0,1.9);
\draw(0,1) to (-1,1.9);
\draw(-1,1) to (1,1.9);

\node () at (0,-1.2) {$(t \cdot \rho)(x_1,x_2,x_3)$};

\node[circle,draw] (B) at (4,0.3) {$\rho$};

\draw(B) to (4,-0.7);
\draw(B) to (5,1.3);
\draw(B) to (4,1.3);
\draw(B) to (3,1.3);

\node () at (2.9,1.5) {$x_3$};
\node () at (4,1.5) {$x_1$};
\node () at (5.1,1.5) {$x_2$};

\node () at (4,-0.9) {$\rho(x_3,x_1,x_2)$.};

\node(X) at (2,0.3) {$=$};

\end{tikzpicture}
\end{center}
The equivariance axioms \ref{SymmOpEq1} and \ref{SymmOpEq2} can also be expressed intuitively in the language of tree diagrams.

\begin{example}\label{SymmOp for sets} The identity monad on $\mathbf{Set}$ is the initial symmetric operad whose underlying analytic functor is the identity functor $I$; see Example \ref{identity is strongly analytic}.
\end{example}

The extra structure given by the group actions means that symmetric operads are able to encode all regular finitary algebraic theories, rather than just the \textit{strongly} regular ones, as demonstrated by our next example. 

\begin{example}\label{symm op for comm monoids} The free commutative monoid monad on $\mathbf{Set}$ is the terminal symmetric operad, which we denote by $C$. The underlying analytic functor is given by $C[n] = \{ \star \}$ for all $n \geqslant 0$. To see this, observe that
$$C(X) = \displaystyle\sum_{n \geqslant 0} X^n \times_{S_n} \{ \star \} \cong \displaystyle\sum_{n \geqslant 0} X^n /{\sim}$$
where $X^n /{\sim}$ is just the quotient set given by $(x_1,...,x_n) \sim (x_{t^{-1}(1)},...,x_{t^{-1}(n)})$ for any $t \in S_n$. It is now a straightforward exercise to show that an algebra for $C$ on a set $X$ is precisely a commutative monoid with underlying set $X$.
\end{example}

There is a canonical forgetful functor $\mathbf{Symm Op} \longrightarrow \mathbf{Plain Op}$ sending a symmetric operad to a plain operad by forgetting the group actions. For example, the value of the forgetful functor at the symmetric operad $C$ for commutative monoids is the plain operad $T$ for monoids, since $C[n] = T[n] = \{ \star \}$ for all $n \in \mathbb{N}$; see Examples \ref{Plain op for monoids} and \ref{symm op for comm monoids}. This functor is part of a free-forgetful adjunction,
\begin{center}
\begin{tikzpicture}[node distance=3.2cm, auto]

\node (A) {$\mathbf{Plain Op}$};
\node (B) [right of=A] {$\mathbf{Symm Op}$.};

\draw[->, bend left=35] (A) to node {} (B);
\draw[->, bend left=35] (B) to node {} (A);

\node (W) [node distance=1.6cm, right of=A] {$\perp$};

\end{tikzpicture}
\end{center}
The free symmetric operad $A_S$ on a plain operad $A$ is given by $A_S[n] = S_n \times A[n]$. Note that for any set $X$ we have
$$A_S(X) = \displaystyle\sum_{n \geqslant 0} X^n \times_{S_n} A_S[n] = \displaystyle\sum_{n \geqslant 0} X^n \times_{S_n} \big(S_n \times A[n]\big) \cong \displaystyle\sum_{n \geqslant 0} X^n \times A[n] = A(X)$$
so the free functor is really just an inclusion functor. We refer to the image of a plain operad $A$ under the inclusion functor as the \textit{symmetrisation} of $A$.

\subsection{Presentations for Symmetric Operads}\label{pres for symmop section}

Our goal is to be able to take the $k$-dimensional data of a globular operad and use it to identify a corresponding symmetric operad. Since we construct our globular operads by building presentations, we will need to do the same for symmetric operads. For this, we use the fact that presentations for algebraic structures can be defined in terms of free-forgetful adjunctions and coequalisers\footnote{This is how the theory of presentations for globular operads that we use here is defined formally in \cite{Me}}. For example, a group presentation is a set $J$ whose elements we call \textit{generators}, a set $R$ whose elements we call \textit{relations} and a pair of functions 
\begin{center}
\begin{tikzpicture}[node distance=2.5cm, auto]

\node (A) {$UF(J)$};
\node (B) [left of=A] {$R$};

\draw[transform canvas={yshift=0.6ex},->] (B) to node {$e$} (A);
\draw[transform canvas={yshift=-0.4ex},->] (B) to node [swap] {$q$} (A);

\end{tikzpicture}
\end{center}
where $F$ and $U$ are the adjoint free-forgetful functors $F \dashv U:\mathbf{Grp} \longrightarrow \mathbf{Set}$.
We say that $(J,R,e,q)$ is a presentation for a group $G$ if the coequaliser of the diagram
\begin{center}
\begin{tikzpicture}[node distance=2.8cm, auto]

\node (A) {$F(J)$};
\node (B) [left of=A] {$F(R)$};

\draw[transform canvas={yshift=0.7ex},->] (B) to node {$e$} (A);
\draw[transform canvas={yshift=-0.3ex},->] (B) to node [swap] {$q$} (A);

\end{tikzpicture}
\end{center}
is isomorphic to $G$. We now describe the analogous adjunction for symmetric operads.

Recall the free functor $T \mhyphen \mathbf{Coll} \longrightarrow T \mhyphen \mathbf{Op}$ that is left adjoint to the canonical forgetful functor for finitary cartesian monads $T$ on presheaf categories defined in Theorem \ref{T-op adjunction}. When $T$ is the free monoid monad on $\mathbf{Set}$, this gives the adjunction
\begin{center}\label{free plain collection adjunction}
\begin{tikzpicture}[node distance=3cm, auto]

\node (A) {$\mathbf{Set} / \mathbb{N}$};
\node (B) [right of=A] {$\mathbf{Plain Op}$};

\draw[->, bend left=35] (A) to node {} (B);
\draw[->, bend left=35] (B) to node {} (A);

\node (W) [node distance=1.5cm, right of=A] {$\perp$};

\end{tikzpicture}
\end{center}
for which the right adjoint is the functor sending a plain operad $A$ to its underlying collection $a:\sum A[n] \longrightarrow \mathbb{N}$, where $a^{-1}(n) = A[n]$ for each $n \in \mathbb{N}$. As shown in Example \ref{free plain operads}, the left adjoint sends a collection $c:C \longrightarrow \mathbb{N}$ to the plain operad $C^{\infty}$ whose elements are the free operadic composites of the elements of $C$. Recall that every non-identity element of $C^{\infty}$ can be expressed in the form $Q \circ (\rho_1,...,\rho_m)$ where $Q$ is some smaller operadic composite, $\rho_i = \text{id}$ or $\rho_i \in C$, and $\rho_i \neq \text{id}$ for at least one $i$. 
The element $Q \circ (\rho_1,...,\rho_m)$ has arity $c(\rho_1)+...+c(\rho_m)$, where we define $c(\text{id}) = 1$, so 
$$Q \circ (\rho_1,...,\rho_m) \in C^{\infty}\big[c(\rho_1)+...+c(\rho_m)\big].$$

\begin{definition}\label{free symmetric operad adjunction}
The \textit{free-forgetful adjunction} $F \dashv U : \mathbf{Symm Op} \longrightarrow \mathbf{Set}/\mathbb{N}$ is given by composing the adjunction between $\mathbf{PlainOp}$ and $\mathbf{SymmOp}$ described at the end of previous section with the adjunction between $\mathbf{Set}/\mathbb{N}$ and $\mathbf{PlainOp}$ above. The value of the free functor at a collection $c:C \longrightarrow \mathbb{N}$ is the symmetric operad $F(C)$ given by 
$$F(C)[n] = S_n \times C^{\infty}[n].$$
\end{definition} 

We use this adjunction to define presentations for symmetric operads. 

\begin{definition} A \textit{presentation $P=(J,R,e,q)$ for a symmetric operad} consists of a pair of collections $j:J \longrightarrow \mathbb{N}$ and $r:R \longrightarrow \mathbb{N}$ together with a pair of parallel morphisms 
\begin{center}
\begin{tikzpicture}[node distance=2.6cm, auto]

\node (A) {$R$};
\node (B) [right of=A] {$UF(J)$};
\node (C) [node distance=1.9cm, right of=B] {};
\node () [node distance=1mm, below of=C] {$= \underset{n \geqslant 0}{\mathlarger{\sum}} S_n \times J^{\infty}[n]$};

\draw[transform canvas={yshift=0.6ex},->] (A) to node {$e$} (B);
\draw[transform canvas={yshift=-0.4ex},->] (A) to node [swap] {$q$} (B);

\end{tikzpicture}
\end{center}
in $\mathbf{Set}/\mathbb{N}$, or equivalently, functions $e_n, q_n : r^{-1}(n) \longrightarrow S_n \times J^{\infty}[n]$ for each $n \geqslant 0$. We refer to the elements of $J$ as the \textit{generators} and to the elements of $R$ as the \textit{relations}.
\end{definition}

It is shown in \cite[\S 1.3]{IW} that the category $\mathbf{SymmOp}$ of symmetric operads is complete and cocomplete, allowing for the following definition.

\begin{definition} Let $A$ be a symmetric operad. A \textit{presentation for} $A$ is a presentation $P=(J,R,e,q)$ for which the coequaliser of the following diagram in $\mathbf{SymmOp}$ is isomorphic to $A$
\begin{center}
\begin{tikzpicture}[node distance=2.8cm, auto]

\node (A) {$F(R)$};
\node (B) [right of=A] {$F(J)$.};

\draw[transform canvas={yshift=0.75ex},->] (A) to node {$e$} (B);
\draw[transform canvas={yshift=-0.25ex},->] (A) to node [swap] {$q$} (B);

\end{tikzpicture}
\end{center}
\end{definition}

\begin{example}\label{Presentation for I}
The free symmetric operad on the empty collection is the symmetric operad $I$ for sets; see Example \ref{SymmOp for sets}. This follows immediately from the fact that left adjoints preserve colimits (and in particular, initial objects), but is also easy to show directly. We may therefore build a presentation for $I$ by setting $J = R = \emptyset$, so both $e$ and $q$ must be the unique map from the empty collection,
\begin{center}
\begin{tikzpicture}[node distance=3.2cm, auto]

\node (A) [transform canvas={yshift=0.2ex}] {$\emptyset$};
\node (B) [right of=A] {$UF(\emptyset) = U(I)$.};

\draw[transform canvas={yshift=0.7ex},->] (A) to node {} (B);
\draw[transform canvas={yshift=-0.3ex},->] (A) to node [swap] {} (B);

\end{tikzpicture}
\end{center}
The diagram in $\mathbf{Set} / \mathbb{N}$ above is equivalently the diagram
\begin{center}
\begin{tikzpicture}[node distance=2cm, auto]

\node (A) {$I$};
\node (B) [right of=A] {$I$};
\node (C) [node distance=7.5mm, left of=A] {$F(\emptyset) = $};

\draw[transform canvas={yshift=0.5ex},->] (A) to node {} (B);
\draw[transform canvas={yshift=-0.5ex},->] (A) to node [swap] {} (B);

\end{tikzpicture}
\end{center}
in $\mathbf{SymmOp}$ under adjunction. The coequaliser of this diagram is trivially $I$, so $(\emptyset, \emptyset, !, !)$ is a presentation for the symmetric operad $I$ for sets.
\end{example}

\begin{example}\label{symm op for magmas} A presentation for the symmetric operad for magmas is given by $(J, \emptyset, !, !)$ where $J = \{ b \}$ is a single element set and $j:J \longrightarrow \mathbb{N}$ is defined by $j(b) = 2$. Then we have the diagram
\begin{center}
\begin{tikzpicture}[node distance=2.5cm, auto]

\node (A) [transform canvas={yshift=0.3ex}] {$I$};
\node (B) [right of=A] {$F(J)$};

\draw[transform canvas={yshift=0.75ex},->] (A) to node {} (B);
\draw[transform canvas={yshift=-0.25ex},->] (A) to node [swap] {} (B);

\end{tikzpicture}
\end{center}
in $\mathbf{SymmOp}$. Since $I$ is initial, the coequaliser of this diagram is the free symmetric operad $F(J)$, which may be regarded as (the symmetrisation of) the free \textit{plain} operad  $J^{\infty}$ on the collection $J \longrightarrow \mathbb{N}$. As we saw in Example \ref{free op for magmas}, an algebra for this operad on a set $X$ is precisely a magma with underlying set $X$.
\end{example}

\begin{notation} We denote by 0 the identity element of the symmetric group $S_n$.
\end{notation}

\begin{example}\label{pres for comm monoids} A presentation for the terminal symmetric operad for commutative monoids described in Example \ref{symm op for comm monoids} is given by the set $J = \{i,b\}$ of generators, where $j(i) = 0$ and $j(b) = 2$, and the set $R = \{ u, v, a, c \}$ of relations, where $r(u)=r(v)=1$, $r(a) = 3$ and $r(c) = 2$, together with morphisms 
\begin{center}
\begin{tikzpicture}[node distance=3.1cm, auto]

\node (A) [transform canvas={yshift=0.2ex}] {$R$};
\node (B) [right of=A] {};
\node (C) [node distance=2cm, right of=A] {};
\node () [node distance=0.1cm, below of=B] {$\underset{n \geqslant 0}{\mathlarger{\sum}} S_n \times J^{\infty}[n]$};

\draw[transform canvas={yshift=0.7ex},->] (A) to node {$e$} (C);
\draw[transform canvas={yshift=-0.3ex},->] (A) to node [swap] {$q$} (C);

\end{tikzpicture}
\end{center}
in $\mathbf{Set} / \mathbb{N}$ defined as follows.
\vspace{1mm}
\begin{enumerate}[$\bullet$]
\begin{minipage}{0.265\linewidth}
\item
$e(u) = ( 0, \, b \circ (\text{id},i) )$\\
$q(u) = ( 0, \, \text{id})$
\end{minipage}
\begin{minipage}{0.265\linewidth}
\item
$e(v) = ( 0, \, b \circ (i,\text{id}) )$\\ 
$q(v) = ( 0, \, \text{id} )$
\end{minipage}
\begin{minipage}{0.265\linewidth}
\item
$e(a) =( 0, \, b \circ (\text{id},b) )$\\
$q(a) = ( 0, \, b \circ (b, \text{id}) )$
\end{minipage}
\begin{minipage}{0.265\linewidth}
\item
$e(c) = ( 0, \, b )$\\ 
$q(c) = ( (1 \ 2), \, b )$
\end{minipage}
\end{enumerate}
\vspace{1mm}
The coequaliser of the diagram 
\begin{center}
\begin{tikzpicture}[node distance=2.7cm, auto]

\node (A) {$F(R)$};
\node (C) [right of=A] {$F(J)$};

\draw[transform canvas={yshift=0.7ex},->] (A) to node {$e$} (C);
\draw[transform canvas={yshift=-0.3ex},->] (A) to node [swap] {$q$} (C);

\end{tikzpicture}
\end{center}
in $\mathbf{SymmOp}$ is then the symmetric operad $C$ whose elements are the free operadic composites of elements $i \in C[0]$ and $b \in C[2]$, subject to the equalities below.
\vspace{1mm}
\begin{enumerate}[i)]
\begin{minipage}{0.25\linewidth}
\item $b \circ (\text{id},i) = \text{id}$
\end{minipage}
\begin{minipage}{0.25\linewidth}
\item $b \circ (i,\text{id}) = \text{id} $ 
\end{minipage}
\begin{minipage}{0.32\linewidth}
\item $b \circ (\text{id},b) =  b \circ (b, \text{id})$
\end{minipage}
\begin{minipage}{0.2\linewidth}
\item $b = (1 \ 2) \cdot b$ 
\end{minipage}
\end{enumerate}
\vspace{1mm}

An algebra for $C$ on a set $X$ is precisely a commutative monoid with underlying set $X$. The generator $b$ provides a binary operation on the elements of $X$, as in the previous example, while $i$ provides an identity element. Operadic composites of the generators $i$ and $b$ provide operations derived from these basic ones. For instance, if we write $b(x_1,x_2) = x_1x_2$, then given elements $x_1, x_2, x_3 \in X$ we have,
$$\big((1 \ 2 \ 3) \cdot (b \circ (\text{id}, b) ) \big) \, (x_1, x_2, x_3) = (b \circ (\text{id}, b)) \, (x_3,x_1,x_2) = b \, (\text{id}(x_3), \, b(x_1,x_2)) = x_3(x_1x_2).$$
Equalities i) - iv) yield left and right unit axioms, an associativity axiom, and a commutativity axiom, respectively. For example, by the fourth equality we have,
$$x_1x_2 = b(x_1,x_2) = ( (1 \ 2) \cdot b ) \, (x_1,x_2) = b(x_2,x_1) = x_2x_1.$$
\end{example}

From this point onwards, we will define symmetric operads by specifying the generators and relations of a presentation. As in the example above, the relations will be expressed as a list of equations between operadic composites of the generators. Below is an example of a presentation for the operad for monoids.

\begin{example}\label{pres for monoids} The operad $T$ for monoids consists of the operadic composites of elements $i \in T[0]$ and $b \in T[2]$ subject to the following equalities.
\vspace{1mm}
\begin{enumerate}[i)]
\begin{minipage}{0.3\linewidth}
\item $b \circ (\text{id},i) = \text{id}$
\item $b \circ (i,\text{id}) = \text{id} $
\item $b \circ (\text{id},b) =  b \circ (b, \text{id})$
\end{minipage}
\end{enumerate}
Observe that this is the same as the presentation for the symmetric operad $C$ for commutative monoids with a single exception - the relation yielding a commutativity axiom in the algebras for $C$. It follows that an algebra for $T$ is precisely a (not necessarily commutative) monoid.
\end{example}

The next lemma shows that it is sometimes possible to use a presentation for a symmetric operad to determine whether it is just (the symmetrisation of) a plain operad.

\begin{lemma}\label{prop - presentations for plain} Let $P=(J,R,e,q)$ be a presentation for a symmetric operad $A$. If for each $n \geqslant 0$ the functions $e_n, q_n:r^{-1}(n) \longrightarrow S_n \times J^{\infty}[n]$ are of the form $\langle 0, f_n \rangle$ and $\langle 0, g_n \rangle$, respectively, where $0:r^{-1}(n) \longrightarrow S_n$ denotes the constant function at the identity element $0$ of $S_n$, and $f_n$ and $g_n$ are functions $r^{-1}(n) \longrightarrow J^{\infty}[n]$, then $A$ is isomorphic to the symmetrisation of a plain operad.
\end{lemma}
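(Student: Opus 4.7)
The plan is to exhibit $A$ as the image under the symmetrisation functor of a coequaliser formed in $\mathbf{Plain Op}$. Write $(-)^{\infty} : \mathbf{Set}/\mathbb{N} \longrightarrow \mathbf{Plain Op}$ for the free plain operad functor of Example \ref{free plain operads}, and $\Sigma : \mathbf{Plain Op} \longrightarrow \mathbf{Symm Op}$ for the symmetrisation functor described at the end of Section \ref{symm vs plain op section}, so that $F = \Sigma \circ (-)^{\infty}$ and $U = U_P \circ V$, where $V: \mathbf{Symm Op} \longrightarrow \mathbf{Plain Op}$ forgets the symmetric group actions and $U_P : \mathbf{Plain Op} \longrightarrow \mathbf{Set}/\mathbb{N}$ is the plain analogue of $U$. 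The component at a plain operad $C$ of the unit of $\Sigma \dashv V$ has collection-level map $C[n] \longrightarrow S_n \times C[n]$ given by $\rho \mapsto (0, \rho)$.

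The hypothesis says precisely that each $e_n$ and $q_n$ factors through $\{0\} \times J^{\infty}[n] \subset S_n \times J^{\infty}[n]$, which is the image of this unit at $J^{\infty}$. Extracting second coordinates therefore yields morphisms of collections $\tilde{e}, \tilde{q} : R \longrightarrow J^{\infty}$ with components $\tilde{e}_n = f_n$ and $\tilde{q}_n = g_n$. Transposing under $(-)^{\infty} \dashv U_P$ gives parallel morphisms $\tilde{e}^{\sharp}, \tilde{q}^{\sharp} : R^{\infty} \rightrightarrows J^{\infty}$ in $\mathbf{Plain Op}$. Since $\mathbf{Plain Op}$ is cocomplete \cite[\S 1.3]{IW} (as a special case of $\mathbf{Symm Op}$, or directly), their coequaliser $B$ exists.

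Being a left adjoint, $\Sigma$ preserves coequalisers, so $\Sigma B$ is the coequaliser in $\mathbf{Symm Op}$ of $\Sigma \tilde{e}^{\sharp}, \Sigma \tilde{q}^{\sharp} : F(R) \rightrightarrows F(J)$. It then suffices to identify $\Sigma \tilde{e}^{\sharp}$ with $e^{\sharp}$ and $\Sigma \tilde{q}^{\sharp}$ with $q^{\sharp}$, where $e^{\sharp}, q^{\sharp}$ are the transposes of $e, q$ under $F \dashv U$. Chaining the two adjunctions, the transpose of $\Sigma \tilde{e}^{\sharp}$ under $F \dashv U$ unfolds to the composite $R \xrightarrow{\tilde{e}} J^{\infty} \xrightarrow{(0,-)} UF(J)$, which by the definition of $\tilde{e}$ coincides pointwise with $e$. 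The analogous calculation for $q$ then gives $\Sigma B \cong A$, exhibiting $A$ as the symmetrisation of the plain operad $B$.

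The main obstacle I expect is the bookkeeping in the last step: tracking naturality through the factorisation $F = \Sigma \circ (-)^{\infty}$ carefully enough to confirm that $\Sigma \tilde{e}^{\sharp}$ really is $e^{\sharp}$ on the nose. Everything else reduces formally to the hypothesis, the factorisation of the free functor, and the cocontinuity of the left adjoint $\Sigma$; in particular no arity-by-arity computation with the individual operad composition is required.
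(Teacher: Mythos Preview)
Your argument is correct. The paper states this lemma without proof, so there is no approach to compare against; the route you take---factoring $F$ as $\Sigma \circ (-)^{\infty}$, lifting the relations to $\mathbf{PlainOp}$ via the hypothesis, forming the coequaliser there, and pushing forward along the left adjoint $\Sigma$---is the natural categorical one and exactly what one would expect the author has in mind.

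Your concern about the bookkeeping in the final step is unfounded: it is a direct instance of how transposes behave under a composite adjunction. With $e = U_P(\eta_{J^{\infty}}) \circ \tilde{e}$, naturality of the hom-set bijection for $(-)^{\infty} \dashv U_P$ gives that the transpose of $e$ is $\eta_{J^{\infty}} \circ \tilde{e}^{\sharp} : R^{\infty} \to V\Sigma J^{\infty}$, and then under $\Sigma \dashv V$ the transpose of $\eta_{J^{\infty}}$ is the identity, so naturality in the domain yields $e^{\sharp} = \Sigma \tilde{e}^{\sharp}$ on the nose. The only point worth a half-sentence of justification is the cocompleteness of $\mathbf{PlainOp}$, which follows either from the general theory of $T$-operads in \cite{TL} or, as you note, as a special case of \cite{IW}.
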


In practice, this means that when have a presentation for a symmetric operad for which no non-identity elements of $S_n$ appear in the list of relations, the resulting operad is a plain operad. For instance, we can see from our last example that the operad for monoids is plain. We provide another example below.

\begin{example}\label{symm op for double monoids} A double monoid with shared unit is a set equipped with two monoid structures sharing the same identity element. The corresponding symmetric operad consists of the operadic composites of elements $i \in D[0]$ and $b, b' \in D[2]$, subject to the equalities below. 
\begin{enumerate}[i)]
\begin{minipage}{0.4\linewidth}
\item[\vspace{\fill}]
\item $b \circ (\text{id},i) = \text{id}$
\item $b \circ (i,\text{id}) =  \text{id}$
\item $b \circ (\text{id},b) = b \circ (b, \text{id})$
\end{minipage}
\begin{minipage}{0.4\linewidth}
\item[\vspace{\fill}]
\item $b' \circ (\text{id},i) = \text{id}$
\item $b' \circ (i,\text{id}) = \text{id}$
\item $b' \circ (\text{id},b') =b' \circ (b', \text{id})$
\end{minipage}
\end{enumerate}
\end{example}

Each symmetric operad has many presentations. To end this section, we provide an example of another presentation for the commutative monoid operad constructed using the Eckmann-Hilton argument, which states that given a set $X$ equipped with two unital binary operations $\diamond$ and $\smallsquare$ satisfying 
\begin{equation} \label{EckHil}
(w \smallsquare x) \diamond (y \smallsquare z) = (w \diamond y) \smallsquare (x \diamond z) \tag{$\star$}
\end{equation}
for all $w,x,y,z \in X$, the operations $\smallsquare$ and $\diamond$ are equal, and are associative and commutative. In other words, such a structure is just a commutative monoid.

\begin{example}\label{Eck-Hil Pres} The symmetric operad $C$ for commutative monoids consists of the operadic composites of elements $i,i' \in C[0]$ and $b,b' \in C[2]$, subject to the following equations.
\begin{enumerate}[i)]
\begin{minipage}{0.3\linewidth}
\item[\vspace{\fill}]
\item $b \circ (\text{id},i) = \text{id}$
\item $b \circ (i,\text{id}) = \text{id}$
\end{minipage}
\begin{minipage}{0.3\linewidth}
\item[\vspace{\fill}]
\item $b' \circ (\text{id},i') = \text{id}$
\item $b' \circ (i',\text{id}) = \text{id}$
\end{minipage}
\begin{minipage}{0.3\linewidth}
\item[\vspace{\fill}]
\item $b \circ (b',b') = (2 \ 3) \cdot ( b' \circ (b,b)) $
\item [\vspace{\fill}]
\end{minipage}
\end{enumerate}
Given an algebra for this operad on a set $X$, the generators $i$ and $i'$ each pick out a single element of $X$, and $b$ and $b'$ each provide a binary operation. Equations i) and ii) above state that the element specified by $i$ is an identity for the binary operation provided by $b$. Likewise, equations iii) and iv) state that the element specified by $i'$ is an identity for the binary operation provided by $b'$. The final equation yields an axiom corresponding to equation \eqref{EckHil} above. 

To see that this is the free commutative monoid operad, we have to show that 
$i = i'$ (the identities coincide),
$b = b'$ (the binary operations are equal),
$b = (1 \ 2) \cdot b$ (the binary operation is commutative), and
$b \circ (\text{id}, b) = b \circ (b, \text{id})$ (the binary operation is associative).
These equations can be seen to hold by the arguments outlined below, each of which makes use of the second equivariance axiom (\ref{SymmOpEq2}). To begin, note that for any $a \in C[n]$ we have,
$$a = \text{id} \circ (a) = (b \circ (\text{id},i)) \circ (a) = b \circ (\text{id} \circ (a),i) = b \circ (a,i)$$
and similarly, $a = b \circ (i,a)$, $a = b' \circ (a,i')$, and $a = b' \circ (i',a)$. Then,
\begin{align*}
i = b \circ \big( b' \circ (i',i), \, b' \circ (i,i') \big) =  \big( b \circ (b',b') \big)  \circ (i',i,i,i') &= \big( (2 \ 3) \cdot \big( b' \circ (b,b) \big) \big) \circ (i',i,i,i') \\
&= \big( b' \circ (b,b) \big) \circ (i'i,i,i') \\
&= \big( b' \circ (b \circ (i',i),b \circ (i,i')) \big) \\
&= i'.
\end{align*}
Now, using the fact that $i=i'$ we can similarly show that $b = b'$, which we can use in turn to show that $b = (1 \ 2) \cdot  b$, and finally that $b \circ (\text{id},b) = b \circ (b, \text{id})$.
\end{example}

\begin{remark}\label{Eck-Hil remark}
The Eckmann-Hilton argument shows that the 2-cells of a strict 2-category with a single 0-cell and a single 1-cell form a commutative monoid. Horizontal and vertical composition each define binary operations on the 2-cells, and the single identity 2-cell is a unit for both of these operations. The interchange law tells us that these operations satisfy Equation (\ref{EckHil}). More generally, by iterating the Eckmann-Hilton argument, we see that for $k \geqslant 2$, the $k$-cells of a strict higher category with a single $l$-cell for all $l<k$ form a commutative monoid.
\end{remark}

\section{Slices}\label{slices section}

In this section we provide a concrete definition of slices for globular operads in terms of presentations. Since each globular operad has many presentations, we show in Theorem \ref{slices don't depend on presentations} that the slices of a globular operad $\bs{G}$ do not depend on the choice of presentation for $\bs{G}$. While the statement of our definition requires several technical results and some additional machinery, it is fairly intuitive to apply in practice. Throughout Section \ref{slices subsection}, we use the example of the presentation for the globular operad for strict 3-categories given in Example \ref{pres for T_3} to illustrate the ideas, and show in Example \ref{slices of T_3} that the first slice of this operad is the plain operad for monoids, while the $k^{th}$ slice for all $k>1$ is the symmetric operad for commutative monoids, as desired. In Section \ref{examples of slices subsection}, we provide several additional examples for various theories of higher category.

\subsection{The Definition}\label{slices subsection} Our goal is to turn the $k$-cell generators and relations of a presentation for a globular operad into the generators and relations of a presentation for a symmetric operad. We begin by examining the structure of pasting diagrams in globular sets.

\begin{definition}\label{k-ball} Let $n$ be natural number. The $n$-\textit{ball} $\bs{B_n}$ is the globular set

\begin{center}
\begin{tikzpicture}[node distance=2.3cm, auto]

\node (A) {$\{0,1\}$};
\node (C) [left of=A] {$. . .$};
\node (D) [left of=C] {$\{0,1\}$};
\node (E) [left of=D] {$ \{ \filledstar \}$};
\node (F) [node distance=2.1cm, left of=E] {$\emptyset$};
\node (G) [node distance=2cm, left of=F] {$\emptyset$};
\node (H) [node distance=2cm, left of=G] {$. . .$};

\draw[transform canvas={yshift=0.5ex},->] (C) to node {\small ${0}$} (A);
\draw[transform canvas={yshift=-0.5ex},->] (C) to node [swap] {\small ${1}$} (A);

\draw[transform canvas={yshift=0.5ex},->] (D) to node {\small ${0}$} (C);
\draw[transform canvas={yshift=-0.5ex},->] (D) to node [swap] {\small ${1}$} (C);

\draw[transform canvas={yshift=0.5ex},->] (E) to node {\small ${0}$} (D);
\draw[transform canvas={yshift=-0.5ex},->] (E) to node [swap] {\small ${1}$} (D);

\draw[transform canvas={yshift=0.5ex},->] (F) to node {} (E);
\draw[transform canvas={yshift=-0.5ex},->] (F) to node [swap] {} (E);

\draw[transform canvas={yshift=0.5ex},->] (G) to node {} (F);
\draw[transform canvas={yshift=-0.5ex},->] (G) to node [swap] {} (F);

\draw[transform canvas={yshift=0.5ex},->] (H) to node {} (G);
\draw[transform canvas={yshift=-0.5ex},->] (H) to node [swap] {} (G);

\end{tikzpicture}
\end{center}
consisting of a single $n$-cell. Here the arrows labeled $0$ and $1$ represent the constant functions.
\end{definition}

\begin{definition}\label{arity function}
For each $n \in \mathbb{N}$ we define a function $| - |_n:\mathbf{GSet}(\bs{B_n}, \bs{1}^*) \longrightarrow \mathbb{N}$ from the set of $n$-pasting diagrams in the terminal globular set $\bs{1}$; see Notation \ref{TerminalGlobSet}. Given an $n$-pasting diagram $\pi$ in $\bs{1}$, $| \pi |_n$ is the number of copies of the unique $n$-cell of $\bs{1}$ contained in $\pi$. 
\end{definition}

We omit the subscript on $| - |_n$ when it can be understood from context.

\begin{example}\label{arity function examples}
For the following 2- and 3-pasting diagrams we have $|\tau|_2=2$ and $|\Psi|_3 = 10$, respectively.
\begin{center}
\resizebox{1\textwidth}{!}{
\begin{tikzpicture}[node distance=3.4cm, auto]

\node (a) {$\cdot$};
\node (b) [node distance=2.4cm, right of=a] {$\cdot$};
\node (c) [node distance=2.2cm, right of=b] {$\cdot$};
\node () [node distance=0.5cm, left of=a] {$\tau \, =$};

\draw[->, bend left=60] (a) to node {} (b);
\draw[->] (a) to node [swap] {} (b);
\draw[->, bend right=60] (a) to node [swap] {} (b);

\node (x) [node distance=1.2cm, right of=a] {};
\node () [node distance=3.5mm, above of=x] {$\Downarrow $};
\node () [node distance=3.5mm, below of=x] {$\Downarrow $};

\draw[->] (b) to node {} (c);

\node (A) [node distance=1.7cm, right of=c] {$\cdot$};
\node (B) [right of=A] {$\cdot$};
\node (C) [right of=B] {$\cdot$};
\node (D) [right of=C] {$\cdot$};
\node () [node distance=0.5cm, left of=A] {$\Psi \, =$};

\draw[->, bend left=80] (A) to node {} (B);
\draw[->] (A) to node [swap] {} (B);
\draw[->, bend right=80] (A) to node [swap] {} (B);

\node (X') [node distance=1.05cm, right of=A] {};
\node (X'') [node distance=1.7cm, right of=A] {};
\node (X''') [node distance=2.35cm, right of=A] {};
\node (Y') [node distance=10.5mm, above of=X'] {};
\node (Y'') [node distance=11.5mm, above of=X''] {};
\node (Y''') [node distance=10.5mm, above of=X'''] {};

\draw[->, bend right=20] (Y') to node [swap] {} (X');
\draw[->] (Y'') to node {} (X'');
\draw[->, bend left=20] (Y''') to node {} (X''');

\node (Z) [node distance=0.55cm, above of=X''] {};
\node() [node distance=0.38cm, left of=Z] {$\Rrightarrow$};
\node() [node distance=0.38cm, right of=Z] {$\Rrightarrow$};

\node (P) [node distance=9mm, right of=A] {};
\node (P') [node distance=15mm, right of=A] {};
\node (P'') [node distance=19mm, right of=A] {};
\node (P''') [node distance=25mm, right of=A] {};
\node (Q) [node distance=9.5mm, below of=P] {};
\node (Q') [node distance=11mm, below of=P'] {};
\node (Q'') [node distance=11mm, below of=P''] {};
\node (Q''') [node distance=9.5mm, below of=P'''] {};

\draw[->, bend right=22] (P) to node [swap] {} (Q);
\draw[->, bend right=18] (P') to node {} (Q');
\draw[->, bend left=18] (P'') to node {} (Q'');
\draw[->, bend left=22] (P''') to node {} (Q''');

\node (R) [node distance=0.55cm, below of=X''] {$\Rrightarrow$};
\node() [node distance=0.6cm, left of=R] {$\Rrightarrow$};
\node() [node distance=0.6cm, right of=R] {$\Rrightarrow$};

\draw[->, bend left=50] (B) to node {} (C);
\draw[->, bend right=50] (B) to node {} (C);

\node (E) [node distance=1.1cm, right of=B] {};
\node (E') [node distance=1.7cm, right of=B] {};
\node (E'') [node distance=2.3cm, right of=B] {};
\node (F) [node distance=0.7cm, above of=E] {};
\node (F') [node distance=0.8cm, above of=E'] {};
\node (F'') [node distance=0.7cm, above of=E''] {};
\node (G) [node distance=0.7cm, below of=E] {};
\node (G') [node distance=0.8cm, below of=E'] {};
\node (G'') [node distance=0.7cm, below of=E''] {};

\draw[->, bend right=25] (F) to node [swap] {} (G);
\draw[->] (F') to node {} (G');
\draw[->, bend left=25] (F'') to node {} (G'');

\node () [node distance=1.3cm, right of=B] {$\Rrightarrow$};
\node () [node distance=2.1cm, right of=B] {$\Rrightarrow$};

\draw[->, bend left=80] (C) to node {} (D);
\draw[->] (C) to node [swap] {} (D);
\draw[->, bend right=80] (C) to node [swap] {} (D);

\node (U') [node distance=1.05cm, right of=C] {};
\node (U'') [node distance=1.7cm, right of=C] {};
\node (U''') [node distance=2.35cm, right of=C] {};
\node (V') [node distance=10.5mm, above of=U'] {};
\node (V'') [node distance=11.5mm, above of=U''] {};
\node (V''') [node distance=10.5mm, above of=U'''] {};

\draw[->, bend right=20] (V') to node [swap] {} (U');
\draw[->] (V'') to node {} (U'');
\draw[->, bend left=20] (V''') to node {} (U''');

\node (W) [node distance=0.55cm, above of=U''] {};
\node() [node distance=0.38cm, left of=W] {$\Rrightarrow$};
\node() [node distance=0.38cm, right of=W] {$\Rrightarrow$};

\node (N) [node distance=14mm, right of=C] {};
\node (N') [node distance=20mm, right of=C] {};
\node (M) [node distance=1.1cm, below of=N] {};
\node (M') [node distance=1.1cm, below of=N'] {};

\draw[->, bend right=20] (N) to node {} (M);
\draw[->, bend left=20] (N') to node {} (M');

\node () [node distance=0.55cm, below of=U''] {$\Rrightarrow$};

\end{tikzpicture}
}
\end{center}
\end{example}

\begin{lemma}\label{degenerate is 0 under arity function} An $n$-pasting diagram $\sigma$ in $\bs{1}$ is degenerate in the sense of Definition \ref{degenerate pd} if and only if $|\sigma|_n = 0$.
\end{lemma}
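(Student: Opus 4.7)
The plan is to unfold the definitions and observe that the claim is essentially tautological, modulo spelling out what it means for a pasting diagram to ``contain'' an $n$-cell. An $n$-pasting diagram in a globular set $\bs{G}$ is an $n$-cell of the free strict $\omega$-category $\bs{G}^*$; concretely, it is a globular shape whose top-dimensional globes are labeled by $n$-cells of $\bs{G}$ (with lower-dimensional globes labeled by cells of $\bs{G}$ of the appropriate dimension). The degenerate $n$-pasting diagrams of Definition \ref{degenerate pd} are exactly those shapes that have no top-dimensional globes to label, i.e., those arising as identity $n$-cells of lower-dimensional pasting diagrams in $\bs{G}^*$.

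Applying this to the terminal globular set $\bs{1}$, whose $n$-cell set is a one-element set $\{\filledstar\}$ for every $n$, the only label available for a top-dimensional globe is the unique $n$-cell $\filledstar$ of $\bs{1}$. Thus an $n$-pasting diagram $\sigma$ in $\bs{1}$ contains an $n$-cell of $\bs{1}$ precisely when at least one of its top-dimensional globes is present (and necessarily labeled by $\filledstar$), and in this case $|\sigma|_n$ counts exactly the number of such globes. Conversely, $\sigma$ is degenerate precisely when it has no top-dimensional globes, in which case the count is $|\sigma|_n = 0$.

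Concretely, first I would set up notation by recalling that every $n$-pasting diagram $\sigma$ in $\bs{1}$ decomposes, via the concatenation operations in $\bs{1}^*$, as either (a) a (possibly iterated) identity on an $(n{-}1)$-pasting diagram, or (b) a sequence of $n$-dimensional globes joined along matching boundaries. Case (a) gives exactly the degenerate diagrams and contributes $0$ to $|\sigma|_n$; case (b), and more generally any diagram containing at least one top-dimensional globe, yields $|\sigma|_n \geqslant 1$. These two cases exhaust all $n$-pasting diagrams, which is the required ``if and only if''.

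The only potential obstacle is making the argument that ``$\sigma$ contains no $n$-cells of $\bs{1}$ iff $|\sigma|_n=0$'' rigorous without relying on an overly informal picture of pasting diagrams. Since we already introduced the counting function $|-|_n$ in Definition \ref{arity function} by reading off the number of copies of $\filledstar$ appearing as top-dimensional labels, and since Definition \ref{degenerate pd} specifies ``no $n$-cells of $\bs{G}$ appear'' (with $\bs{G}=\bs{1}$ in our case), both conditions reduce to the same counting statement. Given this, the proof should be no more than a sentence or two in the paper: the statement is really a sanity check confirming that the arity function $|-|_n$ correctly detects degeneracy.
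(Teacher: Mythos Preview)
Your proposal is correct: the lemma is immediate from comparing Definition~\ref{degenerate pd} (an $n$-pasting diagram is degenerate when it contains no $n$-cells) with Definition~\ref{arity function} (the arity $|\sigma|_n$ counts the $n$-cells), and the paper treats it the same way, stating the lemma without proof. Your write-up is more detailed than necessary, but the core observation---that both conditions unwind to ``the number of top-dimensional cells is zero''---is exactly right.
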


Recall from Section \ref{GlobOpSubSection} that given a composite $n$-cell $\Lambda \circ (\Lambda_1,...,\Lambda_m)$ in a globular operad $\bs{G}$, each $\Lambda_i$ is a $k$-cell of $\bs{G}$ for some $k \leqslant n$ and $\Lambda$ is an $n$-cell. In particular, $(\Lambda_1,...,\Lambda_m)$ is an $n$-pasting diagram in $\bs{G}$ of shape $g(\Lambda)$, where $g:\bs{G} \longrightarrow \bs{1}^*$ is the underlying collection map. The $n$-pasting diagram
$$g(\Lambda \circ (\Lambda_1,...,\Lambda_m)) = g(\Lambda) \circ (g(\Lambda_1),...,g(\Lambda_m))$$
in $\bs{1}$ is obtained by replacing the individual cells in $g(\Lambda)$ with the $g(\Lambda_i)$s. We may also view this $n$-pasting diagram as a composite in the globular operad $\bs{T}$ for strict $\omega$-categories; see Example \ref{GlobOp T for strict w cats}.

\begin{lemma}\label{arity in compite cells in a GlobOp} Let $\bs{G}$ be a  globular operad with underlying collection map $g$, let $\Lambda \circ (\Lambda_1,...,\Lambda_m)$ be a composite $n$-cell in $\bs{G}$, and let $(\Delta_1,...,\Delta_d)$ be the tuple consisting of just those $\Lambda_i$s given by $n$-cells of $\bs{G}$, as opposed to $k$-cells for $k < n$. Then $\left|g(\Lambda)\right|_n = d$, and 
$$\big| g(\Lambda \circ (\Lambda_1,...,\Lambda_m)) \big|_n = \ \big| g(\Lambda_1) \big|_n + ... + \big| g(\Lambda_m) \big|_n = \ \begin{cases}
0 & d = 0, \\
\big| g(\Delta_1) \big|_n + ... + \big| g(\Delta_d) \big|_n & d \neq 0.
\end{cases}$$
\end{lemma}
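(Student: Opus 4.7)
The plan is to unpack the pullback definition of $\bs{G} \circ \bs{G}$ to identify the shape of $g(\Lambda)$ with the combinatorics of the inputs $(\Lambda_1,\ldots,\Lambda_m)$, and then to rely on the substitution description of composition in $\bs{1}^*$ together with Lemma \ref{degenerate is 0 under arity function} to evaluate the arity of the composite. I would split the argument into two stages, treating the identity $|g(\Lambda)|_n = d$ first and the sum formula second.

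For the first claim, recall from Definition \ref{circ functor} that an $n$-cell of $\bs{G} \circ \bs{G}$ is a pair $((\Lambda_1,\ldots,\Lambda_m), \Lambda)$ where $(\Lambda_1,\ldots,\Lambda_m)$ is an $n$-pasting diagram in $\bs{G}$ (that is, an $n$-cell of $\bs{G}^*$) whose image under the induced map $g^*:\bs{G}^* \longrightarrow \bs{1}^*$ is precisely $g(\Lambda)$. An $n$-pasting diagram in $\bs{G}$ is assembled from its shape in $\bs{1}$ by filling each cell-position with a cell of $\bs{G}$ of the matching dimension: the $n$-dimensional positions in $g(\Lambda)$ must be filled with $n$-cells of $\bs{G}$, while $k$-dimensional positions for $k<n$ must be filled with $k$-cells of $\bs{G}$ (which are then regarded as degenerate $n$-cells inside $\bs{G}^*$). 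Since the list $(\Lambda_1,\ldots,\Lambda_m)$ records exactly one entry per position in $g(\Lambda)$, the entries that are $n$-cells of $\bs{G}$ are in bijection with the $n$-cell positions of $g(\Lambda)$. That count is $d$ on the one hand, and $|g(\Lambda)|_n$ on the other, yielding $|g(\Lambda)|_n = d$.

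For the second claim, I would use the explicit substitution description of $\circ$ in $\bs{1}^*$ given just after Notation \ref{boundary notation}: $g(\Lambda) \circ (g(\Lambda_1), \ldots, g(\Lambda_m))$ is obtained by replacing the $i$-th cell of $g(\Lambda)$ with the pasting diagram $g(\Lambda_i)$. Since the operad map satisfies $g(\Lambda \circ (\Lambda_1,\ldots,\Lambda_m)) = g(\Lambda) \circ (g(\Lambda_1),\ldots,g(\Lambda_m))$, every $n$-cell of the composite pasting diagram is inherited from an $n$-cell occurring inside some $g(\Lambda_i)$, and conversely every such $n$-cell survives the substitution unchanged. This gives
\[
\bigl|g(\Lambda \circ (\Lambda_1,\ldots,\Lambda_m))\bigr|_n \;=\; \sum_{i=1}^{m} |g(\Lambda_i)|_n.
\]
To pass from the sum over all $\Lambda_i$ to the sum over just the $\Delta_j$, observe that when $\Lambda_i$ is a $k$-cell of $\bs{G}$ with $k<n$, the image $g(\Lambda_i)$ is a $k$-pasting diagram in $\bs{1}$, hence a degenerate $n$-pasting diagram in the sense of Definition \ref{degenerate pd}; by Lemma \ref{degenerate is 0 under arity function}, $|g(\Lambda_i)|_n = 0$. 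Only the genuine $n$-cell inputs $\Delta_1,\ldots,\Delta_d$ contribute, so the sum collapses to $|g(\Delta_1)|_n + \cdots + |g(\Delta_d)|_n$ when $d\neq 0$ and to $0$ when $d = 0$.

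The main obstacle is making the bijection between $n$-cell positions of $g(\Lambda)$ and $n$-dimensional entries among the $\Lambda_i$ fully rigorous, since pasting diagrams in globular sets admit several equivalent descriptions and the interplay between the pullback defining $\bs{G} \circ \bs{G}$ and the free $\omega$-category functor $(-)^*$ on $\mathbf{GSet}$ is where the bookkeeping can become delicate. Once this correspondence is established, both parts of the lemma reduce to the elementary combinatorics of substituting pasting diagrams for cells in $\bs{1}^*$.
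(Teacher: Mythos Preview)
Your proposal is correct and follows essentially the same approach as the paper: the key step in both is to observe that any $\Lambda_i$ which is a $k$-cell for $k<n$ has $g(\Lambda_i)$ degenerate, hence $|g(\Lambda_i)|_n=0$ by Lemma~\ref{degenerate is 0 under arity function}. The paper's proof is terser, treating $|g(\Lambda)|_n=d$ and the substitution sum formula as evident and writing out only the degeneracy argument, whereas you helpfully spell out all three pieces.
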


\begin{proof}
For each $\Lambda_i$ not contained in the tuple $(\Delta_1,...,\Delta_d)$, $g(\Lambda_i)$ must be a degenerate $n$-pasting diagram in $\bs{1}$, so by Lemma \ref{degenerate is 0 under arity function} we have $\left|g(\Lambda_i)\right|_n =0$. 
\end{proof}

We now describe a canonical strict total order on the $n$-cells contained in an $n$-pasting diagram in $\bs{1}$. For this, we use the fact that every $n$-pasting diagram in $\bs{1}$ containing exactly two copies of the single $0$-cell of $\bs{1}$ defines an $(n{-}1)$-pasting diagram. This lower dimensional diagram is obtained by omitting the 0-cells and representing every $k$-cell as a $(k{-}1)$-cell for all $k > 0$. For instance, the 3-pasting diagram on the left below corresponds to the 2-pasting diagram on the right.
\begin{center}
\begin{tikzpicture}[node distance=3.4cm, auto]

\node (A) {$\cdot$};
\node (B) [right of=A] {$\cdot$};

\draw[->, bend left=80] (A) to node {} (B);
\draw[->] (A) to node [swap] {} (B);
\draw[->, bend right=80] (A) to node [swap] {} (B);

\node (X') [node distance=1.05cm, right of=A] {};
\node (X'') [node distance=1.7cm, right of=A] {};
\node (X''') [node distance=2.35cm, right of=A] {};
\node (Y') [node distance=10.5mm, above of=X'] {};
\node (Y'') [node distance=11.5mm, above of=X''] {};
\node (Y''') [node distance=10.5mm, above of=X'''] {};

\draw[->, bend right=20] (Y') to node [swap] {} (X');
\draw[->] (Y'') to node {} (X'');
\draw[->, bend left=20] (Y''') to node {} (X''');

\node (Z) [node distance=0.55cm, above of=X''] {};
\node() [node distance=0.38cm, left of=Z] {$\Rrightarrow$};
\node() [node distance=0.38cm, right of=Z] {$\Rrightarrow$};

\node (P) [node distance=9mm, right of=A] {};
\node (P') [node distance=15mm, right of=A] {};
\node (P'') [node distance=19mm, right of=A] {};
\node (P''') [node distance=25mm, right of=A] {};
\node (Q) [node distance=9.5mm, below of=P] {};
\node (Q') [node distance=11mm, below of=P'] {};
\node (Q'') [node distance=11mm, below of=P''] {};
\node (Q''') [node distance=9.5mm, below of=P'''] {};

\draw[->, bend right=22] (P) to node [swap] {} (Q);
\draw[->, bend right=18] (P') to node {} (Q');
\draw[->, bend left=18] (P'') to node {} (Q'');
\draw[->, bend left=22] (P''') to node {} (Q''');

\node (R) [node distance=0.55cm, below of=X''] {$\Rrightarrow$};
\node() [node distance=0.6cm, left of=R] {$\Rrightarrow$};
\node() [node distance=0.6cm, right of=R] {$\Rrightarrow$};

\node (a) [node distance=2cm, right of=B] {$\cdot$};
\node (b) [node distance=2.4cm, right of=a] {$\cdot$};
\node (c) [node distance=2.4cm, right of=b] {$\cdot$};

\draw[->, bend left=60] (a) to node {} (b);
\draw[->] (a) to node {} (b);
\draw[->, bend right=60] (a) to node [swap] {} (b);

\node (w) [node distance=1.2cm, right of=a] {};
\node () [node distance=3.5mm, above of=w] {$\Downarrow$};
\node () [node distance=3.5mm, below of=w] {$\Downarrow$};

\draw[->, bend left=80] (b) to node {} (c);
\draw[->, bend left=22] (b) to node {} (c);
\draw[->, bend right=22] (b) to node [swap] {} (c);
\draw[->, bend right=80] (b) to node [swap] {} (c);

\node (x) [node distance=1.2cm, right of=b] {$\Downarrow$};
\node () [node distance=5.5mm, above of=x] {$\Downarrow$};
\node () [node distance=5.5mm, below of=x] {$\Downarrow$};

\end{tikzpicture}
\end{center}

\begin{definition}\label{standard ordering} We define the \textit{standard order} on the copies of the unique $n$-cell of $\bs{1}$ contained in an $n$-pasting diagram in $\bs{1}$ by induction on $n$.
The standard order on the 1-cells contained in a 1-pasting diagram is given by numbering those cells from left to right, as illustrated below. 
\begin{center}
\begin{tikzpicture}[node distance=2.2cm, auto]

\node (A) {$\cdot$};
\node (B) [right of=A] {$\cdot$};
\node (C) [right of=B] {$\cdot$};
\node (D) [right of=C] {$\cdot$};
\node (E) [right of=D] {$\cdot$};

\draw[->] (A) to node {$1$} (B);
\draw[->] (B) to node {$2$} (C);
\draw[->] (C) to node {$3$} (D);
\draw[->] (D) to node {$4$} (E);

\end{tikzpicture}
\end{center}
Then for $n > 1$ and an $n$-pasting diagram $\pi$, we begin by considering the smaller $n$-pasting diagram contained between the two leftmost copies of the single 0-cell of $\bs{1}$.
By the reasoning above, this smaller $n$-pasting diagram defines a unique $(n{-}1)$-pasting diagram, so by induction we may number the cells accordingly.  Next, take the $n$-pasting diagram contained between the second and third copies of the single $0$-cell of $\bs{1}$ and continue the process, and so on until a natural number has been assigned to every $n$-cell contained in $\pi$.
\end{definition}

\begin{example} The standard order on the 3-cells contained in the 3-pasting diagram $\Psi$ used in Example \ref{arity function examples} is illustrated below.
\begin{center}
\begin{tikzpicture}[node distance=3.4cm, auto]

\node (A) {$\cdot$};
\node (B) [right of=A] {$\cdot$};
\node (C) [right of=B] {$\cdot$};
\node (D) [right of=C] {$\cdot$};

\draw[->, bend left=80] (A) to node {} (B);
\draw[->] (A) to node [swap] {} (B);
\draw[->, bend right=80] (A) to node [swap] {} (B);

\node (X') [node distance=1.05cm, right of=A] {};
\node (X'') [node distance=1.7cm, right of=A] {};
\node (X''') [node distance=2.35cm, right of=A] {};
\node (Y') [node distance=10.5mm, above of=X'] {};
\node (Y'') [node distance=11.5mm, above of=X''] {};
\node (Y''') [node distance=10.5mm, above of=X'''] {};

\draw[->, bend right=20] (Y') to node [swap] {} (X');
\draw[->] (Y'') to node {} (X'');
\draw[->, bend left=20] (Y''') to node {} (X''');

\node (Z) [node distance=0.73cm, above of=X''] {};
\node() [node distance=0.38cm, left of=Z] {$1$};
\node() [node distance=0.38cm, right of=Z] {$2$};

\node (Z') [node distance=0.37cm, above of=X''] {};
\node() [node distance=0.38cm, left of=Z'] {$\Rrightarrow$};
\node() [node distance=0.38cm, right of=Z'] {$\Rrightarrow$};

\node (P) [node distance=9mm, right of=A] {};
\node (P') [node distance=15mm, right of=A] {};
\node (P'') [node distance=19mm, right of=A] {};
\node (P''') [node distance=25mm, right of=A] {};
\node (Q) [node distance=9.5mm, below of=P] {};
\node (Q') [node distance=11mm, below of=P'] {};
\node (Q'') [node distance=11mm, below of=P''] {};
\node (Q''') [node distance=9.5mm, below of=P'''] {};

\draw[->, bend right=22] (P) to node [swap] {} (Q);
\draw[->, bend right=18] (P') to node {} (Q');
\draw[->, bend left=18] (P'') to node {} (Q'');
\draw[->, bend left=22] (P''') to node {} (Q''');

\node (R) [node distance=0.38cm, below of=X''] {$4$};
\node() [node distance=0.6cm, left of=R] {$3$};
\node() [node distance=0.6cm, right of=R] {$5$};

\node (R') [node distance=0.72cm, below of=X''] {$\Rrightarrow$};
\node() [node distance=0.6cm, left of=R'] {$\Rrightarrow$};
\node() [node distance=0.6cm, right of=R'] {$\Rrightarrow$};

\draw[->, bend left=50] (B) to node {} (C);
\draw[->, bend right=50] (B) to node {} (C);

\node (E) [node distance=1.1cm, right of=B] {};
\node (E') [node distance=1.7cm, right of=B] {};
\node (E'') [node distance=2.3cm, right of=B] {};
\node (F) [node distance=0.7cm, above of=E] {};
\node (F') [node distance=0.8cm, above of=E'] {};
\node (F'') [node distance=0.7cm, above of=E''] {};
\node (G) [node distance=0.7cm, below of=E] {};
\node (G') [node distance=0.8cm, below of=E'] {};
\node (G'') [node distance=0.7cm, below of=E''] {};

\draw[->, bend right=25] (F) to node [swap] {} (G);
\draw[->] (F') to node {} (G');
\draw[->, bend left=25] (F'') to node {} (G'');

\node (x) [node distance=1.3cm, right of=B] {};
\node (y) [node distance=2.1cm, right of=B] {};

\node () [node distance=0.18cm, above of=x] {$6$};
\node () [node distance=0.18cm, below of=x] {$\Rrightarrow$};

\node () [node distance=0.18cm, above of=y] {$7$};
\node () [node distance=0.18cm, below of=y] {$\Rrightarrow$};

\draw[->, bend left=80] (C) to node {} (D);
\draw[->] (C) to node [swap] {} (D);
\draw[->, bend right=80] (C) to node [swap] {} (D);

\node (U') [node distance=1.05cm, right of=C] {};
\node (U'') [node distance=1.7cm, right of=C] {};
\node (U''') [node distance=2.35cm, right of=C] {};
\node (V') [node distance=10.5mm, above of=U'] {};
\node (V'') [node distance=11.5mm, above of=U''] {};
\node (V''') [node distance=10.5mm, above of=U'''] {};

\draw[->, bend right=20] (V') to node [swap] {} (U');
\draw[->] (V'') to node {} (U'');
\draw[->, bend left=20] (V''') to node {} (U''');

\node (W) [node distance=0.73cm, above of=U''] {};
\node() [node distance=0.38cm, left of=W] {$8$};
\node() [node distance=0.38cm, right of=W] {$9$};

\node (W') [node distance=0.37cm, above of=U''] {};
\node() [node distance=0.38cm, left of=W'] {$\Rrightarrow$};
\node() [node distance=0.38cm, right of=W'] {$\Rrightarrow$};

\node (N) [node distance=14mm, right of=C] {};
\node (N') [node distance=20mm, right of=C] {};
\node (M) [node distance=1.1cm, below of=N] {};
\node (M') [node distance=1.1cm, below of=N'] {};

\draw[->, bend right=20] (N) to node {} (M);
\draw[->, bend left=20] (N') to node {} (M');

\node (R) [node distance=0.4cm, below of=U''] {$10$};
\node (R') [node distance=0.74cm, below of=U''] {$\Rrightarrow$};

\end{tikzpicture}
\end{center}
\end{example}

\begin{remark}\label{order on general pds}
We have been implicitly using the standard order to denote pasting diagrams in globular sets. To see this, consider that an $n$-pasting diagram in a globular set $\bs{G}$ is given by the concatenation of $k$-cells of $\bs{G}$ along matching boundaries for all $k \leqslant n$. For example, a 2-pasting diagram 
\begin{center}
\begin{tikzpicture}[node distance=2.6cm, auto]

\node (A) {$U$};
\node (B) [right of=A] {$V$};
\node (C) [node distance=2.4cm, right of=B] {$W$};

\draw[->, bend left=60] (A) to node {} (B);
\draw[->] (A) to node {} (B);
\draw[->, bend right=60] (A) to node [swap] {} (B);

\node (W) [node distance=1.3cm, right of=A] {};
\node () [node distance=0.4cm, above of=W] {$\Downarrow \nu$};
\node () [node distance=0.4cm, below of=W] {$\Downarrow \nu'$};

\draw[->] (B) to node {$v$} (C);

\end{tikzpicture}
\end{center}
arises from concatenating the 1-cell $v$ and the 2-cells $\nu$ and $\nu'$ along matching 0 and 1-cell boundaries. When we want to refer to a specific $n$-pasting diagram without having to draw it, we write a list of the $k$-cells making up the diagram. In the case of the 2-pasting above, for instance, we write $(\nu, \nu', v)$. 

However, in order to minimise ambiguity, we need to chose a consistent \textit{order} in which to write these cells. 
We begin by representing each of the $k$-cells in our list as an $n$-cell. For our example, this would give us the diagram on the left below.
\begin{center}
\begin{tikzpicture}[node distance=2.6cm, auto]

\node (A) {$U$};
\node (B) [right of=A] {$V$};
\node (C) [right of=B] {$W$};

\draw[->, bend left=60] (A) to node {} (B);
\draw[->] (A) to node {} (B);
\draw[->, bend right=60] (A) to node [swap] {} (B);

\node (W) [node distance=1.3cm, right of=A] {};
\node () [node distance=0.4cm, above of=W] {$\Downarrow \nu$};
\node () [node distance=0.4cm, below of=W] {$\Downarrow \nu'$};

\draw[->, bend left=40] (B) to node {} (C);
\draw[->, bend right=40] (B) to node [swap] {} (C);

\node () [node distance=1.3cm, right of=B] {$\Downarrow v$};

\node (A') [right of=C] {$\cdot$};
\node (B') [ right of=A'] {$\cdot$};
\node (C') [right of=B'] {$\cdot$};

\draw[->, bend left=65] (A') to node {} (B');
\draw[->] (A') to node {} (B');
\draw[->, bend right=65] (A') to node [swap] {} (B');

\node (W') [node distance=1.3cm, right of=A'] {};
\node () [node distance=0.38cm, above of=W'] {$\Downarrow 1$};
\node () [node distance=0.38cm, below of=W'] {$\Downarrow 2$};

\draw[->, bend left=45] (B') to node {} (C');
\draw[->, bend right=45] (B') to node [swap] {} (C');

\node () [node distance=1.3cm, right of=B'] {$\Downarrow 3$};

\end{tikzpicture}
\end{center}
We then arrange our list of cells according to the standard order on the $n$-cells contained in the unique $n$-pasting diagram in $\bs{1}$ of the same shape. In our example, this is the order shown in the diagram on the right above, so we denote our original 2-pasting diagram by $(\nu, \nu', v)$, rather than $(\nu', \nu, v)$ or $(v, \nu, \nu')$, etc.
\end{remark}

Proposition \ref{non-standard ordering} describes another total order on the cells contained in pasting diagrams in $\bs{1}$ that, along with the standard order, will be essential for our construction of slices. We first require the following definition. 

\begin{definition} A presentation for a globular operad is a \textit{$k$-free presentation} if it contains no $n$-cell generators for any $n > k$, and no $n$-cell relations for any $n > k-1$. A globular operad is  \textit{$k$-free} if it has a $k$-free presentation.
\end{definition}

\begin{remarks} \label{k-free remarks}
\leavevmode
\begin{enumerate}
\item For $n>k$, the only $n$-cell of a $k$-free globular operad is the identity $n$-cell $\text{id}_n$.
\item Every presentation $P$ for a globular operad $\bs{G}$ contains a $k$-free presentation $\tilde{P}_k$, given by taking the $l$-cell generators and $(l-1)$-cell relations of $P$ for all $l \leqslant k$, and omitting everything else. The $k$-free operad $\bs{J_k}$ defined by $\tilde{P}_k$ is equal to $\bs{G}$ when truncated to $(k-1)$-dimensions, and has $k$-cells given by the free operadic composites of the $k$-cell generators.
\item Given a presentation $P$ for a globular operad $\bs{G}$, each $k$-cell relation in $P$ corresponds to an equality between a pair of parallel $k$-cells in the induced $k$-free operad $\bs{J_k}$. In particular, the set $R_k$ of $k$-cell relations is equipped with a pair of functions
\begin{center}
\begin{tikzpicture}[node distance=3.2cm, auto]

\node (A) {$R_k$};
\node (B) [right of=A] {$\mathbf{GSet}(\bs{B_k}, \bs{J_k})$};

\draw[transform canvas={yshift=0.75ex},->] (A) to node {$e_k$} (B);
\draw[transform canvas={yshift=-0.25ex},->] (A) to node [swap] {$q_k$} (B);

\end{tikzpicture}
\end{center}
into the $k$-cells of $\bs{J_k}$ such that for each $y \in R_k$, $e_k(y)$ and $q_k(y)$ are parallel $k$-cells of $\bs{J_k}$ satisfying $j(e_k(y)) = j(q_k(y))$, where $j:\bs{J_k} \longrightarrow \bs{1}^*$ denotes the underlying collection map of $\bs{J_k}$; see \cite{Me} for details. 
\end{enumerate}
\end{remarks}

\begin{example}\label{Strict J_2} Let $P$ be the presentation for the 3-globular operad $\bs{T_3}$ for strict 3-categories constructed in Example \ref{pres for T_3}. The 2-free globular operad $\bs{J_2}$ induced by $P$ has the same 0 and 1-cells as $\bs{T_3}$, and has 2-cells given by the free operadic composites of the 2-cell generators $i_2$, $h_2$ and $v_2$. The images of these 2-cells under the underlying collection map $j:\bs{J_2} \longrightarrow \bs{1}^*$ are illustrated below.
\begin{center}
\begin{tikzpicture}[node distance=2cm, auto]

\node (A) {$\cdot$};
\node (B) [node distance=2.5cm, right of =A] {$\cdot$};

\draw[->, bend left=40] (A) to node {$\text{id}_1$} (B);
\draw[->, bend right=40] (A) to node [swap] {$\text{id}_1$} (B);

\node () [node distance=1.25cm, right of =A] {$\Downarrow i_2$};

\node (P) [node distance=3cm, right of =B] {$\cdot$};
\node (Q) [right of =P] {$\cdot$};

\draw[->] (P) to node {} (Q);

\node (b) [node distance=0.5cm, right of=B] {};
\node (p) [node distance=0.5cm, left of=P] {};
\draw[|->, dashed] (b) to node {$j$} (p);

\node (A') [node distance=2.3cm, below of=A] {$\cdot$};
\node (B') [node distance=2.5cm, right of =A'] {$\cdot$};

\draw[->, bend left=40] (A') to node {$h_1$} (B');
\draw[->, bend right=40] (A') to node [swap] {$h_1$} (B');

\node () [node distance=1.25cm, right of =A'] {$\Downarrow h_2$};

\node (P') [node distance=3cm, right of =B'] {$\cdot$};
\node (Q') [right of =P'] {$\cdot$};
\node (R') [right of =Q'] {$\cdot$};

\draw[->, bend left=40] (P') to node {} (Q');
\draw[->, bend right=40] (P') to node {} (Q');
\draw[->, bend left=40] (Q') to node {} (R');
\draw[->, bend right=40] (Q') to node {} (R');

\node () [node distance=1cm, right of =P'] {$\Downarrow$};
\node () [node distance=1cm, right of =Q'] {$\Downarrow$};

\node (b') [node distance=0.5cm, right of=B'] {};
\node (p') [node distance=0.5cm, left of=P'] {};
\draw[|->, dashed] (b') to node {$j$} (p');

\node (A'') [node distance=2.3cm, below of=A'] {$\cdot$};
\node (B'') [node distance=2.5cm, right of =A''] {$\cdot$};

\draw[->, bend left=40] (A'') to node {$\text{id}_1$} (B'');
\draw[->, bend right=40] (A'') to node [swap] {$\text{id}_1$} (B'');

\node () [node distance=1.25cm, right of =A''] {$\Downarrow v_2$};

\node (P'') [node distance=3cm, right of =B''] {$\cdot$};
\node (Q'') [right of =P''] {$\cdot$};

\draw[->, bend left=60] (P'') to node {} (Q'');
\draw[->] (P'') to node {} (Q'');
\draw[->, bend right=60] (P'') to node {} (Q'');

\node (x) [node distance=1cm, right of =P''] {};
\node () [node distance=0.3cm, above of =x] {$\Downarrow$};
\node () [node distance=0.35cm, below of =x] {$\Downarrow$};

\node (b'') [node distance=0.5cm, right of=B''] {};
\node (p'') [node distance=0.5cm, left of=P''] {};
\draw[|->, dashed] (b'') to node {$j$} (p'');

\end{tikzpicture}
\end{center}
An algebra for $\bs{J_2}$ on a globular set $\bs{A}$ equips the 0 and 1-cells of $\bs{A}$ with the structure of a category, picks out a specified 2-cell $a \longrightarrow a$ for each 1-cell $a$ of $\bs{A}$, and provides binary horizontal and vertical composition operations on the the 2-cells of $\bs{A}$. However, composition of 2-cell satisfies no axioms.
\end{example}

\begin{remark}\label{The k-cells of J_k} 
Let $\bs{J_k}$ be a $k$-free globular operad equipped with a $k$-free presentation, so each $k$-cell of $\bs{J_k}$ is a unique free operadic composite of the $k$-cell generators. Using the associativity and identity axioms for operads, we can show by induction that every non-identity $k$-cell of $\bs{J_k}$ may be expressed in the form
$$Q \circ (\Lambda_1,...,\Lambda_m)$$
where $Q$ is some smaller operadic composite of the $k$-cell generators, each $\Lambda_i$ is either an $l$-cell generator of $P$ or an identity $l$-cell for some $l \leqslant k$, and $\Lambda_i$ is a non-identity cell for at least one $i$. Note that this is analogous to how we characterised the elements of a free plain operad in Example \ref{free plain operads}. 
\end{remark}

\begin{example}\label{rewriting cells of J_k}
The 2-cells $i_2$, $h_2$ and $v_2$ of the 2-free globular operad $\bs{J_2}$ presented in Example \ref{Strict J_2} may be written as $\text{id}_2 \circ (i_2)$, $\text{id}_2 \circ (h_2)$, and $\text{id}_2 \circ (v_2)$, respectively.
Meanwhile, we may use the identity and associativity axioms to rewrite the 2-cell $h_2 \circ (v_2, \, i_2 \circ (i_1))$ of $\bs{J_2}$ as
$$h_2 \circ (v_2, \, i_2 \circ (i_1)) = h_2 \circ (\text{id} \circ (v_2), \, i_2 \circ (i_1)) = (h_2 \circ (\text{id}_2, i_2)) \circ (v_2, i_1),$$
or equivalently, as $(h_2 \circ (v_2, i_2)) \circ (\text{id}_2, \text{id}_2, i_1)$.
\end{example}

\begin{proposition}\label{non-standard ordering} 
Let $\bs{J_k}$ be a $k$-free globular operad equipped with a $k$-free presentation, and let $j:\bs{J_k} \longrightarrow \bs{1}^*$ be the underlying collection map. Each $k$-cell $x$ of $\bs{J_k}$ induces a strict total order on the copies of the unique $k$-cell of $\bs{1}$ contained in $j(x)$, called the \textit{order induced by $x$}, defined as follows.
If $x=\text{id}_k$ then $j(\text{id}_k)$ is the $k$-pasting diagram in $\bs{1}$ made up of a single $k$-cell, so there is only one possible order. 
If $x$ is not the identity $k$-cell, first write $x = Q \circ (\Lambda_1,...,\Lambda_m)$ where $Q$ and the $\Lambda_i$s are as in Remark \ref{The k-cells of J_k}. Then,
$$j(x) = j(Q \circ (\Lambda_1,...,\Lambda_m)) = j(Q) \circ (j(\Lambda_1),...,j(\Lambda_m))$$
is the $k$-pasting diagram in $\bs{1}$ obtained by replacing the individual cells of $j(Q)$ with the $j(\Lambda_i)$s.
We use the standard order on the copies of the unique $k$-cell of $\bs{1}$ contained in $j(\Lambda_1)$, then continue with the $k$-cells contained in $j(\Lambda_2)$, and so on until we have assigned a natural number to each copy of the unique $k$-cell of $\bs{1}$ contained in the whole of $j(x)$.
\end{proposition}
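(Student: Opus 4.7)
The plan is to prove both that the procedure yields a strict total order and that it is independent of the chosen decomposition, by induction on $|j(x)|_k$, the number of copies of the unique $k$-cell of $\bs{1}$ contained in $j(x)$. The base case covers the two trivial situations: if $|j(x)|_k = 0$ then by Lemma \ref{degenerate is 0 under arity function} the pasting diagram $j(x)$ is degenerate and there is nothing to order, and if $x = \text{id}_k$ then $j(x)$ consists of a single $k$-cell and the order is forced.

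For the inductive step, given $x$ with $|j(x)|_k > 0$, I would write $x = Q \circ (\Lambda_1,\ldots,\Lambda_m)$ as in Remark \ref{The k-cells of J_k} and apply Lemma \ref{arity in compite cells in a GlobOp} to conclude that $|j(x)|_k = \sum_{i=1}^m |j(\Lambda_i)|_k$, with positive contributions only from those $\Lambda_i$ that are $k$-cell generators or $\text{id}_k$. Since each $\Lambda_i$ is either a generator or an identity, $j(\Lambda_i)$ is a fixed pasting diagram in $\bs{1}$ whose $k$-cells carry the standard order of Definition \ref{standard ordering}; concatenating these orderings in the order of the $\Lambda_i$s produces a strict total order on all copies of the $k$-cell of $\bs{1}$ appearing in $j(x) = j(Q) \circ (j(\Lambda_1),\ldots,j(\Lambda_m))$. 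That this is strict and total is immediate, since we are gluing together finitely many disjoint strict total orders in a prescribed sequence.

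The hard part, and the real content of the proposition, is verifying well-definedness: two distinct decompositions of $x$ in the form of Remark \ref{The k-cells of J_k} must yield the same induced order. My plan is to reduce the problem to invariance under the operad identity and associativity axioms, which generate every re-expression of $x$ in this form. For the identity axioms, inserting or removing an $\text{id}_l$ with $l<k$ contributes no copies of the $k$-cell of $\bs{1}$ to $j(x)$ and so cannot affect the order, while inserting or removing an $\text{id}_k$ as a leaf changes the expression tree but preserves the cell each leaf picks out in $j(x)$. For associativity, rewriting $Q \circ (\Lambda_1 \circ (\Gamma_{11},\ldots,\Gamma_{1k_1}),\ldots) = (Q \circ (\Lambda_1,\ldots)) \circ (\Gamma_{11},\ldots,\Gamma_{1k_1},\ldots)$ corresponds to re-bracketing the expression tree of $x$; in both cases the induced order is obtained by a left-to-right traversal of the $k$-generator leaves of the tree, which is manifestly invariant under re-bracketing.

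The main obstacle is making the invariance argument watertight, since the iterated recursion unpacks at each level using the standard order on a pasting diagram that is itself defined recursively. My plan to address this is to prove a stronger lemma by simultaneous induction on $|j(x)|_k$: namely, that the induced order coincides with the order obtained by applying the standard-order recipe of Definition \ref{standard ordering} to the ``expanded'' pasting diagram $j(x)$ relative to the partition of its $k$-cells determined by the leaves of the expression tree. This reformulates well-definedness as a statement purely about pasting diagrams in $\bs{1}$, where the invariance under the axiomatic rewriting becomes a straightforward consequence of the fact that the standard order is, by construction, compatible with substitution of pasting diagrams into pasting diagrams.
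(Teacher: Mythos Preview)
Your proposal is correct, and the core idea---that well-definedness reduces to invariance of the induced order under the operad identity and associativity rewrites that relate any two admissible decompositions of $x$---is exactly the right one. However, the paper takes a much shorter route that sidesteps the full induction and the ``stronger lemma'' you propose.

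The paper's argument is a common-refinement trick. Given two decompositions $x = Q \circ (\Lambda_1,\ldots,\Lambda_m) = Q' \circ (\Lambda'_1,\ldots,\Lambda'_{m'})$ as in Remark~\ref{The k-cells of J_k}, it post-composes each with a full layer of identities to obtain the single expression $x \circ (\text{id},\ldots,\text{id})$, and observes (in one line each) that appending such a layer does not change the induced order. Since both decompositions refine to the same expression, the orders agree. This avoids having to check invariance under each axiomatic rewrite separately, and avoids any need to prove compatibility of the standard order with substitution of pasting diagrams.

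What your approach buys is explicitness: you lay out the inductive structure and identify precisely which operad axioms are in play, which makes the argument more self-contained and easier to verify at each step. The paper's approach buys brevity: three displayed equalities with ``clearly'' and ``the same is true for'' between them. Your proposed stronger lemma (that the induced order can be characterised purely in terms of pasting-diagram substitution) is interesting and would work, but is not needed for either argument; the paper never formulates or uses it.
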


\begin{example}\label{non-standard ordering examples} Let $\bs{J_2}$ be the 2-free globular operad given in Example \ref{Strict J_2}. The order induced by the 2-cell $h_2 \circ (v_2, v_2)$ of $\bs{J_2}$ on the 2-cells of $\bs{1}$ contained in $j(h_2 \circ (v_2, v_2)) = j(h_2) \circ (j(v_2), j(v_2))$ is the standard order:
\begin{center}
\begin{tikzpicture}[node distance=2.6cm, auto]

\node (A) {$\cdot$};
\node (B) [right of=A] {$\cdot$};
\node (C) [right of=B] {$\cdot$};

\draw[->, bend left=65] (A) to node {} (B);
\draw[->] (A) to node {} (B);
\draw[->, bend right=65] (A) to node [swap] {} (B);

\node (X) [node distance=1.3cm, right of=A] {};
\node () [node distance=0.4cm, above of=X] {$\Downarrow 1$};
\node () [node distance=0.4cm, below of=X] {$\Downarrow 2$};

\draw[->, bend left=65] (B) to node {} (C);
\draw[->] (B) to node {} (C);
\draw[->, bend right=65] (B) to node [swap] {} (C);

\node (W) [node distance=1.3cm, right of=B] {};
\node () [node distance=0.4cm, above of=W] {$\Downarrow 3$};
\node () [node distance=0.4cm, below of=W] {$\Downarrow 4$};

\end{tikzpicture}
\end{center}
This is obtained by first applying the standard order to the 2-cells of $\bs{1}$ contained in the first copy of $j(v_2)$, and then continuing the process with the 2-cells in second copy of $j(v_2)$.
However, the order
\begin{center}
\begin{tikzpicture}[node distance=2.6cm, auto]

\node (A) {$\cdot$};
\node (B) [right of=A] {$\cdot$};
\node (C) [right of=B] {$\cdot$};

\draw[->, bend left=65] (A) to node {} (B);
\draw[->] (A) to node {} (B);
\draw[->, bend right=65] (A) to node [swap] {} (B);

\node (X) [node distance=1.3cm, right of=A] {};
\node () [node distance=0.4cm, above of=X] {$\Downarrow 1$};
\node () [node distance=0.4cm, below of=X] {$\Downarrow 3$};

\draw[->, bend left=65] (B) to node {} (C);
\draw[->] (B) to node {} (C);
\draw[->, bend right=65] (B) to node [swap] {} (C);

\node (W) [node distance=1.3cm, right of=B] {};
\node () [node distance=0.4cm, above of=W] {$\Downarrow 2$};
\node () [node distance=0.4cm, below of=W] {$\Downarrow 4$};

\end{tikzpicture}
\end{center}
induced by $v_2 \circ (h_2,h_2)$, obtained by first applying the standard order to the 2-cells of $\bs{1}$ in the first copy of $j(h_2)$, and then continuing with the 2-cells in second copy of $j(h_2)$, is \textit{not} the standard order. 
\end{example}

\begin{proof}[proof of Propsition \ref{non-standard ordering examples}]
We need to show that the total order induced by $x$ does not depend on the choice of $Q$ or $\Lambda_i$s. Let $x=Q \circ (\Lambda_1,...,\Lambda_m) = Q' \circ (\Lambda'_1,...,\Lambda'_{m'})$ where $Q$, $\Lambda_i$, $Q'$ and $\Lambda'_i$ are as in Remark \ref{The k-cells of J_k}. Then 
$$Q \circ (\Lambda_1,...,\Lambda_m) 
= ( Q \circ (\Lambda_1,...,\Lambda_m)) \circ (\text{id}_{11},...,\text{id}_{1k_1},...,\text{id}_{m1},...,\text{id}_{mk_m}),$$
and the order induced on $j(x)$ by the expression on the left is clearly the same as the one induced by the expression on the right. The same is true for 
$$ ( Q \circ (\Lambda_1,...,\Lambda_m) ) \circ (\text{id}_{11},...,\text{id}_{1k_1},...,\text{id}_{m1},...,\text{id}_{mk_m}) = ( Q' \circ (\Lambda'_1,...,\Lambda'_{m'}) ) \circ (\text{id}_{11},...\text{id}_{1k_1},...,\text{id}_{m1},...,\text{id}_{mk_m}),$$
and finally, for 
$$\qquad \qquad \qquad ( Q' \circ (\Lambda'_1,...,\Lambda'_{m'}) ) \circ (\text{id}_{11},...\text{id}_{1k_1},...,\text{id}_{m1},...,\text{id}_{mk_m}) = Q' \circ (\Lambda'_1,...,\Lambda'_{m'}). \qquad \qquad \qquad \qedhere $$ 
\end{proof}

We now use Proposition \ref{non-standard ordering} together with the function $|-|:\mathbf{GSet}(\bs{B_k}, \bs{1}^*) \longrightarrow\mathbb{N}$ given in Definition \ref{arity function} to assign an element of a symmetric group to each $k$-cell of a $k$-free globular operad.

\begin{definition}\label{def of t_x} Let $\bs{J_k}$ be a $k$-free globular operad equipped with a $k$-free presentation, and let $j:\bs{J_k} \longrightarrow \bs{1}^*$ be the underlying collection map. 
The \textit{element $t_x$ of the symmetric group} $S_{|j(x)|}$
is the element that, when applied to the standard order on the copies of the unique $k$-cell of $\bs{1}$ contained in $j(x)$, produces the order induced by $x$.
\end{definition}

\begin{example}\label{examples of t_x} Once again, let $\bs{J_2}$ be the 2-free globular operad discussed in Example \ref{Strict J_2}. From Example \ref{non-standard ordering examples}, we see that 
\begin{itemize}
\item if $x = h_2 \circ (v_2, v_2)$ then $t_x = 0 \in S_4$, since the standard order and the order induced by $x$ agree in this case; and
\item if $x = v_2 \circ (h_2, h_2)$  then $t_x = (2 \ 3) \in S_4$, since in this case the order induced by $x$ on the copies of the unique 2-cell of $\bs{1}$ contained in $j(x)$ is the result of permuting the standard order using the element $(2 \ 3)$ of the symmetric group $S_4$.
\end{itemize}
\end{example}

\begin{definition}\label{the collection J_n} Let $\bs{J_k}$ be a $k$-free globular operad equipped with a $k$-free presentation, let $J_k$ be the set of $k$-cell generators in this presentation, and let $j:\bs{J_k} \longrightarrow \bs{1}^*$ be the underlying collection map. The \textit{plain collection $|j(-)|:J_k \longrightarrow \mathbb{N}$ associated to $J_k$} is the composite 
\begin{center}
\begin{tikzpicture}[node distance=2.8cm, auto]

\node (A) {$J_k$};
\node (B) [right of=A] {$\mathbf{GSet}(\bs{B_k}, \bs{J_k})$};
\node (C) [node distance=3.8cm, right of=B] {$\mathbf{GSet}(\bs{B_k}, \bs{1}^*)$};
\node (D) [right of=C] {$\mathbb{N}$};

\draw[right hook->] (A) to node {} (B);
\draw[->] (B) to node {} (C);
\draw[->] (C) to node {} (D);

\end{tikzpicture}
\end{center}
where the first map is the inclusion of the $k$-cell generators into the $k$-cells of $\bs{J_k}$, the second function is induced by $j:\bs{J_k} \longrightarrow \bs{1}^*$, and the third is as in Definition \ref{arity function}. 
\end{definition}

\begin{example}\label{corresponding collection for strict J_2} Let $\bs{J_2}$ be the 2-free globular operad given in Example \ref{Strict J_2}. Then $J_2 = \{ i_2, h_2, v_2 \}$ and the collection $|j(-)|:J_2 \longrightarrow \mathbb{N}$ is given by $|j(i_2)| =0$ and $|j(h_2)| = |j(v_2)| = 2$.
\end{example}

We are now ready to form a direct relationship between globular operads and symmetric operads using the language of presentations. First, recall the free-forgetful adjunction $F \dashv U: \mathbf{SymmOp} \longrightarrow \mathbf{Set}/\mathbb{N}$; see Definition \ref{free symmetric operad adjunction}. Given a plain collection $J_k \longrightarrow \mathbb{N}$, the collection $UF(J_k)$ is the function
\begin{center}
\begin{tikzpicture}[node distance=3cm, auto]

\node (A) {};
\node (B) [right of=A] {$\mathbb{N}$};
\node (a) [node distance=0.15cm, below of=A] {$\underset{n \geqslant 0}{\mathlarger{\sum}} S_n \times J_k^{\infty}[n]$};
\node (a'') [node distance=1.2cm, right of=A] {};

\draw[->] (a'') to node {} (B);

\end{tikzpicture}
\end{center}
for which the preimage of each $n \in \mathbb{N}$ is the set $S_n \times J^{\infty}_k[n]$, and $J_k^{\infty}$ denotes the free \textit{plain} operad on $J_k \longrightarrow \mathbb{N}$; see Example \ref{free plain operads}. Next, observe that taking only the last two maps in the composite in Definition \ref{the collection J_n} defines a collection $|j(-)|:\mathbf{GSet}(\bs{B_k}, \bs{J_k}) \longrightarrow \mathbb{N}$. Finally, recall that we denote the identity element of the symmetric group $S_n$ by 0.

\begin{proposition}\label{map to the corresponding symm op}  
Let $\bs{J_k}$ be a $k$-free globular operad equipped with a $k$-free presentation, let $J_k$ be the set of $k$-cell generators in this presentation, and let $j:\bs{J_k} \longrightarrow \bs{1}^*$ be the underlying collection map. There is a morphism 
\begin{center}
\begin{tikzpicture}[node distance=3.5cm, auto]

\node (A) {$\mathbf{GSet}(\bs{B_k}, \bs{J_k})$};
\node (B) [right of=A] {$UF(J_k)$};
\node (C) [node distance=2cm, right of=B] {};
\node () [node distance=1.5mm, below of=C] {$= \underset{n \geqslant 0}{\mathlarger{\sum}} S_n \times J_k^{\infty}[n]$};

\draw[->] (A) to node {$f$} (B);

\end{tikzpicture}
\end{center}
in the category $\mathbf{Set}/\mathbb{N}$ of plain collections where $f(\text{id}_k) = (0, \text{id})$, $f(\Lambda) = (0, \Lambda)$ for any $k$-cell generator $\Lambda \in J_k$, and 
for a $k$-cell of $\bs{J_k}$ of the form $x = Q \circ (\Lambda_1,..., \Lambda_m)$ where $Q$ and the $\Lambda_i$s are as in Remark \ref{The k-cells of J_k}, and $Q \neq \text{id}_k$,
$$f(Q \circ (\Lambda_1,..., \Lambda_m)) =  ( t_x, \Lambda_Q \circ (\Delta_1,..,\Delta_d))$$ 
where $t_x \in S_{|j(x)|}$ is as in Definition \ref{def of t_x}, $(\Delta_1,..,\Delta_d)$ is the tuple consisting of just those $\Lambda_i$s given by a $k$-cell generator or the identity $k$-cell $\text{id}_k$, and $f(Q) = (t_Q, \Lambda_Q)$ is defined by induction. 
\end{proposition}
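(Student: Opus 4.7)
The plan is to define $f$ by induction on the number of $k$-cell generators appearing in the free operadic composite $x$, and then verify the two conditions required of the statement: (i) the value $f(x)$ does not depend on the choice of decomposition $x = Q \circ (\Lambda_1,\ldots,\Lambda_m)$ from Remark \ref{The k-cells of J_k}, and (ii) $f$ is a morphism in $\mathbf{Set}/\mathbb{N}$, that is, $|j(x)| = n$ whenever $f(x) \in S_n \times J_k^{\infty}[n]$.

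For the well-definedness in (i), the first coordinate is immediate from Proposition \ref{non-standard ordering}: the order induced on the copies of the unique $k$-cell of $\bs{1}$ in $j(x)$ is intrinsic to $x$, so $t_x$ is well-defined. For the second coordinate, I would show that the output transforms consistently under the associativity and identity axioms used to pass between decompositions (as illustrated in Example \ref{rewriting cells of J_k}). Concretely, reassociating or inserting identities to pass from $x = Q \circ (\Lambda_1,\ldots,\Lambda_m)$ to $x = Q' \circ (\Lambda_1',\ldots,\Lambda_{m'}')$ rewrites $f(Q) = (t_Q, \Lambda_Q)$ into $f(Q') = (t_{Q'}, \Lambda_{Q'})$ by the same axioms inside $J_k^{\infty}$, which is itself an operad satisfying the operad axioms (Example \ref{free plain operads}). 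Hence $\Lambda_Q \circ (\Delta_1,\ldots,\Delta_d)$ and $\Lambda_{Q'} \circ (\Delta_1',\ldots,\Delta_{d'}')$ are equal elements of $J_k^{\infty}$.

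For the morphism condition in (ii), I would argue by induction on the number of generators. The base cases $x = \text{id}_k$ and $x = \Lambda \in J_k$ follow from Definition \ref{the collection J_n} and the description of the arity on $J_k^{\infty}$ in Example \ref{free plain operads}. For the inductive step with $x = Q \circ (\Lambda_1,\ldots,\Lambda_m)$ and $Q \neq \text{id}_k$, Lemma \ref{arity in compite cells in a GlobOp} yields $|j(Q)| = d$ and
$$|j(x)| = |j(\Delta_1)| + \cdots + |j(\Delta_d)|,$$
using the convention $|j(\text{id}_k)| = 1 = c(\text{id})$ from Example \ref{free plain operads}. By the inductive hypothesis $\Lambda_Q \in J_k^{\infty}[d]$, so $\Lambda_Q \circ (\Delta_1,\ldots,\Delta_d)$ makes sense and lies in $J_k^{\infty}[|j(x)|]$, and together with $t_x \in S_{|j(x)|}$ this places $f(x) \in S_{|j(x)|} \times J_k^{\infty}[|j(x)|]$, as required.

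The main obstacle is the second-coordinate consistency check in the well-definedness argument: it requires unfolding the recursive definition of $\Lambda_Q$ through the decomposition tree and tracking how each elementary associativity or unit move in $\bs{J_k}$ corresponds to a parallel rewriting in $J_k^{\infty}$. This is the technical heart of the proof, but becomes a routine bookkeeping exercise once the correspondence between the tree structures in the two operads is made explicit.
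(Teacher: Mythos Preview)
Your proposal is correct and follows essentially the same approach as the paper: both verify that $f$ is a morphism in $\mathbf{Set}/\mathbb{N}$ via Lemma \ref{arity in compite cells in a GlobOp} and induction (yielding $|j(Q)|=d$ and $\Lambda_Q\in J_k^\infty[d]$), and both handle well-definedness by reducing to the argument of Proposition \ref{non-standard ordering}. If anything, your treatment of the second-coordinate well-definedness is more explicit than the paper's, which simply refers back to that earlier proof.
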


\begin{proof}
First, we need to show that the function $f$ is a map of plain collections, that is, we need to show that $f$ sends a $k$-cell $x$ of $\bs{J_k}$ for which $|j(x)| = n$ to an element of the set $S_n \times J_k^{\infty}[n]$. This follows from the fact that
$$\Lambda_Q \circ (\Delta_1,...,\Delta_d) \in J_k^{\infty} \big[ | j(\Delta_1)| +...+|j(\Delta_d)| \big]$$
and, by Lemma \ref{arity in compite cells in a GlobOp},
$$\big| j(Q \circ (\Lambda_1,..., \Lambda_m)) \big| = | j(\Lambda_1) |+...+|j(\Lambda_m)| = \big| j(\Delta_1)\big| +...+\big|j(\Delta_d)\big|.$$
Next, we need to show that $\Lambda_Q \circ (\Delta_1,...,\Delta_d)$ is a composite in $J_k^{\infty}$, which is true if $\Lambda_Q \in J_k^{\infty} [d]$. Again, by Lemma \ref{arity in compite cells in a GlobOp}, we know that $|j(Q)| = d$, so since $f(Q) = (t_Q, \Lambda_Q)$, it follows from the induction hypothesis that $\Lambda_Q \in J_k^{\infty} [d]$. The fact $f$ is well-defined, i.e., that $f(x)$ does not depend on the choice of $Q$ or the $\Lambda_i$s, can be shown using a similar argument as in the proof of Proposition \ref{non-standard ordering}.
\end{proof}

Given a $k$-free globular operad $\bs{J_k}$, the morphism defined in Proposition \ref{map to the corresponding symm op} forgets the lower-dimensional data of the $k$-cells of $\bs{J_k}$ while still retaining some important structural information. The example below illustrates how this works in practice. 
First, recall that a pointed double magma is a set equipped with specified element, or a `point', and two binary operations, satisfying no axioms. 

\begin{example}\label{example of f for strict J_2} Let $\bs{J_2}$ be the 2-free globular operad given in Example \ref{Strict J_2}, so $J_2 = \{ i_2, h_2, v_2 \}$ and the collection $|j(-)|:J_2 \longrightarrow \mathbb{N}$ is given by $|j(i_2)| =0$ and $|j(h_2)| = |j(v_2)| = 2$. The free symmetric operad $F(J_2)$ consists of the operadic composites of elements $i_2 \in F(J_2)[0]$ and $h_2, v_2 \in F(J_2)[2]$, subject to no equations. An algebra for $F(J_2)$ on a set $X$ is then a pointed double magma with underlying set $X$; the binary operations are provided by $h_2$ and $v_2$, while $i_2$ picks out a point in $X$. 

The function 
\begin{center}
\begin{tikzpicture}[node distance=3.5cm, auto]

\node (A) {$\mathbf{GSet}(\bs{B_2}, \bs{J_2})$};
\node (B) [right of=A] {$UF(J_2)$};
\node (C) [node distance=2cm, right of=B] {};
\node () [node distance=1.5mm, below of=C] {$= \underset{n \geqslant 0}{\mathlarger{\sum}} S_n \times J_2^{\infty}[n]$};

\draw[->] (A) to node {$f$} (B);

\end{tikzpicture}
\end{center}
defined in Proposition \ref{map to the corresponding symm op} sends $\text{id}_2$ to $(0, \text{id}) \in S_1 \times J_2^{\infty}[1]$, and is defined on the 2-cell generators by $f(i_2) = (0,i_2) \in S_0 \times J_2^{\infty}[0]$, $f(h_2) = (0,h_2) \in S_2 \times J_2^{\infty}[2]$, and $f(v_2) = (0,v_2) \in S_2 \times J_2^{\infty}[2]$.
Of more interest are the values of $f$ at composite 2-cells of $\bs{J_2}$. From Example \ref{examples of t_x}, we see that
$$f(h_2 \circ (v_2, v_2)) = ( 0, \, h_2 \circ (v_2, v_2) ) \in S_4 \times J_2^{\infty}[4]$$
and
$$f(v_2 \circ (h_2, h_2)) = ( (2 \ 3), \, v_2 \circ (h_2, h_2)) \in S_4 \times J_2^{\infty}[4].$$
Informally, given a composite 2-cell $x$ in $\bs{J_2}$, $f$ records the value of $t_x$ and forgets the components of $x$ not coming from the identity 2-cell $\text{id}_2$ or one of the 2-cell generators $i_2$, $h_2$ and $v_2$. This is illustrated by the example below,
\begin{align*}
f \big( ( (h_2 \circ (v_2, i_2)) \circ (\text{id}_2, \text{id}_2, h_1) ) \circ (h_2, h_2, i_1, i_1) \big) &= \big( (2 \ 3), \, ( (h_2 \circ (v_2, i_2)) \circ (\text{id}_2, \text{id}_2) ) \circ (h_2, h_2)\big)\\
&= \left( (2 \ 3), \, (h_2 \circ (v_2, i_2)) \circ (h_2, h_2) \right) \\
&= \left( (2 \ 3), \, h_2 \circ (v_2 \circ  (h_2, h_2), i_2) \right) \in S_4 \times J_2^{\infty}[4].
\end{align*}
\end{example}

We are now ready to define, for any globular operad $\bs{G}$, an associated symmetric operad $S_k$. 

\begin{definition}
Let $P$ be a presentation for a globular operad $\bs{G}$, let $\tilde{P}_k$ be the $k$-free presentation contained in $P$, and let $\bs{J_k}$ be the $k$-free globular operad defined by $\tilde{P}_k$, as in part (2) of Remarks \ref{k-free remarks}. The $k^{th}$ \textit{slice of} $\bs{G}$ is the symmetric operad $S_k$ with presentation $(J_k,R_k,\tilde{e}_k,\tilde{q}_k)$ where
\begin{enumerate}[$\bullet$]

\item $J_k$ and $R_k$ are the sets of $k$-cell generators and $k$-cell relations in $P$, respectively;

\item The collection $|-|:J_k \longrightarrow \mathbb{N}$ is as in Definition \ref{the collection J_n};

\item The maps $\tilde{e}_k$ and $\tilde{q}_k$ are the composites
\begin{center}
\begin{tikzpicture}[node distance=3.5cm, auto]

\node (A) {$R_k$};
\node (B) [right of=A] {$\mathbf{GSet}(\bs{B_k}, \bs{J_k})$};
\node (C) [right of=B] {$UF(J_k)$};

\draw[transform canvas={yshift=0.75ex},->] (A) to node {$e_k$} (B);
\draw[transform canvas={yshift=-0.25ex},->] (A) to node [swap] {$q_k$} (B);
\draw[->] (B) to node {$f$} (C);

\end{tikzpicture}
\end{center}
where $e_k$ and $q_k$ are as in part (3) of Remarks \ref{k-free remarks} and $f$ is defined as in Proposition \ref{map to the corresponding symm op}. Since $j(e_k(y)) = j(q_k(y))$ for any $y \in R_k$, composing either $e_k$ or $q_k$ with $|j(-)|:\mathbf{GSet}(\bs{B_k}, \bs{J_k}) \longrightarrow \mathbb{N}$ defines a collection $R_k \longrightarrow \mathbb{N}$, and the maps $\tilde{e}_k$ and $\tilde{q}_k$ are maps of plain collections.
\end{enumerate}
\end{definition}

Although the definition of slices is fairly technical, it is intuitive to apply in practice. As a first example, we calculate the slices of the 3-globular operad for strict 3-categories. Recall that the $k^{th}$ slice of a globular operad for some theory of higher category should be the symmetric operad for the kind of algebraic structure formed by the $k$-cells of those higher categories whenever they contain only one $l$-cell for all $l < k$. In the case of strict higher categories, we would therefore expect the first slice to be the plain operad for monoids, and all higher slices to be the symmetric operad for commutative monoids. 

Before proceeding, note that the set of 0-cells of a higher category simply form a set, rather than a monoid or some other more complex structure. As such, we have the following lemma.

\begin{lemma} The $0^{th}$ slice of a globular operad for higher categories is the plain operad for sets.
\end{lemma}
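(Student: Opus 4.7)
The plan is to read off the data $(J_0, R_0, \tilde{e}_0, \tilde{q}_0)$ of the presentation defining the $0^{th}$ slice and observe that everything is essentially empty. By Notation \ref{0-cells in higher cats}, any globular operad we consider as a theory of higher categories has a single $0$-cell, the identity $\text{id}_0$, and correspondingly any presentation we work with has no $0$-cell generators and no $0$-cell relations. So for any such presentation $P$ for a globular operad $\bs{G}$, we have $J_0 = \emptyset$ and $R_0 = \emptyset$.

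The first step is therefore to unfold the definition of the $0^{th}$ slice in this case. The associated $0$-free globular operad $\bs{J_0}$ defined by $\tilde{P}_0$ has a single $0$-cell $\text{id}_0$ and no higher cells; the plain collection $|j(-)|:J_0 \longrightarrow \mathbb{N}$ is the unique map out of the empty set; and the parallel pair $\tilde{e}_0, \tilde{q}_0:R_0 \longrightarrow UF(J_0)$ are both the unique map out of the empty collection. Hence the $0^{th}$ slice is the symmetric operad with presentation $(\emptyset, \emptyset, !, !)$.

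Next I would invoke Example \ref{Presentation for I}, which shows that precisely this presentation is a presentation for the symmetric operad $I$, whose algebras are sets. Finally, Lemma \ref{prop - presentations for plain} applies trivially (there are no relations for which to check the hypothesis on $e_n$ and $q_n$), so $I$ is the symmetrisation of a plain operad, namely the plain operad for sets. No step is really an obstacle here; the only thing to be careful about is checking that the definition of the slice genuinely produces the empty presentation for $I$ when both $J_0$ and $R_0$ are empty, which amounts to noting that the functor $f$ of Proposition \ref{map to the corresponding symm op} has empty domain in this case, so the composites $\tilde{e}_0 = f \circ e_0$ and $\tilde{q}_0 = f \circ q_0$ are automatically the unique map out of $\emptyset$.
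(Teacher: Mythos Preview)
Your proposal is correct and follows essentially the same argument as the paper: both observe via Notation~\ref{0-cells in higher cats} that $J_0=R_0=\emptyset$, conclude that the $0^{th}$ slice has presentation $(\emptyset,\emptyset,!,!)$, and then invoke Example~\ref{Presentation for I} to identify this as the operad $I$ for sets. Your additional unpacking of $\bs{J_0}$, $f$, and the appeal to Lemma~\ref{prop - presentations for plain} are fine but not strictly needed, since Example~\ref{Presentation for I} already identifies $I$ as the symmetrisation of the identity plain operad.
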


\begin{proof}
A globular operad $\bs{G}$ for higher categories has exactly one 0-cell, the identity $\text{id}_0$; see Notation \ref{0-cells in higher cats} and \cite[Definition 8.9]{Me}. It follows that there exists a presentation for $\bs{G}$ whose sets $J_0$ and $R_0$ of 0-cell generators and relations, respectively, are the empty sets. The $0^{th}$ slice of $\bs{G}$ is then the symmetric operad with presentation $(\emptyset, \emptyset, !, !)$. From Example \ref{Presentation for I}, we see that this is just the symmetric operad for sets. 
\end{proof}

\begin{example}\label{slices of T_3}
We calculate the first and second slices of the 3-globular operad $\bs{T_3}$ for strict 3-categories using the presentation given in Example \ref{pres for T_3}. From there, we see that the first slice of $\bs{T_3}$ is the symmetric operad $T$ consisting of the operadic composites of elements $i_1 \in T[0]$ and $h_1 \in T[2]$ subject to the following equalities.
\vspace{1mm}
\begin{enumerate}[i)]
\begin{minipage}{0.5\linewidth}
\item $h_1 \circ (\text{id},i_1) = \text{id}$
\item $h_1 \circ (i_1,\text{id}) = \text{id}$
\item $h_1 \circ (\text{id},h_1) = h_1 \circ (h_1, \text{id})$
\end{minipage}
\end{enumerate}
So the first slice is the operad for monoids; see Example \ref{pres for monoids}.

Similarly, the second slice of $\bs{T_3}$ is the symmetric operad $C$ consisting the operadic composites of elements $i_2 \in C[0]$ and $h_2, v_2 \in C[2]$ subject to the equalities below.
\vspace{1mm}
\begin{enumerate}[i')]
\begin{minipage}{0.4\linewidth}
\item $h_2 \circ (\text{id}, i_2 ) = \text{id}$
\item $h_2 \circ (i_2, \text{id}) = \text{id}$
\item $v_2 \circ (\text{id}, i_2) = \text{id}$
\item $v_2 \circ (i_2, \text{id}) = \text{id}$
\end{minipage}
\begin{minipage}{0.4\linewidth}
\item $h_2 \circ (\text{id}, h_2) = h_2 \circ (h_2, \text{id})$
\item $v_2 \circ (\text{id}, v_2) = v_2 \circ (v_2, \text{id})$
\item $h_2 \circ (v_2,v_2) = (2 \ 3) \cdot (v_2 \circ (h_2,h_2))$
\item $h_2 \circ (i_2,i_2) = i_2$
\end{minipage}
\end{enumerate}
Note that equality viii') follows from both i') and ii'). Example \ref{Eck-Hil Pres} shows that equalities v') and vi') follow from equalities i') - iv') and vii'), and that the second slice of $\bs{T_3}$ is precisely the symmetric operad for commutative monoids. 
\end{example}

Strict $(n+1)$-categories are the same as strict $n$-categories when truncated to $n$-dimensions, so the globular operads $\bs{T_n}$ and $\bs{T_{n+1}}$ for strict $(n+1)$-categories and strict $n$-categories, respectively, have the the same $k$-cells for all $k \leqslant n$. This means that a presentation for $\bs{T_{n+1}}$ contains a presentation for $\bs{T_n}$; we just omit the $(n+1)$-cell generators and relations. As a result, the $k^{th}$ slices of $\bs{T_n}$ and $\bs{T_{n+1}}$ coincide for all $k \leqslant n$. 

\begin{theorem}
The first slice of the $n$-globular operad $\bs{T_n}$ for strict $n$-categories is the plain operad for monoids, and  $k^{th}$ slice for all $k>1$ is the symmetric operad for commutative monoids.
\end{theorem}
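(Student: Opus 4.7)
The plan is to lift Example \ref{slices of T_3} from $\bs{T_3}$ to arbitrary $\bs{T_n}$ by constructing an explicit presentation for $\bs{T_n}$ that extends the pattern of Example \ref{pres for T_3}. In each dimension $k \leqslant n-1$, the presentation has one identity generator $i_k$ whose image under the underlying collection map is the degenerate $k$-pasting diagram, and, for each $j = 0, 1, \ldots, k-1$, a binary composition generator $h_{k,j}$ representing composition of $k$-cells along their $j$-dimensional boundary. The relations are the standard dimension-$k$ axioms for a strict $n$-category: two unit laws and an associativity law for each $h_{k,j}$, a binary interchange law between $h_{k,j}$ and $h_{k,j'}$ for each $j' < j$, and an identity-of-identities relation $h_{k,j} \circ (i_k, i_k) = i_k \circ (h_{k-1,j})$. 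Since $\bs{T_n}$ is contractible no generators or relations are required in top dimension, and by the observation immediately preceding the theorem the $k$-th slice is independent of $n$ for $n > k$.

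For the first slice, the relevant generators are $i_1$ and $h_{1,0}$, with the two unit laws and the associativity law --- precisely Example \ref{pres for monoids}. Because a $1$-pasting diagram in $\bs{1}$ admits only the left-to-right total order on its $1$-cells, the order induced by any $1$-cell of $\bs{J_1}$ agrees with the standard order, so $t_x = 0 \in S_{|j(x)|}$ for every $x$. Consequently the functions $\tilde{e}_1$ and $\tilde{q}_1$ land in $\{0\} \times J_1^{\infty}[n]$ for each $n$, and Lemma \ref{prop - presentations for plain} identifies the first slice as (the symmetrization of) the plain operad for monoids.

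For the $k$-th slice with $k \geqslant 2$, the generators are $i_k$ (arity $0$) together with $k$ binary operations $h_{k,0}, \ldots, h_{k,k-1}$. The identity relations pass to the slice unchanged, producing the unit axioms for each $h_{k,j}$ and forcing $h_{k,j}\circ(i_k,i_k)=i_k$. The critical step is the translation of interchange: as in Example \ref{example of f for strict J_2}, applying the map $f$ from Proposition \ref{map to the corresponding symm op} gives $f(h_{k,j} \circ (h_{k,j'}, h_{k,j'})) = (0, \, h_{k,j}\circ(h_{k,j'},h_{k,j'}))$ while $f(h_{k,j'} \circ (h_{k,j}, h_{k,j})) = ((2\ 3), \, h_{k,j'}\circ(h_{k,j},h_{k,j}))$, since composing along the higher-dimensional boundary first permutes the middle two $k$-cell slots of the resulting $k$-pasting diagram relative to the standard order. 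Thus each interchange axiom becomes the Eckmann--Hilton relation $h_{k,j}\circ(h_{k,j'},h_{k,j'}) = (2\ 3)\cdot\big(h_{k,j'}\circ(h_{k,j},h_{k,j})\big)$ of Example \ref{Eck-Hil Pres}. Iterating Example \ref{Eck-Hil Pres} pairwise over $(j,j')$ --- noting that the derivation $i = i'$ in that example is vacuous here because the two operations in each pair already share the common unit $i_k$ --- identifies all the $h_{k,j}$ with a single commutative, associative operation with unit $i_k$. Comparing with the Eckmann--Hilton presentation of the commutative-monoid operad in Example \ref{Eck-Hil Pres} identifies the $k$-th slice as the symmetric operad for commutative monoids.

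The main obstacle will be the bookkeeping in the iterated Eckmann--Hilton step: one must check that once all the $h_{k,j}$ have been identified, the remaining associativity and unit relations of the presentation are consequences of (rather than strengthenings of) the commutative-monoid axioms, so that the $k$-th slice is no smaller than the commutative-monoid operad. This reduces to the standard fact that associativity and commutativity for one operation imply the same for any operation that coincides with it. Invariance under the choice of presentation (Theorem \ref{slices don't depend on presentations}) then removes any dependence on the specific presentation of $\bs{T_n}$ constructed above.
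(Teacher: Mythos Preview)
Your approach is essentially the same as the paper's: build the evident presentation of $\bs{T_n}$ with one nullary generator $i_k$ and $k$ binary generators in each dimension $k<n$, read off the slice relations, and then invoke Eckmann--Hilton. The paper's proof is terser but structurally identical, and your extra remarks (that the identity-of-identities relation becomes redundant, and that the pairwise Eckmann--Hilton collapse needs no further hypotheses because all operations already share the unit $i_k$) are correct elaborations.

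There is, however, a small factual slip in your computation of $t_x$. With $j'<j$, the composite $h_{k,j}\circ(h_{k,j'},h_{k,j'})$ corresponds in dimension $2$ to $v_2\circ(h_2,h_2)$, which by Example~\ref{examples of t_x} carries $t_x=(2\ 3)$, not $0$; it is $h_{k,j'}\circ(h_{k,j},h_{k,j})$ (composing along the higher boundary first, i.e.\ $h_2\circ(v_2,v_2)$) that induces the standard order. In other words, your sentence ``composing along the higher-dimensional boundary first permutes the middle two $k$-cell slots'' is exactly backwards. This does not damage the argument, since the two forms of the relation are equivalent under acting by $(2\ 3)$ on both sides, and the resulting Eckmann--Hilton equation of Example~\ref{Eck-Hil Pres} is symmetric in the two operations; but you should correct the attribution of the permutation.
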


\begin{proof}
The presentation for $\bs{T_3}$ given in Example \ref{pres for T_3} can be generalised to construct a presentation for $\bs{T_n}$ wherein the set $J_k$ of $k$-cell generators consists of an element $i_k$ providing the $k$-cell identities, and an element $b_{kl}$ for each $0 \leqslant l < k$ providing an operation composing a pair of $k$-cells along an $l$-cell boundary. The $k$-cell relations will be those yielding the various unit, associativity and interchange axioms.
The $k^{th}$ slice of $\bs{T_n}$ is then the symmetric operad $C$ consisting of the operadic composites of elements $i_k \in C[0]$ and $b_{k1},...,b_{kk-1} \in C[2]$, subject to the following equalities for all $0 \leqslant l < k$.
\vspace{1mm}
\begin{enumerate}[i)]
\begin{minipage}{0.8\linewidth}
\item $b_{kl} \circ (i_k, \, \text{id}) = \text{id}$
\item $b_{kl} \circ (\text{id}, \, i_k) = \text{id}$
\item $b_{kl} \circ (\text{id}, \, b_{kl}) = b_{kl} \circ (b_{kl}, \, \text{id})$
\item $b_{kl} \circ (b_{ki} , \, b_{ki} ) = (2 \ 3) \cdot ( b_{ki}  \circ (b_{kl}, \, b_{kl}))$ for all $i \neq l$
\end{minipage}
\end{enumerate}
Invoking the Eckmann-Hilton argument once again, we see that the $k^{th}$ slice of $\bs{T_n}$ is the plain operad for monoids if $k=1$, and is the symmetric operad for commutative monoids if $k>1$.
\end{proof}

In order to complete our definition of slices, we need to prove that the slices of a globular operad $\bs{G}$ do not depend on the choice of presentation for $\bs{G}$. For this, we first require some additional details about presentations for globular operads; once again, we note that a more comprehensive overview can be found in \cite{Me}. 

\begin{definition}\label{truncationfunctor} Let $k$ be an integer satisfying $-1 \leqslant k \leqslant n$. The $k^{th}$ \textit{truncation functor} $Tr_k:\mathbf{GOp} \longrightarrow \mathbf{GOp}$ is the functor sending a globular operad $\bs{G}$ to the globular operad whose $l$-cells are those of $\bs{G}$ for all $l \leqslant k$, and whose only $n$-cell is the identity $n$-cell $\text{id}_n$ for all $n > k$.
\end{definition}

\begin{notation}\label{counit} Let $\epsilon_k:Tr_k \Rightarrow 1$ denote the natural transformation whose components are the inclusions.
\end{notation}

\begin{definition} A presentation for a globular operad is a \textit{$k$-presentation} if it contains no $n$-cell generators, and therefore no $n$-cell relations, for all $n>k$. 
\end{definition}

In \cite{Me} presentations are defined by constructing a category $\bs{k} \mhyphen \bf{Pres}$ of $k$-presentations for globular operads for each integer $k \geqslant -1$, together with an adjunction
\begin{center}
\begin{tikzpicture}[node distance=3cm, auto]

\node (A) {$\bs{k} \mhyphen \mathbf{Pres}$};
\node (B) [right of=A] {$\mathbf{GOp}$};

\draw[->, bend left=35] (A) to node {$F_k$} (B);
\draw[->, bend left=35] (B) to node {$V_k$} (A);

\node (W) [node distance=1.5cm, right of=A] {$\perp$};

\end{tikzpicture}
\end{center}
satisfying $F_kV_k=Tr_k$, and with counit $\epsilon_k$. These definitions are recursive; for $k=-1$, we define $\bs{k} \mhyphen \mathbf{Pres}$ to be the terminal category and $F_{-1}$ to be the functor picking out the initial globular operad $\bs{1}$ consisting only of the identity cells. For $k \geqslant 0$, we construct $\bs{k} \mhyphen \bf{Pres}$ and the accompanying adjunction using the category $(\bs{k-1}) \mhyphen \mathbf{Pres}$ and the adjunction $F_{k-1} \dashv V_{k-1}$. 

Recall the $k$-ball $\bs{B_k}$ from Definition \ref{k-ball}. The free functor $F_k$ is defined using the fact that a function $J_k \longrightarrow \mathbf{GSet} (\bs{B_k}, \bs{1}^*)$ corresponds to a morphism $J_k \cdot \bs{B_k} \longrightarrow \bs{1}^*$ of globular sets, where $J_k \cdot \bs{B_k}$ denotes the coproduct of $|J_k|$ copies of  $\bs{B_k}$, under the adjunction
\begin{center}
\begin{tikzpicture}[node distance=3cm, auto]

\node (A) {$\mathbf{Set}$};
\node (B) [right of=A] {$\mathbf{GSet}$.};

\draw[->, bend left=35] (A) to node {$- \cdot \bs{B_k}$} (B);
\draw[->, bend left=35] (B) to node {$\mathbf{GSet}(\bs{B_k}, -)$} (A);

\node (W) [node distance=1.5cm, right of=A] {$\perp$};

\end{tikzpicture}
\end{center}

\begin{definition}\label{The free pres functor.} 
The \textit{free functor} $F_k: \bs{k} \mhyphen \mathbf{Pres} \longrightarrow \mathbf{GOp}$ sends a $k$-presentation $P_k$ with sets $J_k$ and $R_k$ of $k$-cell generators and relations, respectively, to the coequaliser
\begin{center}
\begin{tikzpicture}[node distance=2.5cm, auto]

\node (A) {$F_{k}(P_k)$};
\node (B) [left of=A] {$\bs{J_k}$};
\node (C) [node distance=2.8cm, left of=B] {$R_k \cdot F(\bs{B_k})$};

\draw[transform canvas={yshift=0.5ex},->] (C) to node {$\hat{e}_k$} (B);
\draw[transform canvas={yshift=-0.5ex},->] (C) to node [swap] {$\hat{q}_k$} (B);

\draw[->] (B) to node {} (A);

\end{tikzpicture}
\end{center}
in $\mathbf{GOp}$, where $e_k, q_k : R_k \longrightarrow \mathbf{GSet}(\bs{B_k}, \bs{J_k})$ are as in part (3) of Remarks \ref{k-free remarks} and $F$ is the free functor $\mathbf{GColl} \longrightarrow \mathbf{GOp}$ from the category of globular collections defined in the proof of Theorem \ref{T-op adjunction}.
\end{definition}

\begin{definition}\label{k-pres for G}
A $k$-\textit{presentation} for a globular operad $\bs{G}$ is a $k$-presentation $P_k$ together with an isomorphism $F_k(P_k) \longrightarrow Tr_k(\bs{G})$.
\end{definition}

Observe that since $F_kV_k = Tr_k$, it follows from Definition \ref{k-pres for G} that given a globular operad $\bs{G}$, $V_k(\bs{G})$ is a $k$-presentation for $\bs{G}$.

\begin{theorem}\label{slices don't depend on presentations} The slices of a globular operad $\bs{G}$ are independent of the choice of presentation for $\bs{G}$.
\end{theorem}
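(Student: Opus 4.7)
The strategy is to give a presentation-independent characterisation of the $k$-th slice $S_k$ and to show that the slice built from any presentation agrees with it. The natural intrinsic description of $S_k$ is as the symmetric operad governing the algebraic structure on the $k$-cells of those $\bs{G}$-algebras whose $l$-cells are identities for all $l<k$.

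Concretely, for a globular operad $\bs{G}$ I would define a canonical analytic monad $\widehat{S}_k(\bs{G})$ on $\mathbf{Set}$ as follows. Let $\bs{G} \mhyphen \mathbf{Alg}_k \subset \bs{G} \mhyphen \mathbf{Alg}$ denote the full subcategory of $\bs{G}$-algebras having a single $l$-cell for each $l<k$. The functor $\bs{G} \mhyphen \mathbf{Alg}_k \longrightarrow \mathbf{Set}$ sending an algebra to its set of $k$-cells admits a left adjoint, obtained by composing the free $\bs{G}$-algebra functor with the quotient that identifies all $l$-cells in dimensions below $k$; composing gives a monad $\widehat{S}_k$ whose underlying endofunctor is analytic, with $\widehat{S}_k[n]$ indexed by $k$-cells of $\bs{G}$ whose underlying pasting shape contains exactly $n$ copies of the $k$-cell of $\bs{1}$ and the $S_n$-action arising from reorderings of these $k$-cells. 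Crucially, $\widehat{S}_k(\bs{G})$ depends only on $Tr_k(\bs{G})$, since no cells in dimensions above $k$ act on algebras concentrated in dimensions $\leqslant k$, and hence only on $\bs{G}$ itself with no reference to any presentation.

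Given any presentation $P$ of $\bs{G}$ I would then identify the slice $S_k$ built from $P$ with $\widehat{S}_k(\bs{G})$. By the universal property of $S_k$ as the coequaliser of $F(R_k) \rightrightarrows F(J_k)$ in $\mathbf{SymmOp}$, this reduces to constructing a comparison morphism $F(J_k) \longrightarrow \widehat{S}_k$ that equalises the parallel pair, and then verifying the induced map $S_k \longrightarrow \widehat{S}_k$ is an isomorphism. The comparison sends each $k$-cell generator $\Lambda \in J_k$ to the operation on $k$-cells that it defines on algebras; its consistent extension to all free composites is exactly what the function $f$ of Proposition \ref{map to the corresponding symm op} encodes, with the permutations $t_x$ of Definition \ref{def of t_x} matching the reorderings of pasting diagrams that generate the $S_n$-actions in $\widehat{S}_k$. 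The parallel pair is coequalised because $\bs{G}$ is by construction obtained from $\bs{J_k}$ by imposing the relations $R_k$ together with higher relations which do not affect $\widehat{S}_k$. The induced morphism $S_k \longrightarrow \widehat{S}_k$ is surjective because every element of $\widehat{S}_k[n]$ arises from some $k$-cell of $\bs{J_k}$ via the description in Remark \ref{The k-cells of J_k}, and injective because two elements of $F(J_k)$ identified in $\widehat{S}_k$ correspond to $k$-cells of $\bs{J_k}$ that are equal in $\bs{G}$ and hence, by the coequaliser description $F_k(\tilde{P}_k) \cong Tr_k(\bs{G})$, are related by a finite chain of applications of $R_k$.

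The main obstacle will be reconciling the combinatorial data extracted by the map $f$ of Proposition \ref{map to the corresponding symm op} -- in particular the permutations $t_x$ arising from the standard-order convention of Definition \ref{standard ordering} -- with the intrinsic $S_n$-actions in $\widehat{S}_k$ coming from relabellings of $k$-cells in pasting diagrams, and checking that these two sources of symmetry are compatible with the equivariance axioms \eqref{SymmOpEq1} and \eqref{SymmOpEq2}. Once this bookkeeping is carried out, independence of the slice from the chosen presentation is forced by the manifest intrinsic nature of $\widehat{S}_k(\bs{G})$.
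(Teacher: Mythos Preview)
Your strategy --- characterising the slice intrinsically as the monad on $\mathbf{Set}$ governing $k$-cells of $\bs{G}$-algebras with trivial lower dimensions --- is genuinely different from the paper's and conceptually attractive, but your description of $\widehat{S}_k$ contains a concrete error. You assert that $\widehat{S}_k[n]$ is the set of $k$-cells of $\bs{G}$ whose arity pasting shape contains exactly $n$ copies of the $k$-cell of $\bs{1}$, with $S_n$-action by reordering. This already fails for $\bs{G} = \bs{T}$ and $k=2$: the second slice is the commutative monoid operad with $C[2]=\{\star\}$, yet there are infinitely many $2$-cells of $\bs{T}$ with $|g(\cdot)|_2 = 2$ (any $2$-pasting shape in $\bs{1}$ containing two $2$-cells, with arbitrary degenerate padding), and no $S_2$-action on that set produces a singleton. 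The point is that forcing the $l$-cells for $l<k$ to be identities imposes \emph{new} relations among $k$-cell operations --- Eckmann--Hilton is the prototype --- which are invisible in the set of $k$-cells of $\bs{G}$ alone. Your $\widehat{S}_k$ is a nontrivial quotient of what you wrote, and identifying that quotient is essentially the content of the theorem. Your injectivity argument inherits this problem: you pass freely between elements of $F(J_k)$ and $k$-cells of $\bs{J_k}$, but the map $f$ of Proposition~\ref{map to the corresponding symm op} is neither injective nor surjective (for instance $(0,\,v_2 \circ (h_2,h_2))$ is not in its image in Example~\ref{example of f for strict J_2}), so ``identified in $\widehat{S}_k$ implies equal in $\bs{G}$ implies related by $R_k$'' does not go through as stated.

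The paper avoids constructing an intrinsic target altogether. It fixes the canonical presentation $V_k(\bs{G})$ supplied by the right adjoint of the adjunction $F_k \dashv V_k$ in Definition~\ref{The free pres functor.}, and compares an arbitrary presentation $P_k$ to it via the unit $P_k \to V_kF_k(P_k) = V_k(\bs{G})$. This unit furnishes inclusions of generators and relations making the two coequaliser diagrams in $\mathbf{GOp}$ commute; since both compute $Tr_k(\bs{G})$ with the identity map between them, the induced map $S_k \to S_{(\bs{G},k)}$ of $\mathbf{SymmOp}$-coequalisers is an isomorphism. Your semantic route, if the description of $\widehat{S}_k$ were repaired and its analyticity established, would explain \emph{what} the slice is; the paper's syntactic comparison is less illuminating but sidesteps that analysis entirely.
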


\begin{proof} Let $P_k$ be a $k$-presentation for $\bs{G}$. We show that the $k^{th}$ slice defined by $P_k$ is isomorphic to the $k^{th}$ slice defined by the $k$-presentation $V_kF_k(P_k) = V_k(\bs{G})$. Let $J_{(\bs{G},\bs{k})}$ denote the set of $k$-cell generators of $V_k(\bs{G})$, and let $\bs{J}_{(\bs{G},\bs{k})}$ denote the $k$-free globular operad equal to $\bs{G}$ in dimensions $<k$, and with $k$-cells given by the free operadic composites of the set $J_{(\bs{G},\bs{k})}$ of $k$-cell generators. Similarly, let $R_{(\bs{G},k)}$ denote the set of $k$-cell relations in $V_k(\bs{G})$, and let $e_{(\bs{G},k)}$ and $q_{(\bs{G},k)}$ denote the parallel morphisms
\begin{center}
\begin{tikzpicture}[node distance=4.5cm, auto]

\node (A') {$R_{(\bs{G},k)}$};
\node (B') [right of=A'] {$\mathbf{GSet} ( \bs{B_k}, \bs{J}_{(\bs{G,\bs{k})}} )$};

\draw[transform canvas={yshift=0.75ex},->] (A') to node {$e_{(\bs{G},k)}$} (B');
\draw[transform canvas={yshift=-0.25ex},->] (A') to node [swap] {$q_{(\bs{G},k)}$} (B');

\end{tikzpicture}
\end{center}
The unit of the adjunction $F_k \dashv V_k$ at $P_k$ is made up in part of inclusion functions $J_k \longrightarrow J_{(\bs{G},k)}$ and $R_k \longrightarrow R_{(\bs{G},k)}$, the former of which defines a morphism $\bs{J_k} \longrightarrow \bs{J}_{(\bs{G},\bs{k})}$ of globular operads, such that the induced diagram below commutes both with respect to $e_k$ and $e_{(\bs{G},k)}$, and with respect to $q_k$ and $q_{(\bs{G},k)}$.
\begin{center}
\begin{tikzpicture}[node distance=4.8cm, auto]

\node (A) {$R_k$};
\node (B) [right of=A] {$\mathbf{GSet}(\bs{B_k}, \bs{J_k})$};
\node (C) [node distance=4.2cm, right of=B] {$UF(J_k)$};

\draw[transform canvas={yshift=0.75ex},->] (A) to node {$e_k$} (B);
\draw[transform canvas={yshift=-0.25ex},->] (A) to node [swap] {$q_k$} (B);
\draw[->] (B) to node {$f$} (C);

\node (A') [node distance=1.95cm, below of=A] {$R_{(\bs{G},k)}$};
\node (B') [right of=A'] {$\mathbf{GSet}(\bs{B_k}, \bs{J}_{(\bs{G},\bs{k})})$};
\node (C') [node distance=4.2cm, right of=B'] {$UF(J_{(\bs{G},k)})$};

\draw[transform canvas={yshift=0.75ex},->] (A') to node {$e_{(\bs{G},k)}$} (B');
\draw[transform canvas={yshift=-0.25ex},->] (A') to node [swap] {$q_{(\bs{G},k)}$} (B');
\draw[->] (B') to node [swap] {$f$} (C');

\draw[->] (A) to node {} (A');
\draw[->] (B) to node {} (B');
\draw[->] (C) to node {} (C');

\end{tikzpicture}
\end{center}
Consider the commutative diagram,
\begin{center}
\begin{tikzpicture}[node distance=4.2cm, auto]

\node (A) {$R_k \cdot F(\bs{B_k})$};
\node (B) [right of=A] {$\bs{J_k}$};

\draw[transform canvas={yshift=0.75ex},->] (A) to node {$\hat{e}_k$} (B);
\draw[transform canvas={yshift=-0.25ex},->] (A) to node [swap] {$\hat{q}_k$} (B);

\node (A') [node distance=1.95cm, below of=A] {$R_{(\bs{G},k)} \cdot F(\bs{B_k})$};
\node (B') [right of=A'] {$\bs{J}_{(\bs{G},\bs{k})}$};

\draw[transform canvas={yshift=0.75ex},->] (A') to node {$\hat{e}_{(\bs{G},k)}$} (B');
\draw[transform canvas={yshift=-0.25ex},->] (A') to node [swap] {$\hat{q}_{(\bs{G},k)}$} (B');

\draw[->] (A) to node {} (A');
\draw[->] (B) to node {} (B');

\end{tikzpicture}
\end{center}
in $\mathbf{GOp}$. Since $P_k$ and $V_k(\bs{G})$ are both presentations for $\bs{G}$, the coequaliser of both $\hat{e}_k$ and $\hat{q}_k$, and of $\hat{e}_{(\bs{G},k)}$ and $\hat{q}_{(\bs{G},k)}$, is $Tr_k(\bs{G})$. Moreover, the induced morphism $Tr_k(\bs{G}) \longrightarrow Tr_k(\bs{G})$ is the identity. 
It follows that the morphism $S_k \longrightarrow S_{(\bs{G},k)}$ induced from the coequaliser of $\tilde{e}_k$ and $\tilde{q}_k$ to the coequaliser of $\tilde{e}_{(\bs{G},k)}$ and $\tilde{q}_{(\bs{G},k)}$ in the diagram 
\begin{center}
\begin{tikzpicture}[node distance=4cm, auto]

\node (A) {$F(R_k)$};
\node (B) [right of=A] {$F(J_k)$};

\draw[transform canvas={yshift=0.75ex},->] (A) to node {$\tilde{e}_k$} (B);
\draw[transform canvas={yshift=-0.25ex},->] (A) to node [swap] {$\tilde{q}_k$} (B);

\node (A') [node distance=1.95cm, below of=A] {$F(R_{(\bs{G},k)})$};
\node (B') [right of=A'] {$F(J_{(\bs{G},k)})$};

\draw[transform canvas={yshift=0.75ex},->] (A') to node {$\tilde{e}_{(\bs{G},k)}$} (B');
\draw[transform canvas={yshift=-0.25ex},->] (A') to node [swap] {$\tilde{q}_{(\bs{G},k)}$} (B');

\draw[->] (A) to node {} (A');
\draw[->] (B) to node {} (B');

\end{tikzpicture}
\end{center}
in $\mathbf{SymmOp}$ in an isomorphism. Since $S_k$ is the $k^{th}$ slice of $\bs{G}$ arising from $P_k$, and $S_{(\bs{G},k)}$ is the $k^{th}$ slice of $\bs{G}$ arising from $V_k(\bs{G})$, we have proved our result.
\end{proof}

\subsection{Examples}\label{examples of slices subsection}

In \cite{batanin}, Batanin hypothesises that the slices of the globular operad for a theory of higher category can tell us whether those higher categories are equivalent to fully weak ones. In particular, he gives the following conjecture.

\begin{conj}\label{Conjecture}\cite{batanin} Let $\bs{G_n}$ be a contractible $n$-globular operad. The algebras for $\bs{G_n}$ are equivalent to fully weak $n$-categories if the $k^{th}$ slice of $\bs{G_n}$ is a plain operad for all $k<n$.
\end{conj}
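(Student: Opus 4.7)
The plan is to route the argument through the theory of computads, following the two-step decomposition of Batanin's original formulation quoted in the introduction: first show that (i) if $S_k$ is a plain operad for every $k<n$, then the category of computads for $\bs{G_n}$-algebras is a presheaf category; and second that (ii) presheaf computads imply $\bs{G_n}\mhyphen\mathbf{Alg}$ is equivalent to the category of fully weak $n$-categories. Step (i) is the content foreshadowed in the introduction as part of the companion work \cite{RG2}, while step (ii) is the substantive part of Batanin's claim, connecting a presheaf-theoretic property to a higher-categorical equivalence.

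For step (i), the plan is to induct on dimension using the concrete definition of slices given in Section \ref{slices subsection}. The $k$-free operad $\bs{J_k}$ governs precisely the free composition of $k$-cells inside a $\bs{G_n}$-computad, and the map $f$ of Proposition \ref{map to the corresponding symm op} records the permutation $t_x$ by which a free placement of generators differs from the standard left-to-right placement. When $S_k$ is plain, Lemma \ref{prop - presentations for plain} tells us that the presentation for $S_k$ can be chosen so that every $t_x$ is the identity, meaning every composite $k$-cell in a free computad admits a canonical left-to-right ordering of its constituent generators. This rules out the Makkai--Zawadowski obstruction sketched in the introduction; the inductive step then adjoins the $(k{+}1)$-cells via a colimit of representables indexed by pasting diagrams, which preserves the presheaf property.

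For step (ii), the plan is to use Garner's construction of weak functors via computads \cite{RG} to build a comparison weak homomorphism from the free fully weak $n$-category on a computad to the free $\bs{G_n}$-algebra on the same computad, and then upgrade it to an equivalence. The presheaf structure produced by step (i) is the crucial input: it lets one define the comparison on basic cells and extend by freeness with no coherence ambiguity. Contractibility of $\bs{G_n}$ supplies, via Definition \ref{contractible n-glob ops}, the filler cells needed to invert the comparison dimension by dimension, yielding the equivalence.

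The main obstacle will be step (ii). Even with a presheaf category of computads in hand, establishing an equivalence with fully weak $n$-categories is delicate because, as the introduction emphasises, there is no hands-on definition of fully weak $n$-category in the literature for $n>4$. The natural approach is to interpret \emph{fully weak $n$-category} operadically, as an algebra for a freely-generated contractible $n$-globular operad in the sense of Batanin--Leinster, and then compare two contractible operads by producing a zig-zag of maps whose existence is forced by the contractions on each side. Step (i), by contrast, is more tractable but still requires careful bookkeeping to promote the pointwise triviality of the $t_x$ into a global presheaf property that is stable under every operadic composition arising in computads.
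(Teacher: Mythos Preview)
The statement you are trying to prove is labelled \emph{Conjecture} in the paper, and the paper does not prove it. It is Batanin's conjecture, restated here as motivation for the definition of slices; the paper's contribution is to make the notion of slice precise and to compute slices in examples, not to establish the conjecture itself. So there is no proof in the paper to compare your attempt against.

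Your proposal is not a proof but a research outline, and you essentially acknowledge this yourself. You correctly identify the two-step decomposition from the introduction, and your plan for step (i) tracks what the paper says will appear in the companion work \cite{RG2}; but that work is cited as \emph{in preparation}, so even step (i) is not available here. Step (ii) is the genuinely open part: you note that there is no hands-on definition of fully weak $n$-category for $n>4$, and then propose to interpret ``fully weak'' operadically and compare contractible operads via contractions. That is a reasonable strategy to explore, but it is not an argument --- the existence of a zig-zag of operad maps does not by itself give an equivalence of algebra categories, and the passage from ``presheaf computads'' to ``equivalent to weak $n$-categories'' is precisely the substantive claim that remains conjectural. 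In short, your outline reproduces the shape of Batanin's conjecture rather than discharging it; what is missing is exactly what is missing from the literature.
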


In this section, we calculate the slices of the globular operad for weak higher categories, as well as the slices of the globular operads for two theories of semi-strict higher category. The first of these are $n$-categories with weak interchange laws, and the second are $n$-categories with weak identities in dimensions $\leqslant n-2$. Note that for $n=2$, both of these are precisely strict 2-categories. 

\begin{example}\label{slices for W_3} We calculate the first and second slices of the 3-globular operad $\bs{W_3}$ for tricategories using the presentation given in Example \ref{W_3}. From there, we see that the first slice of $\bs{W_3}$ is the symmetric operad $W_1$ consisting of the free operadic composites of elements $i_1 \in W_1[0]$ and $h_1 \in W_1[2]$. Thus, the first slice of $\bs{W_3}$ is the plain operad for pointed magmas. This is what we would expect, as the 1-cells of a tricategory with a single object form a pointed magma; the binary operation is given by 1-cell composition and the single identity 1-cell is the point.

Similarly, the second slice of $\bs{W_3}$ is the symmetric operad $W_2$ consisting of the free operadic composites of elements $i_2$, $l_2$, $l'_2$, $r_2$, $r'_2$, $a_2$, $a'_2 \in W_2[0]$ and $h_2$, $v_2 \in W_2[2]$. Thus, the second slice of $\bs{W_3}$ is the plain operad for 7-pointed double magmas. Again, this is what we would expect, as the 2-cells of a tricategory with a single object and a single 1-cell, which is necessarily the identity 1-cell, form a 7-pointed double magma.
\end{example}

In dimensions $\leqslant 1$, bicategories and tricategories are identical, so the $n$-globular operads $\bs{W_2}$ and $\bs{W_3}$ for bicategories and tricategories, respectively, have the same 0 and 1-cells. More generally, weak $n$-categories and weak $(n{+}1)$-categories should be identical in dimensions $< n$, so the operads $\bs{W_n}$ and $\bs{W_{n+1}}$ for weak $n$-categories and weak $(n{+}1)$-categories, respectively, should have the same $k$-cells for all $k < n$. This means that the $k^{th}$ slices of the globular operads $\bs{W_n}$ and $\bs{W_{n+1}}$ coincide for all $k < n$.

The highest dimension for which we can give a precise presentation for the globular operad $\bs{W_n}$ for fully weak biased\footnote{A biased $n$-category is one where the composition operations on $k$-cells are binary operations; we take two $k$-cells and compose them into another $k$-cell. An unbiased higher category, meanwhile, has a specified operation composing any pasting diagram of $k$-cells directly into a single $k$-cell; see, for example, \cite[\S 7]{Me}.} $n$-categories is $n=4$; this is done in \cite[Definition 9.6]{Me}. The obstacle when trying to construct a presentation for $\bs{W_n}$ for $n>4$ is that it is unknown what kinds of coherence $k$-cells should be present in weak $n$-categories for any $k>3$, and our presentations are constructed by specifying a $k$-cell generator for each binary composition operation on $k$-cells and for each kind of coherence $k$-cell. However, as demonstrated in Example \ref{slices for W_3} above, when taking the slice of a globular operad for higher categories, each kind of coherence cell satisfying no axioms with respect to composition simply becomes a point. 

\begin{definition} A $k$-tuple magma is a set equipped with $k$ binary operations, satisfying no axioms.
\end{definition}
 
\begin{theorem}\label{slices for W_n}
For $k<n$, the $k^{th}$ slice of the $n$-globular operad $\bs{W_n}$ for weak $n$-categories is the plain operad for many-pointed $k$-tuple magmas.
\end{theorem}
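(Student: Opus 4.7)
My plan is to invoke Theorem \ref{slices don't depend on presentations}, which lets me choose any convenient $k$-presentation for $\bs{W_n}$ when computing the $k^{th}$ slice. I would build a $k$-free presentation whose $k$-cell generators $J_k$ consist of $k$ binary composition operations $b_{k,0}, \ldots, b_{k,k-1}$, one for each $l$-boundary along which two $k$-cells may be composed (so $|j(b_{k,l})| = 2$), together with a family of nullary coherence generators $i_k$, together with unitors, associators, interchangers, and their weak inverses whose images under the underlying collection map are $k$-pasting diagrams in $\bs{1}$ containing no non-degenerate $k$-cell (so $|j(-)| = 0$). This mirrors the pattern of Example \ref{W_3}, extended to all dimensions $k<n$ and all $l$-boundaries.

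The crucial structural claim to establish is that no $k$-cell relations need appear in such a presentation when $k < n$. In a fully weak $n$-category every would-be equation at dimension $k$ (associativity, unit, interchange, etc.)\ is weakened to an invertible $(k+1)$-cell, hence encoded by a higher-dimensional generator rather than by an equation among $k$-cells. Granting this, the set $R_k$ is empty, and the $k^{th}$ slice is just the free symmetric operad on the collection $|j(-)| : J_k \longrightarrow \mathbb{N}$. Since $R_k = \emptyset$, the hypothesis of Lemma \ref{prop - presentations for plain} is vacuously satisfied, so the slice is the symmetrisation of a plain operad — concretely, the free plain operad on $k$ binary generators and a set of nullary generators indexed by the coherence cells, i.e.\ the plain operad for (many-pointed) $k$-tuple magmas, exactly matching the pattern verified by hand at $k=1,2$ in Example \ref{slices for W_3}.

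The hard part will be justifying the emptiness of $R_k$ uniformly in $n$, since a precise presentation for $\bs{W_n}$ is only constructed explicitly in \cite{Me} for $n \leqslant 4$. To address this, I would argue at the level of the truncation $Tr_k(\bs{W_n})$ itself. By contractibility, every parallel pair of $(k-1)$-dimensional composites is connected by some specified $k$-cell generator, giving the list above. Any nontrivial equation imposed among these generators would, via an Eckmann--Hilton style argument like the one in Remark \ref{Eck-Hil remark} together with the fact that $\bs{W_n}$ has a single $0$-cell and a single $1$-cell in the relevant truncation, force commutativity or degeneracies at dimension $k$ that are incompatible with the fully weak character of $\bs{W_n}$. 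Any presentation of $\bs{W_n}$ consistent with its universal property as a fully weak theory therefore has $R_k = \emptyset$, and Theorem \ref{slices don't depend on presentations} ensures that the computed slice does not depend on the residual freedom in choosing this presentation.
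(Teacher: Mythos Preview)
Your first two paragraphs follow essentially the same approach as the paper: generalise the presentation of $\bs{W_3}$ to $\bs{W_n}$ with $k$ binary generators $b_{k,l}$ of arity $2$ and a family of nullary coherence generators, observe that $R_k = \emptyset$ for $k<n$ because weakness replaces every axiom by a higher coherence cell, and conclude that the $k^{th}$ slice is the free plain operad on this data. The paper's proof is exactly this, stated in one paragraph; it simply asserts that $R_k = \emptyset$ as part of how $\bs{W_n}$ is (or would be) presented, acknowledging just before the theorem that a precise presentation exists only for $n \leqslant 4$.

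Your third paragraph, however, is both unnecessary and flawed. The paper does not attempt to \emph{derive} $R_k = \emptyset$ from some universal property of $\bs{W_n}$; it takes this as definitional for the biased weak theory. Your proposed Eckmann--Hilton argument overreaches: Eckmann--Hilton requires very specific relations (unit laws for two operations plus interchange), and an arbitrary nontrivial relation among the generators need not force commutativity or any other degeneracy --- for instance, imposing only associativity of one $b_{k,l}$ would not. The statement that $\bs{W_n}$ ``has a single $0$-cell and a single $1$-cell in the relevant truncation'' is also incorrect: $\bs{W_n}$ has a single $0$-cell but infinitely many $1$-cells (the free composites of $i_1$ and $h_1$). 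You appear to be conflating the operad with an algebra for it. Drop the third paragraph and your argument matches the paper's.
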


\begin{proof}
The presentation for $\bs{W_3}$ given in Example \ref{W_3} can be generalised to construct a presentation for $\bs{W_n}$ wherein for all $k<n$, the set $J_k$ of $k$-cell generators consists of an element for each kind of coherence $k$-cell present in fully weak $n$-categories, together with an element $b_{kj}$ for each $0 \leqslant l < k$ providing an operation composing a pair of $k$-cells along an $l$-cell boundary; and the set $R_k$ of $k$-cell relations is the empty set.
The $k^{th}$ slice of $\bs{W_n}$ is then the free symmetric operad $W_k$ on specified elements in $W_k[0]$, and elements $b_{k1},...,b_{kk-1} \in W_k[2]$. This is precisely the plain operad for many-pointed $k$-tuple magmas.
\end{proof}

We now turn our attention to higher categories with weak interchange laws, but that are strictly associative and unital. Note that 3-categories with weak interchange laws are not the same as Gray categories. Gray categories are strict 3-categories with no direct horizontal composite for 2-cells. On the other hand, 3-categories with weak interchange laws have direct horizontal composites, but the 2-dimensional interchange law holds only up to coherence 3-cells. See \cite{RG1} for a presentation for the 3-globular operad for Gray categories, as well as a comparsion to the operad for 3-categories with weak interchange laws. 

Recall that by Lemma \ref{contractible n-cells}, if an $n$-globular operad $\bs{G_n}$ is contractible, then there is no need to specify any $n$-cell generators or relations in a presentation for $\bs{G_n}$. For the remainder of this section, when constructing a presentation for some $n$-globular operad $\bs{G_n}$ we will declare $\bs{G_n}$ to be contractible. The proofs that the globular operads here are actually contractible can be found in \cite[\S 9.2]{Me}

\begin{definition}\label{H_4} The 4-globular operad $\bs{H_4}$ for 4-categories with weak interchange laws is the \textit{contractible} 4-globular operad with the same 0 and 1-cells as $\bs{T_3}$ (see Example \ref{pres for T_3}), and

\begin{list}{•}{}

\item 2-cells consisting of the operadic composites of 2-cells $i_2$, $h_2$ and $v_2$ whose images under the underlying collection map are as in the presentation for $\bs{T_3}$, subject to the following equalities,
\vspace{1mm}
\begin{enumerate}[i')]
\begin{minipage}{0.5\linewidth}
\item $h_2 \circ (\text{id}_2, i_2 \circ (i_1)) = \text{id}_2$
\item $h_2 \circ (i_2 \circ (i_1), \text{id}_2) = \text{id}_2$
\item $v_2 \circ (\text{id}_2,i_2) = \text{id}_2$
\item $v_2 \circ (i_2, \text{id}_2) = \text{id}_2$
\end{minipage}
\begin{minipage}{0.5\linewidth}
\item $h_2 \circ (\text{id}_2, h_2) = h_2 \circ (h_2, \text{id}_2)$
\item $v_2 \circ (\text{id}_2, v_2) = v_2 \circ (v_2, \text{id}_2)$
\item $h_2 \circ (i_2,i_2) = i_2 \circ (h_1)$;
\item[\vspace{\fill}]
\end{minipage}
\end{enumerate}

\item 3-cells consisting of the operadic composites of 3-cells $i_3$, $h_3$, $v_3$, $c_3$, $s_3$, and $s'_3$ whose images under the underlying collection map are,
\begin{center}
\resizebox{0.67\textwidth}{!}{
\begin{tikzpicture}[node distance=2cm, auto]

\node (A) {$\cdot$};
\node (B) [node distance=3.6cm, right of=A] {$\cdot$};
\node (a) [node distance=1.8cm, right of=A] {};
\node () [node distance=0.15cm, above of=a] {$i_3$};
\node () [node distance=0.15cm, below of=a] {$\Rrightarrow$};

\node (c) [node distance=1.5cm, right of=A] {};
\node (e) [node distance=0.7cm, above of=c] {};
\node (f) [node distance=0.7cm, below of=c] {};
\node (d) [node distance=2.1cm, right of=A] {};
\node (g) [node distance=0.7cm, above of=d] {};
\node (h) [node distance=0.7cm, below of=d] {};

\draw[->, bend left=45] (A) to node {$\text{id}_1$} (B);
\draw[->, bend right=45] (A) to node [swap] {$\text{id}_1$} (B);
\draw[->, bend right=25] (e) to node [swap] {$\text{id}_2$} (f);
\draw[->, bend left=25] (g) to node {$\text{id}_2$} (h);

\node (P) [node distance=3cm, right of =B] {$\cdot$};
\node (Q) [node distance=2.8cm, right of =P] {$\cdot$};
\node () [node distance=1.4cm, right of=P] {$\Downarrow$};

\draw[->, bend left=40] (P) to node {} (Q);
\draw[->, bend right=40] (P) to node {} (Q);

\node (x) [node distance=0.5cm, right of=B] {};
\node (y) [node distance=0.5cm, left of=P] {};
\draw[|->, dashed] (x) to node {} (y);

\node (A') [node distance=2.9cm, below of=A] {$\cdot$};
\node (B') [node distance=3.6cm, right of=A'] {$\cdot$};
\node (a') [node distance=1.8cm, right of=A'] {};
\node () [node distance=0.15cm, above of=a'] {$h_3$};
\node () [node distance=0.15cm, below of=a'] {$\Rrightarrow$};

\node (c') [node distance=1.5cm, right of=A'] {};
\node (e') [node distance=0.7cm, above of=c'] {};
\node (f') [node distance=0.7cm, below of=c'] {};
\node (d') [node distance=2.1cm, right of=A'] {};
\node (g') [node distance=0.7cm, above of=d'] {};
\node (h') [node distance=0.7cm, below of=d'] {};

\draw[->, bend left=45] (A') to node {$h_1$} (B');
\draw[->, bend right=45] (A') to node [swap] {$h_1$} (B');
\draw[->, bend right=25] (e') to node [swap] {$h_2$} (f');
\draw[->, bend left=25] (g') to node {$h_2$} (h');

\node (P') [node distance=3cm, right of=B'] {$\cdot$};
\node (Q') [node distance=3cm, right of=P'] {$\cdot$};
\node (R') [node distance=3cm, right of=Q'] {$\cdot$};
\node () [node distance=1.5cm, right of=P'] {$\Rrightarrow$};
\node () [node distance=1.5cm, right of=Q'] {$\Rrightarrow$};

\node (u') [node distance=1.2cm, right of=P'] {};
\node (i') [node distance=0.6cm, above of=u'] {};
\node (j') [node distance=0.6cm, below of=u'] {};
\node (v') [node distance=1.8cm, right of=P'] {};
\node (n') [node distance=0.6cm, above of=v'] {};
\node (m') [node distance=0.6cm, below of=v'] {};

\node (w') [node distance=1.2cm, right of=Q'] {};
\node (k') [node distance=0.6cm, above of=w'] {};
\node (l') [node distance=0.6cm, below of=w'] {};
\node (z') [node distance=1.8cm, right of=Q'] {};
\node (s') [node distance=0.6cm, above of=z'] {};
\node (t') [node distance=0.6cm, below of=z'] {};

\draw[->, bend left=45] (P') to node {} (Q');
\draw[->, bend right=45] (P') to node [swap] {} (Q');
\draw[->, bend right=30] (i') to node [swap] {} (j');
\draw[->, bend left=30] (n') to node {} (m');

\draw[->, bend left=45] (Q') to node {} (R');
\draw[->, bend right=45] (Q') to node [swap] {} (R');
\draw[->, bend right=30] (k') to node [swap] {} (l');
\draw[->, bend left=30] (s') to node {} (t');

\node (x') [node distance=0.5cm, right of=B'] {};
\node (y') [node distance=0.5cm, left of=P'] {};
\draw[|->, dashed] (x') to node {} (y');

\node (A'') [node distance=2.9cm, below of=A'] {$\cdot$};
\node (B'') [node distance=3.6cm, right of=A''] {$\cdot$};
\node (a'') [node distance=1.8cm, right of=A''] {};
\node () [node distance=0.15cm, above of=a''] {$v_3$};
\node () [node distance=0.15cm, below of=a''] {$\Rrightarrow$};

\node (c'') [node distance=1.5cm, right of=A''] {};
\node (e'') [node distance=0.7cm, above of=c''] {};
\node (f'') [node distance=0.7cm, below of=c''] {};
\node (d'') [node distance=2.1cm, right of=A''] {};
\node (g'') [node distance=0.7cm, above of=d''] {};
\node (h'') [node distance=0.7cm, below of=d''] {};

\draw[->, bend left=45] (A'') to node {$\text{id}_1$} (B'');
\draw[->, bend right=45] (A'') to node [swap] {$\text{id}_1$} (B'');
\draw[->, bend right=25] (e'') to node [swap] {$v_2$} (f'');
\draw[->, bend left=25] (g'') to node {$v_2$} (h'');

\node (P'') [node distance=3cm, right of=B''] {$\cdot$};
\node (Q'') [node distance=3cm, right of=P''] {$\cdot$};

\draw[->, bend left=65] (P'') to node {} (Q'');
\draw[->] (P'') to node {} (Q'');
\draw[->, bend right=65] (P'') to node [swap] {} (Q'');

\node (u'') [node distance=1.5cm, right of=P''] {};
\node (i'') [node distance=0.45cm, above of=u''] {$\Rrightarrow$};
\node (j'') [node distance=0.45cm, below of=u''] {$\Rrightarrow$};

\node (n'') [node distance=0.4cm, left of=i'', above of=i''] {};
\node (m'') [node distance=0.4cm, left of=i'', below of=i''] {};
\node (k'') [node distance=0.4cm, right of=i'', above of=i''] {};
\node (l'') [node distance=0.4cm, right of=i'', below of=i''] {};
\node (w'') [node distance=0.4cm, left of=j'', above of=j''] {};
\node (z'') [node distance=0.4cm, left of=j'', below of=j''] {};
\node (s'') [node distance=0.4cm, right of=j'', above of=j''] {};
\node (t'') [node distance=0.4cm, right of=j'', below of=j''] {};

\draw[->, bend right=30] (n'') to node [swap] {} (m'');
\draw[->, bend left=30] (k'') to node {} (l'');
\draw[->, bend right=30] (w'') to node [swap] {} (z'');
\draw[->, bend left=30] (s'') to node {} (t'');

\node (x'') [node distance=0.5cm, right of=B''] {};
\node (y'') [node distance=0.5cm, left of=P''] {};
\draw[|->, dashed] (x'') to node {} (y'');

\node (A''') [node distance=2.9cm, below of=A''] {$\cdot$};
\node (B''') [node distance=3.6cm, right of=A'''] {$\cdot$};
\node (a''') [node distance=1.8cm, right of=A'''] {};
\node () [node distance=0.15cm, above of=a'''] {$c_3$};
\node () [node distance=0.15cm, below of=a'''] {$\Rrightarrow$};

\node (c''') [node distance=1.5cm, right of=A'''] {};
\node (e''') [node distance=0.7cm, above of=c'''] {};
\node (f''') [node distance=0.7cm, below of=c'''] {};
\node (d''') [node distance=2.1cm, right of=A'''] {};
\node (g''') [node distance=0.7cm, above of=d'''] {};
\node (h''') [node distance=0.7cm, below of=d'''] {};

\draw[->, bend left=45] (A''') to node {$\text{id}_1$} (B''');
\draw[->, bend right=45] (A''') to node [swap] {$\text{id}_1$} (B''');
\draw[->, bend right=25] (e''') to node [swap] {$\text{id}_2$} (f''');
\draw[->, bend left=25] (g''') to node {$\text{id}_2$} (h''');

\node (P''') [node distance=3cm, right of=B'''] {$\cdot$};
\node (Q''') [node distance=3cm, right of=P'''] {$\cdot$};

\node (u''') [node distance=1cm, right of=P'''] {};
\node (i''') [node distance=1.5cm, right of=P'''] {};
\node (j''') [node distance=2cm, right of=P'''] {};
\node (n''') [node distance=0.55cm, above of=u'''] {};
\node (m''') [node distance=0.55cm, below of=u'''] {};
\node (k''') [node distance=0.65cm, above of=i'''] {};
\node (l''') [node distance=0.65cm, below of=i'''] {};
\node (w''') [node distance=0.55cm, above of=j'''] {};
\node (z''') [node distance=0.55cm, below of=j'''] {};
\node () [node distance=1.15cm, right of=P'''] {$\Rrightarrow$};
\node () [node distance=1.85cm, right of=P'''] {$\Rrightarrow$};

\draw[->, bend left=45] (P''') to node {} (Q''');
\draw[->, bend right=45] (P''') to node [swap] {} (Q''');
\draw[->, bend right=35] (n''') to node [swap] {} (m''');
\draw[->] (k''') to node [swap] {} (l''');
\draw[->, bend left=35] (w''') to node {} (z''');

\node (x''') [node distance=0.5cm, right of=B'''] {};
\node (y''') [node distance=0.5cm, left of=P'''] {};
\draw[|->, dashed] (x''') to node {} (y''');

\end{tikzpicture}
}
\end{center}

\begin{center}
\resizebox{0.67\textwidth}{!}{
\begin{tikzpicture}[node distance=2cm, auto]

\node (A*) {$\cdot$};
\node (B*) [node distance=3.6cm, right of=A*] {$\cdot$};
\node (a*) [node distance=1.8cm, right of=A*] {};
\node () [node distance=0.15cm, above of=a*] {$s_3$};
\node () [node distance=0.15cm, below of=a*] {$\Rrightarrow$};

\node (c*) [node distance=1.5cm, right of=A*] {};
\node (e*) [node distance=0.7cm, above of=c*] {};
\node (f*) [node distance=0.7cm, below of=c*] {};
\node (d*) [node distance=2.1cm, right of=A*] {};
\node (g*) [node distance=0.7cm, above of=d*] {};
\node (h*) [node distance=0.7cm, below of=d*] {};

\draw[->, bend left=45] (A*) to node {$h_1$} (B*);
\draw[->, bend right=45] (A*) to node [swap] {$h_1$} (B*);
\draw[->, bend right=25] (e*) to node [swap] {$x$} (f*);
\draw[->, bend left=25] (g*) to node {$y$} (h*);

\node (P*) [node distance=3cm, right of=B*] {$\cdot$};
\node (Q*) [node distance=2.8cm, right of=P*] {$\cdot$};
\node (R*) [node distance=2.8cm, right of=Q*] {$\cdot$};

\node (i*) [node distance=1.4cm, right of=P*] {};
\node () [node distance=0.45cm, above of=i*] {$\Downarrow$};
\node () [node distance=0.45cm, below of=i*] {$\Downarrow$};
\node (j*) [node distance=1.4cm, right of=Q*] {};
\node () [node distance=0.45cm, above of=j*] {$\Downarrow$};
\node () [node distance=0.45cm, below of=j*] {$\Downarrow$};

\draw[->, bend left=65] (P*) to node {} (Q*);
\draw[->] (P*) to node {} (Q*);
\draw[->, bend right=65] (P*) to node [swap] {} (Q*);
\draw[->, bend left=65] (Q*) to node {} (R*);
\draw[->] (Q*) to node {} (R*);
\draw[->, bend right=65] (Q*) to node [swap] {} (R*);

\node (x*) [node distance=0.5cm, right of=B*] {};
\node (y*) [node distance=0.5cm, left of=P*] {};
\draw[|->, dashed] (x*) to node {} (y*);

\node (A**) [node distance=3cm, below of=A*] {$\cdot$};
\node (B**) [node distance=3.6cm, right of=A**] {$\cdot$};
\node (a**) [node distance=1.8cm, right of=A**] {};
\node () [node distance=0.15cm, above of=a**] {$s'_3$};
\node () [node distance=0.15cm, below of=a**] {$\Rrightarrow$};

\node (c**) [node distance=1.5cm, right of=A**] {};
\node (e**) [node distance=0.7cm, above of=c**] {};
\node (f**) [node distance=0.7cm, below of=c**] {};
\node (d**) [node distance=2.1cm, right of=A**] {};
\node (g**) [node distance=0.7cm, above of=d**] {};
\node (h**) [node distance=0.7cm, below of=d**] {};

\draw[->, bend left=45] (A**) to node {$h_1$} (B**);
\draw[->, bend right=45] (A**) to node [swap] {$h_1$} (B**);
\draw[->, bend right=25] (e**) to node [swap] {$y$} (f**);
\draw[->, bend left=25] (g**) to node {$x$} (h**);

\node (P**) [node distance=3cm, right of=B**] {$\cdot$};
\node (Q**) [node distance=2.8cm, right of=P**] {$\cdot$};
\node (R**) [node distance=2.8cm, right of=Q**] {$\cdot$};

\node (i**) [node distance=1.4cm, right of=P**] {};
\node () [node distance=0.45cm, above of=i**] {$\Downarrow$};
\node () [node distance=0.45cm, below of=i**] {$\Downarrow$};
\node (j**) [node distance=1.4cm, right of=Q**] {};
\node () [node distance=0.45cm, above of=j**] {$\Downarrow$};
\node () [node distance=0.45cm, below of=j**] {$\Downarrow$};

\draw[->, bend left=65] (P**) to node {} (Q**);
\draw[->] (P**) to node {} (Q**);
\draw[->, bend right=65] (P**) to node [swap] {} (Q**);
\draw[->, bend left=65] (Q**) to node {} (R**);
\draw[->] (Q**) to node {} (R**);
\draw[->, bend right=65] (Q**) to node [swap] {} (R**);

\node (x**) [node distance=0.5cm, right of=B**] {};
\node (y**) [node distance=0.5cm, left of=P**] {};
\draw[|->, dashed] (x**) to node {} (y**);

\end{tikzpicture}
}
\end{center}
where $x=h_2 \circ (v_2,v_2)$ and $y=v_2 \circ (h_2,h_2)$, subject to the equalities below.
\vspace{1mm}
\begin{enumerate}[i'')]
\begin{minipage}{0.5\linewidth}
\item $h_3 \circ (\text{id}_3, i_3 \circ (i_2 \circ (i_1))) = \text{id}_3$
\item $h_3 \circ ( i_3 \circ (i_2 \circ (i_1)), \text{id}_3) = \text{id}_3$
\item $v_3 \circ (\text{id}_3, i_3 \circ (i_2)) = \text{id}_3$
\item $v_3 \circ (i_3 \circ (i_2), \text{id}_3) = \text{id}_3$
\item $c_3 \circ (\text{id}_3, i_3) = \text{id}_3$
\item $c_3 \circ (i_3, \text{id}_3) = \text{id}_3$
\end{minipage}
\begin{minipage}{0.5\linewidth}
\item $h_3 \circ (\text{id}_3, h_3) = h_3 \circ (h_3,\text{id}_3)$
\item $v_3 \circ (\text{id}_3, v_3) = v_3 \circ (v_3, \text{id}_3)$
\item $c_3 \circ (\text{id}_3, c_3) = c_3 \circ (c_3, \text{id}_3)$
\item $h_3 \circ (i_3, i_3) = i_3 \circ (h_2)$ 
\item $v_3 \circ (i_3, i_3) = i_3 \circ (v_2)$ 
\item[\hspace{\fill}]
\end{minipage}
\end{enumerate}

\end{list}

An algebra for $\bs{H_4}$ on a 4-globular set $\bs{A}$ is a 4-category with weak interchange laws with underlying 4-globular set $\bs{A}$. Note that in dimensions $\leqslant 2$, our presentation for $\bs{H_4}$ is the same as the presentation for the 2-globular operad $\bs{T_3}$ for strict 3-categories given in Example \ref{pres for T_3}, with the exception of a single relation; $\bs{H_4}$ does not have the 2-cell relation yielding a strict interchange law. Thus, we have 3-cell generators $s_3$ and $s'_3$ providing interchange coherence 3-cells and their weak inverses, respectively. The 3-cell relations yield unit axioms for composition of 3-cells along matching 0, 1, and 2-cell boundaries, three associativity axioms, and axioms stating the composite of two identity 3-cells along a 0 or 1-cell is another identity 3-cell.
\end{definition}

\begin{example}
Since the 1-cells of $\bs{H_4}$ are the same as the 1-cells of $\bs{T_n}$, the first slices of these operads coincide, so the first slice of $\bs{H_4}$ is the plain operad for monoids. Using the presentation above, we see that the second slice is the symmetric operad $H_2$ consisting of the operadic composites of elements $i_2 \in H_2[0]$ and $h_2, v_2 \in H_2[2]$ subject to the equalities below.
\vspace{1mm}
\begin{enumerate}[i')]
\begin{minipage}{0.4\linewidth}
\item $h_2 \circ (\text{id}, i_2 ) = \text{id}$
\item $h_2 \circ (i_2, \text{id}) = \text{id}$
\item $v_2 \circ (\text{id}, i_2) = \text{id}$
\item $v_2 \circ (i_2, \text{id}) = \text{id}$
\end{minipage}
\begin{minipage}{0.4\linewidth}
\item $h_2 \circ (\text{id}, h_2) = h_2 \circ (h_2, \text{id})$
\item $v_2 \circ (\text{id}, v_2) = v_2 \circ (v_2, \text{id})$
\item $h_2 \circ (i_2,i_2) = i_2$
\end{minipage}
\end{enumerate}
Note that equality vii') follows from both i') and ii'), so comparing with Example \ref{symm op for double monoids}, we see that the second slice of $\bs{H_4}$ is the plain operad for double monoids with shared unit.

The third slice of $\bs{H_4}$ is the symmetric operad $H_3$ consisting of the operadic composites of elements $i_3, s_3, s'_3 \in H_3[0]$ and $h_3, v_3, c_3 \in H_3[2]$ subject to the equalities below.
\vspace{1mm}
\begin{enumerate}[i'')]
\begin{minipage}{0.5\linewidth}
\item $h_3 \circ (\text{id}, i_3) = \text{id}$
\item $h_3 \circ (i_3, \text{id}) = \text{id}$
\item $v_3 \circ (\text{id}, i_3) = \text{id}$
\item $v_3 \circ (i_3, \text{id}) = \text{id}$
\item $c_3 \circ (\text{id}, i_3) = \text{id}$
\item $c_3 \circ (i_3, \text{id}) = \text{id}$
\end{minipage}
\begin{minipage}{0.5\linewidth}
\item $h_3 \circ (\text{id}, h_3) = h_3 \circ (h_3,\text{id})$
\item $v_3 \circ (\text{id}, v_3) = v_3 \circ (v_3, \text{id})$
\item $c_3 \circ (\text{id}, c_3) = c_3 \circ (c_3, \text{id})$
\item $h_3 \circ (i_3, i_3) = i_3$ 
\item $v_3 \circ (i_3, i_3) = i_3$ 
\item[\hspace{\fill}]
\end{minipage}
\end{enumerate}
Equations x'') and xi'') follow from the first four equations, so we may disregard them from our list. The third slice of $H_4$ is then the plain operad for two-pointed triple monoids with shared unit. The generators $h_3$, $v_3$ and $c_3$ each provide a binary operation, and $i_3$ provides the shared unit. The points are given by $s_3$ and $s'_3$.
\end{example}

Observe that $n$-categories with weak interchange laws are the same as $(n+1)$-categories with weak interchange laws when truncated to $n-1$ dimensions. The corresponding globular operads $\bs{H_n}$ and $\bs{H_{n+1}}$ are therefore equal when truncated to any dimension $<n$, so the $k^{th}$ slices of these operads coincide for all $k<n$. 

\begin{theorem}\label{slices_for_weak_interchange} For $k<n$, the $k^{th}$ slice of the $n$-globular operad $\bs{H_n}$ for $n$-categories with weak interchange laws is the plain operad $H_k$ for (many-pointed) $k$-tuple monoids with shared unit.
\end{theorem}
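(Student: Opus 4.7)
The plan is to generalise the presentation of $\bs{H_4}$ in Definition \ref{H_4} to an arbitrary $n$ and then read off the $k^{th}$ slice directly from its $k$-cell generators and relations. First I would construct a presentation for $\bs{H_n}$ in which, for each $k<n$, the set $J_k$ of $k$-cell generators consists of: an identity generator $i_k$, a binary composition generator $b_{kl}$ for each $0\leqslant l < k$ (composition of $k$-cells along an $l$-boundary), and for each pair $l \neq m$ with $l,m<k$ a pair of interchange coherence generators $s_{klm}, s'_{klm}$ (playing the role of $s_3,s'_3$ in $\bs{H_4}$) whose source and target are the two binary composites involving $b_{k-1,l}$ and $b_{k-1,m}$. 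Because $\bs{H_n}$ is asserted to be contractible, no generators or relations are needed in the top dimension. The $k$-cell relations are precisely those giving unit and associativity axioms for each $b_{kl}$, unit axioms for each $s_{klm}, s'_{klm}$, and the two compatibility equations $b_{kl}\circ(i_k,i_k) = i_k\circ(b_{k-1,l})$ saying horizontal composition of identities is an identity; the interchange laws themselves are now \emph{not} present as $k$-cell equations for any $k<n$.

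Next I would apply the definition of slices to this presentation. By Theorem \ref{slices don't depend on presentations} the choice of presentation is irrelevant, so the $k^{th}$ slice $S_k$ is presented by the symmetric operad generated by $J_k$ modulo the images $\tilde e_k, \tilde q_k$ of the $k$-cell relations. Inspecting the relations above, every equation between $k$-cells in $\bs{J_k}$ that arises induces the standard order on both sides: no coherence $k$-cell appears that would force a non-trivial permutation $t_x$ in Definition \ref{def of t_x}. Therefore each $\tilde e_k$ and $\tilde q_k$ has image in $\{0\}\times J_k^{\infty}[n]\subseteq S_n\times J_k^{\infty}[n]$, so by Lemma \ref{prop - presentations for plain} the slice $S_k$ is the symmetrisation of a plain operad $H_k$.

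Reading off the surviving equations gives the plain operad presentation of $H_k$: generators $i_k \in H_k[0]$, $b_{k0},\dots,b_{k,k-1} \in H_k[2]$, and points $s_{klm}, s'_{klm} \in H_k[0]$ for each pair $l\neq m$, subject to left and right unit laws $b_{kl}\circ(\mathrm{id},i_k) = \mathrm{id} = b_{kl}\circ(i_k,\mathrm{id})$, associativity $b_{kl}\circ(\mathrm{id},b_{kl}) = b_{kl}\circ(b_{kl},\mathrm{id})$, and the redundant $b_{kl}\circ(i_k,i_k)=i_k$ (which follows from the unit laws). Each $b_{kl}$ therefore exhibits a monoid structure sharing the unit $i_k$, so $H_k$ is the plain operad for $k$-tuple monoids with shared unit, with additional distinguished points $s_{klm}, s'_{klm}$. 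This proves the theorem.

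The main obstacle I anticipate is bookkeeping: verifying that no $k$-cell equation in the general presentation requires a non-trivial symmetric group element $t_x$. This amounts to checking that the source and target of every $k$-cell relation, when expressed in the free $k$-free operad $\bs{J_k}$ as a composite $Q\circ(\Lambda_1,\dots,\Lambda_m)$ in the canonical form of Remark \ref{The k-cells of J_k}, induce the same (standard) order on the underlying pasting diagram. The possibly tricky cases are the unit relations for $s_{klm}, s'_{klm}$ and the $b_{kl}\circ(i_k,i_k)=i_k$ equations, where one must confirm that the degenerate pieces do not reshuffle the surviving $k$-cells; Lemma \ref{arity in compite cells in a GlobOp} and Proposition \ref{non-standard ordering} together handle this uniformly, but the verification must be done case by case in the recursive construction of the presentation.
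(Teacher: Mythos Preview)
Your approach is essentially the same as the paper's: generalise the presentation of $\bs{H_4}$ to $\bs{H_n}$, read off the $k$-cell generators and relations, observe that the interchange coherence generators become arity-$0$ points in the slice, note that the $b_{kl}\circ(i_k,i_k)=i_k\circ(b_{k-1,l})$ relations become redundant, and then invoke Lemma \ref{prop - presentations for plain} to conclude plainness. One small inaccuracy: you list ``unit axioms for each $s_{klm}, s'_{klm}$'' among the $k$-cell relations, but no such relations appear in the paper's presentation (compare Definition \ref{H_4}, where $s_3,s'_3$ participate in no $3$-cell equation), and it is not clear what a unit axiom for an arity-$0$ coherence cell would mean; this does not affect the outcome, since any such relation would involve only arity-$0$ generators and hence impose no permutation, but you should drop it from your generalised presentation.
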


\begin{proof}
The presentation for $\bs{H_4}$ given in Definition \ref{H_4} can be generalised to construct a presentation for $\bs{H_n}$ wherein for all $k<n$, the set $J_k$ of $k$-cell generators consists of an element for each kind of coherence $k$-cell related to interchange, an element $i_k$ providing the identity $k$-cells, and an element $b_{kl}$ for each $0 \leqslant l < k$ providing an operation composing a pair of $k$-cells along an $l$-cell boundary. The set $R_k$ of $k$-cell relations are those yielding unit and associativity axioms, as well as those axioms stating that the composite of two identity $k$-cells along an $l$-dimensional boundary is another identity cell. The later of these kinds of relations become redundant when taking the slice, and may therefore by disregarded. 
The $k^{th}$ slice of $\bs{H_n}$ is then the symmetric operad $H_k$ consisting of the free operadic composites of specified elements in $H_k[0]$, including an element $i_k \in H_k[0]$, and elements $b_{k1},...,b_{kk-1} \in H_k[2]$, subject to the following equalities for all $0 \leqslant l < k$,
\vspace{1mm}
\begin{enumerate}[i)]
\begin{minipage}{0.8\linewidth}
\item $b_{kl} \circ (\text{id}, \, i_k) = \text{id}$
\item $b_{kl} \circ (i_k, \, \text{id}) = \text{id}$
\item $b_{kl} \circ (\text{id}, \, b_{kl}) = b_{kl} \circ (b_{kl}, \, \text{id})$.
\end{minipage}
\end{enumerate}
Thus, $H_k$ is the operad for many-pointed $k$-tuple monoids with shared unit, and Lemma \ref{prop - presentations for plain} tells us that this is a plain operad.
\end{proof}

As a final example, we look at the slices of the $n$-globular operad for $n$-categories with weak units in dimensions $< n-1$. 

\begin{definition} \label{E_3}
The 3-globular operad $\bs{E_3}$ for 3-categories with weak units in low dimensions is the \textit{contractible} 3-globular operad with

\begin{list}{$\bullet$}{}

\item a single 0-cell, the identity $\text{id}_0$; 

\item 1-cells consisting of the operadic composites of 1-cells $i_1$ and $h_1$ whose images under the underlying collection map are as in the presentation for $\bs{W_3}$, subject to the equality 
\begin{enumerate}[i)]
\item $h_1 \circ (\text{id}_1, h_1) = h_1 \circ (h_1, \text{id}_1)$; and
\end{enumerate}

\item 2-cells consisting of the operadic composites of 2-cells $i_2$, $h_2$, $v_2$, $l_2$, $l'_2$, $r_2$ and $r'_2$ whose images under the underlying collection map are as in the presentation for $\bs{W_3}$ (see Example \ref{W_3}), subject to the following equalities
\vspace{1mm}
\begin{enumerate}[i')]
\begin{minipage}{0.3\linewidth}
\item $v_2 \circ (\text{id}_2,i_2) = \text{id}_2$
\item $v_2 \circ (i_2, \text{id}_2) = \text{id}_2$
\end{minipage}
\begin{minipage}{0.4\linewidth}
\item $h_2 \circ (\text{id}_2, h_2) = h_2 \circ (h_2, \text{id}_2)$
\item $v_2 \circ (\text{id}_2, v_2) = v_2 \circ (v_2, \text{id}_2)$
\end{minipage}
\begin{minipage}{0.4\linewidth}
\item $h_2 \circ (v_2,v_2) = v_2 \circ (h_2,h_2)$
\item $h_2 \circ (i_2,i_2) = i_2 \circ (h_1)$
\end{minipage}
\end{enumerate}
\end{list}

An algebra for $\bs{E_4}$ on a 3-globular set $\bs{A}$ is a 3-category with weak 1-cell identities whose underlying 3-globular set is $\bs{A}$. The single 1-cell relation yields an associativity axiom for 1-cell composition. Thus, the 2-cell generators $a_2$ and $a_2'$ for associativity coherence 2-cells that appear in the presentation for $\bs{W_3}$ do not appear here, but every other 2-cell generator does. The 2-cell relations yield unit axioms for \textit{vertical} composition of 2-cells, associativity axioms for both horizontal and vertical composition of 2-cells, respectively, an interchange law, and an axiom stating that the horizontal composite of two 2-cell identities is an identity 2-cell.
\end{definition}

Recall that a semigroup is a set equipped with an associative binary operation.

\begin{example} Using the presentation above, we see that the first slice of $\bs{E_3}$ is the symmetric operad $E_1$ consisting of the operadic composites of elements $i_1 \in E_1[0]$ and $h_1 \in E_1[2]$ subject to the equality $h_1 \circ (\text{id}, h_1) = h_1 \circ (h_1, \text{id})$. Thus, the first slice is the plain operad for pointed semigroups.

The second slice of $\bs{E_3}$ is the symmetric operad $\tilde{E}_2$ consisting of the operadic composites of elements $i_2$, $l_2$, $l'_2$, $r_2$, $r'_2 \in \tilde{E}_2[0]$ and $h_2, v_2 \in \tilde{E}_2[2]$ subject to the following equalities.
\vspace{1mm}
\begin{enumerate}[i')]
\begin{minipage}{0.26\linewidth}
\item $v_2 \circ (\text{id},i_2) = \text{id}$
\item $v_2 \circ (i_2, \text{id}) = \text{id}$
\end{minipage}
\begin{minipage}{0.36\linewidth}
\item $h_2 \circ (\text{id}, h_2) = h_2 \circ (h_2, \text{id})$
\item $v_2 \circ (\text{id}, v_2) = v_2 \circ (v_2, \text{id})$
\end{minipage}
\begin{minipage}{0.4\linewidth}
\item $h_2 \circ (v_2,v_2) = (2 \ 3) \cdot (v_2 \circ (h_2,h_2))$
\item $h_2 \circ (i_2,i_2) = i_2$
\end{minipage}
\end{enumerate}
Given an algebra for $\tilde{E}_2$ on a set $X$, the generators $i_2$, $l_2$, $l'_2$, $r_2$, and $r'_2$ each specify an element of $X$, while $h_2$ and $v_2$ provide binary operations on $X$, denoted $\ast$ and $\cdot$, respectively, such that
\begin{itemize}
\item $(X, \ast)$ is a semigroup; 
\item $(X, \cdot, 1_X)$ is a monoid whose identity $1_X$ is the element specified by $i_2$;
\item $1_X \ast 1_X = 1_X$; and
\item $(w \cdot x) \ast (y \cdot z) = (w \ast y) \cdot (x \ast z)$ for all $w,x,y,z \in X$.
\end{itemize}
\end{example}

The slices for $\bs{E_3}$ have a nice diagrammatic representation. Consider the first slice - the plain operad for pointed semigroups. If we represent an element $x$ in a pointed semigroup $X$ by,
\begin{center}
\begin{tikzpicture}[node distance=1cm, auto]

\node (A) {};
\node (B) [right of=A] {$\bullet$};
\node (C) [right of=B] {};

\node () [node distance=3mm, above of=B] {$x$};

\draw[-] (A) to node {} (C);

\end{tikzpicture}
\end{center}
then the binary operation may be represented by concatenation. For example, if we denote our binary operation by $\ast$, then $x \ast y$ is represented by the diagram below.
\begin{center}
\begin{tikzpicture}[node distance=1cm, auto]

\node (A) {};
\node (B) [right of=A] {$\bullet$};
\node (C) [right of=B] {$\bullet$};
\node (D) [right of=C] {};

\node () [node distance=3mm, above of=B] {$x$};
\node () [node distance=3mm, above of=C] {$y$};

\draw[-] (A) to node {} (D);

\end{tikzpicture}
\end{center}
The fact that this binary operation is associative means we may represent composites of any length without ambiguity, so a diagram like
\begin{center}
\begin{tikzpicture}[node distance=1cm, auto]

\node (A) {};
\node (B) [right of=A] {$\bullet$};
\node (C) [right of=B] {$\bullet$};
\node (D) [right of=C] {$\bullet$};
\node (E) [right of=D] {$\bullet$};
\node (F) [right of=E] {};

\node () [node distance=3mm, above of=B] {$w$};
\node () [node distance=3mm, above of=C] {$x$};
\node () [node distance=3mm, above of=D] {$y$};
\node () [node distance=3mm, above of=E] {$z$};

\draw[-] (A) to node {} (F);

\end{tikzpicture}
\end{center}
represent a unique element of $X$, since by the associativity axiom, any way of composing the ordered list $(w,x,y,z)$ of elements in $X$ is equal. To keep track of the point $\pentagram \in X$, we use the notation 
\begin{center}
\begin{tikzpicture}[node distance=1cm, auto]

\node[circle,draw, scale=0.4] (X) at (0,0) {};
\draw(-1,0) to (X);
\draw(X) to (1,0);

\node () [node distance=3mm, above of=X] {};

\end{tikzpicture}
\end{center}
so the element $x \ast \pentagram \ast y$ is represented by the diagram below.

\begin{center}
\begin{tikzpicture}[node distance=1cm, auto]

\node[circle,draw, scale=0.4] (X) at (0,0) {};
\node(A) at (-1,0) {$\bullet$};
\node(B) at (1,0) {$\bullet$};
\draw(-2,0) to (X);
\draw(X) to (2,0);

\node () [node distance=3mm, above of=A] {$x$};
\node () [node distance=3mm, above of=B] {$y$};

\end{tikzpicture}
\end{center}

Next, consider the second slice of $\bs{E_3}$. 
If we represent an element $x$ in an algebra for $\tilde{E}_2$ as,
\begin{center}
\begin{tikzpicture}[node distance=1cm, auto]

\draw[dashed] (-1,-1) rectangle (1, 1);

\node (X) at (0,0) {$\bullet$};

\draw(0,-1) to (0,1);

\node () [node distance=3mm, right of=X] {$x$};

\end{tikzpicture}
\end{center}
then the binary operations $\ast$ and $\cdot$ can be represented by horizontal and vertical concatenation, respectively. For example, the elements $x \ast y$ and $x \cdot y$ are represented by the diagrams below.
\begin{center}
\begin{tikzpicture}[node distance=1cm, auto]

\draw[dashed] (-1,-1) rectangle (2.5, 1);
\node (X) at (0,0) {$\bullet$};
\draw(0,-1) to (0,1);
\node () [node distance=3mm, right of=X] {$x$};

\node (Y) at (1.5,0) {$\bullet$};
\draw(1.5,-1) to (1.5,1);
\node () [node distance=3mm, right of=Y] {$y$};

\draw[dashed] (5,-1.75) rectangle (7, 1.75);
\node (X') at (6,-0.75) {$\bullet$};
\draw(6,-1.75) to (6,0);
\node () [node distance=3mm, right of=X'] {$x$};

\node (Y') at (6,0.75) {$\bullet$};
\draw(6,0) to (6,1.75);
\node () [node distance=3mm, right of=Y'] {$y$};

\end{tikzpicture}
\end{center}
The fact that the latter operation has an identity $1_X$ can be incorporated into our diagrams by representing $1_X$ simply as 
\begin{center}
\begin{tikzpicture}[node distance=1cm, auto]

\draw[dashed] (-1,-1) rectangle (1, 1);

\draw(0,-1) to (0,1);

\node () [node distance=3mm, right of=X] {};

\end{tikzpicture}
\end{center}
so the equations $x \cdot 1_X = x = 1_X \cdot x$ are expressed diagrammatically as: 
\begin{center}
\begin{tikzpicture}[node distance=1cm, auto]

\draw[dashed] (-1,-0.5) rectangle (1, 2.5);
\draw(0,-0.5) to (0,1);

\node (Y) at (0,1.5) {$\bullet$};
\draw(0,1) to (0,2.5);
\node () [node distance=3mm, right of=Y] {$x$};

\node () at (2,1) {$=$};

\draw[dashed] (3,0) rectangle (5, 2);
\node (X') at (4,1) {$\bullet$};
\draw(4,0) to (4,2);
\node () [node distance=3mm, right of=X'] {$x$};

\node () at (6,1) {$=$};

\draw[dashed] (7,-0.5) rectangle (9, 2.5);
\node (X'') at (8,0.5) {$\bullet$};
\draw(8,-0.5) to (8,2.5);
\node () [node distance=3mm, right of=X''] {$x$};

\end{tikzpicture}
\end{center}
The axiom $1_X \ast 1_X = 1_X$ can be represented as the following equality of diagrams.
\begin{center}
\begin{tikzpicture}[node distance=1cm, auto]

\draw[dashed] (-1,-1) rectangle (1, 1);

\draw(0,-1) to (0,1);

\node () [node distance=3mm, right of=X] {};

\node () at (2,0) {$=$};

\draw[dashed] (3,-1) rectangle (6.5, 1);

\draw(4,-1) to (4,1);

\draw(5.5,-1) to (5.5,1);

\node () [node distance=3mm, right of=X] {};

\end{tikzpicture}
\end{center}
As a result, diagrams like
\begin{center}
\begin{tikzpicture}[node distance=1cm, auto]

\draw[dashed] (0,0) rectangle (3.5, 3);
\draw(1,0) to (1,3);
\draw(2.5,0) to (2.5,3);

\node (X) at (1,1) {};
\node (A) at (1,2) {$\bullet$};

\node (Y) at (2.5,1) {};
\node (B) at (2.5,2) {$\bullet$};

\node () [node distance=3mm, right of=A] {$x$};
\node () [node distance=3mm, right of=B] {$y$};

\end{tikzpicture}
\end{center}
represent a unique element of $X$, since every way of reading this diagram produces the same result, for example: $x \ast y = (x \ast y) \cdot 1_X = (x \ast y) \cdot (1_X \ast 1_X) = (x \cdot 1_X) \ast (y \cdot 1_X)$.

Furthermore, the fact that the binary operations are both associative, and satisfy $(w \cdot x) \ast (y \cdot z) = (w \ast y) \cdot (x \ast z)$ for any $w,x,y,z$, means that \textit{any} diagram represent a unique element of $X$, for example, the diagram 
\begin{center}
\begin{tikzpicture}[node distance=1cm, auto]

\draw[dashed] (0,0) rectangle (5, 4);
\draw(1,0) to (1,4);
\draw(2.5,0) to (2.5,4);
\draw(4,0) to (4,4);

\node (X) at (1,1) {$\bullet$};
\node (A) at (1,2) {$\bullet$};
\node (U) at (1,3) {$\bullet$};

\node (Y) at (2.5,1) {$\bullet$};
\node (B) at (2.5,2) {$\bullet$};
\node (V) at (2.5,3) {$\bullet$};

\node (Z) at (4,1) {$\bullet$};
\node (C) at (4,2) {$\bullet$};
\node (W) at (4,3) {$\bullet$};

\node () [node distance=3mm, right of=A] {$y$};
\node () [node distance=3mm, right of=B] {$v$};
\node () [node distance=3mm, right of=C] {$s$};

\node () [node distance=3mm, right of=X] {$z$};
\node () [node distance=3mm, right of=Y] {$w$};
\node () [node distance=3mm, right of=Z] {$t$};

\node () [node distance=3mm, right of=U] {$x$};
\node () [node distance=3mm, right of=V] {$u$};
\node () [node distance=3mm, right of=W] {$r$};

\end{tikzpicture}
\end{center}
represents the composite $(x \ast u \ast r) \cdot (y \ast v \ast s) \cdot (z \ast w \ast t) = \hdots = (x \cdot y \cdot z) \ast (u \cdot v \cdot w) \ast (r \cdot s \cdot t)$. The four points provided the the generators $l_2$, $l'_2$, $r_2$, and $r'_2$, can be distinguished by varying the notation used for them, for example, our four point could be represented by the diagrams below.

\begin{center}
\begin{tikzpicture}[node distance=1cm, auto]

\draw[dashed] (-1,-1) rectangle (1, 1);
\node (A) at (0,0) {$\medcircle$};
\draw(0,-1) to (0,-0.1);
\draw(0,0.1) to (0,1);

\draw[dashed] (2,-1) rectangle (4, 1);
\node (X) at (3,0) {$\meddiamond$};
\draw(3,-1) to (3,-0.1);
\draw(3,0.1) to (3,1);

\draw[dashed] (5,-1) rectangle (7, 1);
\node (X) at (6,0) {$\square$};
\draw(6,-1) to (6,-0.1);
\draw(6,0.1) to (6,1);

\draw[dashed] (8,-1) rectangle (10, 1);
\node (X) at (9,0) {$\triangleleft$};
\draw(9,-1) to (9,-0.06);
\draw(9,0.06) to (9,1);

\end{tikzpicture}
\end{center}

The proof of the following theorem is similar to the proofs of Theorems \ref{slices for W_n} and \ref{slices_for_weak_interchange}, and hence omitted.

\begin{theorem}\label{slices_for_weak_units}
For $k<n$, the $k^{th}$ slice of the $n$-globular operad $\bs{E_n}$ for $n$-categories with weak units in low dimensions is the symmetric operad for many-pointed $k$-tuple semigroups, of one which is a monoid if $k=n-1$, such that any two of the semigroup operations $\ast$ and $\cdot$ satisfy $(w \cdot x) \ast (y \cdot z) = (w \ast y) \cdot (x \ast z)$, and if we denote the identity of the monoid by $1$, then $1 \ast 1 = 1$ for any semigroup operation $\ast$.
\end{theorem}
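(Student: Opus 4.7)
The plan is to construct a presentation for $\bs{E_n}$ that generalises the one given for $\bs{E_3}$ in Definition \ref{E_3}, and then read off the $k^{th}$ slice directly from the $k$-cell generators and $k$-cell relations. Specifically, for each $k<n$ the set $J_k$ of $k$-cell generators will consist of (i) an element $i_k$ providing the identity $k$-cells, (ii) an element $b_{kl}$ for each $0 \leqslant l < k$ providing binary composition of $k$-cells along an $l$-cell boundary, and (iii) a family of coherence cells analogous to $l_2, l'_2, r_2, r'_2$ mediating the weak unit laws in every dimension $<n-1$. In dimension $k=n-1$ the unit laws must become \emph{strict}, so no coherence generators for units appear at this level; this ensures, together with contractibility, that $\bs{E_n}$ has the desired shape. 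The set $R_k$ of $k$-cell relations will consist of the associativity relations for each $b_{kl}$, the interchange relations $b_{kl} \circ (b_{ki}, b_{ki}) = b_{ki} \circ (b_{kl}, b_{kl})$ for $i \neq l$, the relations of the form $b_{kl} \circ (i_k, i_k) = i_k \circ (b_{(k-1)l})$ stating that composites of identities are identities, and, only in dimension $k=n-1$, the strict unit relations for each $b_{(n-1)l}$.

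Next I would apply the definition of slices to this presentation. By Lemma \ref{prop - presentations for plain}, the slice ends up with non-trivial symmetric-group elements only at those relations whose induced orderings fail to be standard; inspection shows these come exclusively from the interchange relations, contributing the permutation yielding $(w \cdot x) \ast (y \cdot z) = (w \ast y) \cdot (x \ast z)$. The other relations descend to: associativity of each binary operation $b_{kl}$, the identity $1 \ast 1 = 1$ for every binary operation $\ast$ coming from $b_{kl}$, and, in the case $k=n-1$ only, the unit axioms making the chosen operation (and hence, after translation, the structure whose unit is $i_{n-1}$) into a monoid. The generators $i_k$ (for $k<n-1$) and the weak-unit coherence cells all descend to nullary operations on the slice, giving the many-pointed structure claimed in the statement.

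The argument that these generators and relations actually present the stated operad then follows the template of Theorem \ref{slices_for_weak_interchange}: one checks that the relations iv)--vi) of Example \ref{Eck-Hil Pres} needed to force commutativity do not all appear here, precisely because in $\bs{E_n}$ the units are weak below dimension $n-1$, so there is no Eckmann--Hilton collapse. Independence from the choice of presentation is automatic by Theorem \ref{slices don't depend on presentations}.

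The main obstacle will be verifying that a presentation of the form just described really does present $\bs{E_n}$, rather than a strictly weaker or stronger operad. The subtle point is the interface at dimension $k = n-1$: the monoid axioms for $b_{(n-1)l}$ must be imposed precisely to keep the operad contractible in the sense of Definition \ref{contractible n-glob ops}(ii), since at the top dimension parallel cells with the same image under the collection map must coincide, but this constraint must \emph{not} propagate down and accidentally strictify lower-dimensional units. Showing that this delicate balance is realised by the proposed presentation is the technically demanding step, and it is handled exactly as the analogous balance for $\bs{H_n}$ in Theorem \ref{slices_for_weak_interchange}, appealing to the detailed analysis of contractibility in \cite[\S 9.2]{Me}.
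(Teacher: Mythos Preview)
Your approach matches the paper's, which explicitly omits the proof as being analogous to Theorems \ref{slices for W_n} and \ref{slices_for_weak_interchange}. The template --- generalise the presentation of $\bs{E_3}$, then read off generators and relations for the slice --- is exactly right.

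There is, however, a genuine error in your description of the presentation. You impose, at level $k=n-1$, strict unit relations \emph{for each} $b_{(n-1)l}$. Compare with Definition \ref{E_3}: for $n=3$ and $k=2$, only $v_2=b_{21}$ carries strict unit relations (your i$'$) and ii$'$)), while $h_2=b_{20}$ does not. In general the strict unit relations at level $n-1$ are imposed only for $b_{(n-1)(n-2)}$, the composition along the immediately lower boundary; the unit for $b_{(n-1)l}$ with $l<n-2$ is $i_{n-1}\circ(i_{n-2}\circ(\cdots\circ(i_{l+1})))$, and since $i_{l+1},\ldots,i_{n-2}$ are weak, no strict unit law can hold there. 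This is not a cosmetic slip: if every $b_{(n-1)l}$ were strictly unital with shared unit $i_{n-1}$, then together with the interchange relations the Eckmann--Hilton argument (Example \ref{Eck-Hil Pres}) would force all the operations to coincide and be commutative in the $(n-1)^{\text{st}}$ slice, directly contradicting the theorem's assertion that only \emph{one} of the $k$ operations is a monoid. You seem to sense this later when you write ``the chosen operation'' in the singular and invoke the absence of Eckmann--Hilton collapse, but the presentation you actually wrote down does collapse. Fixing the quantifier --- strict unit only for $l=n-2$ --- makes the rest of your argument go through as stated.
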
 

The slices of $\bs{E_n}$ are not plain operads, so Batanin's Conjecture \ref{Conjecture} does not apply directly to higher categories with weak units. However, the slices of $\bs{E_n}$ have a diagrammatic representation analogous to the slices of $\bs{E_3}$, and we may view these diagrams and their associated graphical calculus as decribing the string diagrams for higher categories with weak units. Recall from the introduction that Batanin's conjecture came in two parts: The first part states that if the slices of an $n$-globular operad $\bs{G}$ are plain operads for all $k<n$, then the string diagrams associated to the algebras for $\bs{G}$ form a presheaf category. The second hypothesises is that if the string diagrams associated to a higher category are presheaves, then those higher categories are equivalent to fully weak ones. Thus, if we can use the slices here to show directly that our string diagrams are presheaf categories, then first part of the hypothesis can be circumvented.


\begin{thebibliography}{99}

\bibitem{A+S} 
J. Ad\'amek, J. Rosick\'y, and E. M. Vitale.
Algebraic Theories.
volume 184 of \textit{Cambridge Tracts in Mathematics}.
Cambridge University Press, Cambridge, 2011.
A Categorical Introduction to General Algebra, With a foreward by F. W. Lawvere.

\bibitem{SA}
Steve Awodey.
Category Theory.
number 52 of \textit{Oxford Logic Guides}. 
Oxford University Press, 2010.

\bibitem{baez and dolan}
John C. Baez and James Dolan.
Higher-dimensional Algebra and Topological Quantum Field Theory.
\textit{Journal of Mathematical Physics}, Vol. 36, Issue 12, 1995, pp. 6073-6105. 

\bibitem{MB}
Michael Batanin. 
Monoidal globular categories as a natural environment for the theory of weak $n$-categories.
\textit{Advances in Mathematics}, Vol. 136, Issue 1, 1998, pp. 39–103.

\bibitem{batanin} 
Michael Batanin.
Computads and slices of operads 
(preprint), 2002.
\url{https://arxiv.org/abs/math/0209035}.

\bibitem{SB+HPS} 
Stanley Burris and H .P. Sankappanavar.
A course in universal algebra.
\textit{Graduate Texts in Mathematics}, Vol. 78, Springer-Verlag, New York-Berlin, 1981.

\bibitem{AC+PJ}
Aurelio Carboni and Peter Johnstone.
Connected limits, familial representability and Artin glueing.
\textit{Mathematical Structures in Computer Science}, Vol. 5, Issue 4, 1995, pp. 441-459.

\bibitem{Crans1}
S. Crans.
Erratum: On braidings, syllapses and symmetries.
\textit{Cahiers de Topologie et G\'eom\'etrie Diff\'erentielle} Vol 41, No. 2, 2000, pp. 156.

\bibitem{Crans2}
S. Crans.
On braidings, syllapses and symmetries.
\textit{Cahiers de Topologie et G\'eom\'etrie Diff\'erentielle}, Vol 41, No. 1, 2000, pp. 2-74.

\bibitem{Dorn}
Christoph Dorn.
Associative $n$-categories.
Doctoral thesis, 2018.
\url{https://arxiv.org/pdf/1812.10586.pdf}.

\bibitem{RG} 
Richard Garner.
Homomorphisms of higher categories.
\textit{Advances in Mathematics}, Vol. 224, Issue 6, 2010, pp. 2269-2311.

\bibitem{RG1}
Rhiannon Griffiths.
Globular operads for higher categories.
Doctoral thesis, 2021. 
\url{https://etd.ohiolink.edu/apexprod/rws_etd/send_file/send?accession=case1623155240596704&disposition=inline}.

\bibitem{Me}
Rhiannon Griffiths.
Presentations for Globular Operads.
\textit{Theory and Applications of Categories}, Vol. 39, No. 10, 2023, pp. 267-321.

\bibitem{RG2}
Rhiannon Griffiths.
Computads for semi-strict higher categories.
In preparation.

\bibitem{RG3}
Rhiannon Griffiths.
String diagrams and for higher categories with weak units.
In preparation.

\bibitem{NG}
Nick Gurski.
Coherence in Three-Dimensional Category Theory. 
\textit{Cambridge Tracts in Mathematics}, Vol. 21, Cambridge University Press, Cambridge, 2013.

\bibitem{gjo} 
Nick Gurski, Niles Johnson and Ang\`elica Osorno.
The 2-dimensional stable homotopy hypothesis.
\textit{Journal of Pure and Applied Algebra}, Vol. 223, Issue 10, 2019, pp. 4348-4383.

\bibitem{HeidemannReutterVicary} 
Lukas Heidemann, David Reutter and Jamie Vicary.  
Zigzag normalisation for associative $n$-categories. 
\textit{Proceedings of the 37th Annual ACM/IEEE Symposium on Logic in Computer Science (LICS)}, 2022. 

\bibitem{jo} 
Niles Johnson and Ang\`elica Osorno. 
Modeling stable one-types.
\textit{Theory and Applications of Categories}, Vol. 26, No. 20, 2012, pp. 520-537.

\bibitem{BJ+EN} 
Bjarni J\`onsson and Evelyn Nelson.
Relatively free products in regular varieties.
\textit{Algebra Universalis}, Vol. 4, 1974, pp. 14-19.

\bibitem{AJ}
André Joyal.
Foncteurs analytiques et espèces de structures.
\textit{Combinatoire énumérative. Lecture Notes in Mathematics}, Vol. 1234, Springer, Berlin, Heidelberg. 1986.

\bibitem{AK}
Joachim Kock.
Weak identity arrows in higher categories.
\textit{International Mathematics Research Papers}, 2006, pp. 1- 54.

\bibitem{TL1} 
Tom Leinster.
General operads and multicategories, 
1998.
\url{https://arxiv.org/abs/math/9810053}.

\bibitem{TL}
Tom Leinster.
Higher Operads, Higher Categories.
\textit{London Mathematical Society Lecture Note Series}, No 298, Cambridge University Press, 2004.

\bibitem{M+Z} 
Michael Makkai and Marek Zawadowski.
The category of 3-computads is not cartesian closed.
\textit{Journal of Pure and Applied Algebra}, Vol. 212, Issue 11, 2008, pp. 2543-2546.

\bibitem{MSS}
Martin Markl, Steve Shnider and Jim Stasheff.
Operads in Algebra, Topology and Physics.
\textit{Mathematical Surveys and Monographs}, Vol. 96, American Mathematical Society, 2002.

\bibitem{PM} 
Peter May.
The geometry of iterated of loop spaces.
\textit{Lecture Notes in Mathematics}, Vol. 271, Springer-Verlag, Berlin, Heidelberg, 1972.

\bibitem{ReutterVicary} 
David Reutter and Jamie Vicary.  
High-level methods for homotopy construction in associative $n$-categories.
\textit{Proceedings of the 34th Annual ACM/IEEE Symposium for Logic in Computer Science (LICS)}, 2019. 

\bibitem{CS} 
Carlos Simpson.
Homotopy types of strict 3-groupoids.
\textit{New mathematical monographs}, Vol. 19, Cambridge University Press, Cambridge, 2013.

\bibitem{simpson} 
Carlos Simpson.
Homotopy Theory of Higher Categories: From Segal Categories to $n$-Categories and Beyond.
\textit{New mathematical monographs}, Vol. 19, Cambridge University Press, Cambridge, 2013.

\bibitem{IW} 
Ittay Weiss.
Dendroidal Sets.
Doctoral Thesis, 2007.
\url{https://dspace.library.uu.nl/bitstream/handle/1874/22859/?sequence=7}.

\end{thebibliography}
\end{document}